\documentclass[a4paper,12pt]{article}

\usepackage{mathtools, bbm, bm,array,amsmath,amsthm}
\usepackage{tikz}
\usepackage{caption}
\usepackage{environ}
\usepackage{graphicx}
\usetikzlibrary{shapes}
\usetikzlibrary{decorations.markings}
\usetikzlibrary{patterns}

\makeatletter
\newcommand\footnoteref[1]{\protected@xdef\@thefnmark{\ref{#1}}\@footnotemark}
\makeatother

\newtheorem{theorem}{Theorem}%
\newtheorem{lemma}[theorem]{Lemma}%
\newtheorem{corollary}[theorem]{Corollary}%

\let\originalleft\left
\let\originalright\right
\renewcommand{\left}{\mathopen{}\mathclose\bgroup\originalleft}
\renewcommand{\right}{\aftergroup\egroup\originalright}

\newcommand{\N}{\mathbb{N}}

\newcommand{\R}{\mathbb{R}}

\newcommand{\diff}{\mathop{}\!d}
\newcommand{\ecfup}{M^{\uparrow}}
\newcommand{\volup}{V^{\uparrow}}

\newcommand{\symbolnotsobw}{\symbolnot_{\mathcal{W}}}
\newcommand{\symbolnotsch}{\symbolnot_{\mathcal{S}}}
\newcommand{\ballsch}{\mathcal{B}^{(s)}_{\mathcal{S}}}
\newcommand{\ballweight}{\mathcal{B}^{(s,r)}_{\mathcal{W}}}
\newcommand{\tsch}{T_{\mathcal{S}}}

\newcommand{\psidosymbclass}{\Gamma^m_\rho}
\newcommand{\hyposymbclass}{\text{H}\Gamma^{m_-,m_+}_\rho}
\newcommand{\hyposymbclassinv}{\mathrm{H}\Gamma^{-m_+,-m_-}_\rho}
\newcommand{\psidoopclass}{\Psi^m_\rho}
\newcommand{\hyponegclass}{\mathrm{H}\Gamma^-}
\newcommand{\hypoposclass}{\mathrm{H}\Gamma^+}
\newcommand{\symbolnot}{\sigma}

\DeclareMathAlphabet{\pazocal}{OMS}{zplm}{m}{n}

\usepackage{pdfsync}

\usepackage{amsmath}
\usepackage{amsfonts}
\usepackage{amssymb}
\usepackage{amsthm}
\usepackage{mathrsfs}
\newtheorem*{lemma*}{Lemma}
\newtheorem*{proposition*}{Proposition}
\newtheorem*{theorem*}{Theorem}
\usepackage[normalem]{ulem}
\usepackage{extarrows}
\usepackage{xcolor}
\usepackage[margin=1cm]{caption}
\usepackage{subcaption}
\usepackage[inline]{enumitem}

\newlength{\mylabelwidth}
\newenvironment{citemize}
  {\begin{list}{}%
     {\setlength{\labelwidth}{\mylabelwidth}%
      \setlength{\leftmargin}{\dimexpr\labelwidth+\labelsep\relax}%
      }}
  {\end{list}}

\usepackage[numbers, sort, compress]{natbib}
\DeclareUnicodeCharacter{025B}{\ensuremath{\varepsilon}}

\usepackage{framed}

\usepackage[linkcolor=blue,colorlinks=true,citecolor=blue,filecolor=black]{hyperref}
\usepackage{geometry}
\usepackage{setspace}

\title{Entropy and Minimax Risk of Hypoelliptic Pseudodifferential Operators}
\date{}
\author{Thomas Allard \\ tallard@ethz.ch  \and Helmut Bölcskei \\ hboelcskei@ethz.ch}

\begin{document}

\maketitle

\abstract{
\noindent
We characterize the 
entropy and
minimax risk of a broad class of compact pseudodifferential operators.
Under suitable decay and regularity conditions on the symbol, 
we combine a Weyl-type asymptotic relation between the 
eigenvalue-counting function and the phase-space volume of the symbol 
with a general correspondence between spectral
quantities, entropy, and minimax risk for compact operators.
This approach yields explicit
asymptotic formulae for both entropy and minimax risk directly in terms
of the symbol.
As an application, we derive sharp entropy and minimax risk asymptotics for
unit balls in Sobolev spaces on unbounded domains, thereby extending
Pinsker’s theorem for Sobolev classes beyond the bounded-domain setting,
and showing that the sharp asymptotic constants are determined by
phase-space geometry rather than domain geometry.
}

\vspace{1cm}

\noindent
{\em H. B\"{o}lcskei dedicates this paper to Prof. Thomas Kailath on the occasion of his 90th birthday.}

\section{Introduction}

Metric entropy provides a natural measure of the massiveness of classes of mathematical objects—such as functions, dynamical systems, or statistical estimators—by quantifying the number of bits required to approximate their elements uniformly to a given precision. 
This concept plays a central role in a wide range of mathematical fields, including approximation theory \cite{lorentzApproximationFunctions1966,lorentzMetricEntropyApproximation1966,lorentzConstructiveApproximationAdvanced1996,cohenTreeApproximationOptimal2001, cohen2022optimal}, 
harmonic analysis \cite{donohoUnconditionalBasesAre1993,donohoUnconditionalBasesBitLevel1996,donohoDataCompressionHarmonic1998,Grohs2015},
high-dimensional statistics and probability \cite{Vershynin_2018,wainwrightHighDimensionalStatistics2019}, 
and machine learning \cite{elbrachterDeepNeuralNetwork2021,hutterMetricEntropyLimits2022,20.500.11850/734099}. The theory of
metric entropy itself has also attracted renewed interest in recent years  \cite{expdecaypaper,polydecaypaper,ab2025compactoperators,allard2025metricentropyminimaxrisk,20.500.11850/734099}; this line of work goes beyond purely scaling-based considerations and develops a more quantitative framework for complexity estimation.

Minimax risk in non-parametric estimation provides a complementary notion of complexity, quantifying the intrinsic difficulty of recovering elements of a class from noisy observations; see, e.g., \cite{tsybakovIntroductionNonparametricEstimation2009,johnstone2019estimation,nussbaum1999minimax}. While metric entropy and minimax risk are closely related, existing characterizations typically treat them through separate analytical frameworks and under restrictive structural assumptions. The present paper develops a unified framework for characterizing the 
entropy and minimax risk of a large class of compact pseudodifferential operators. Explicit asymptotic characterizations are derived in terms of the associated operator symbols. These quantities characterize operator compactness by describing, respectively, the effective size of the operator image of the unit ball and the intrinsic difficulty of recovering elements of this image from noisy measurements \cite{ab2025compactoperators,allard2025metricentropyminimaxrisk}.

We build on the well-known observation that images of pseudodifferential operators are often localized in phase space, with localization properties governed by the associated operator symbols; see, for example, the discussion of uncertainty principles and phase-space concentration in \cite{fefferman_uncertainty_1983}. This observation motivates characterizing the metric entropy and minimax risk of operator images as a natural and effective way to study the complexity of sets of phase-space-localized signals. Such a viewpoint was adopted by the authors in \cite[Section~3.1]{ab2025compactoperators}, where the metric entropy of signal classes subject to time–frequency concentration constraints is analyzed via the ellipsoidal structure of the image of the unit ball under the Landau–Pollak–Slepian operator. This approach yields improvements over the previously best-known entropy characterizations for both the Landau–Pollak–Slepian operator and Sobolev balls. 

The central contribution of the present paper is to expand this localization-based methodology by dispensing with explicit ellipsoidal structure and developing a symbol-based framework that applies to broad classes of compact pseudodifferential operators, thereby enabling a unified analysis of 
entropy and minimax risk under general phase-space localization conditions encoded by symbol decay and regularity.

For compact pseudodifferential operators with sufficiently regular symbols, phase-space localization at the symbol level is reflected in the spectral 
properties 
of the operator; see, e.g., \cite{fefferman_uncertainty_1983} for the underlying microlocal principle.
A spectral formulation is therefore natural.  In the analysis of 
entropy and minimax risk for compact linear operators, it suffices to restrict attention to operators that are positive and self-adjoint. This follows from the fact that any compact operator admits a polar decomposition, and that metric entropy and minimax risk depend only on the geometry of the image of the unit ball, which is preserved under the isometry induced by the polar decomposition 
(see, e.g., \cite{ab2025compactoperators}, \cite{prosserEEntropyECapacityCertain1966},  or \cite[Chapter~3.4]{carlEntropyCompactnessApproximation1990}).
Positive self-adjoint compact operators $T$ admit a spectral decomposition; the corresponding nonzero eigenvalues $\{\lambda_n\}_{n\in\N^*}$, listed in non-increasing order and counted with 
multiplicity, are nonnegative and tend to zero.
The spectral properties of $T$ are typically characterized through the eigenvalue-counting function
\begin{equation}\label{eq:definition-ecf}
    M_T(\lambda) \coloneqq  \# \left\{n\in\N^* \mid \lambda_n \geq \lambda \right\},
    \quad \text{for all } \lambda>0.
\end{equation}
It is well established that the entropy of a positive self-adjoint compact linear operator is governed by its spectral properties;
see, e.g., \cite{ab2025compactoperators,prosserEEntropyECapacityCertain1966,carlInequalitiesEigenvaluesEntropy1980,carl1981entropy,konigEigenvalueDistributionCompact1986,edmundsFunctionSpacesEntropy1996}.
Expressing the operator in its eigenbasis turns these spectral properties into a geometric description of the image of the unit ball under the operator. In particular, this image is an ellipsoid whose semi-axes are given by the eigenvalues $\{\lambda_n\}_{n\in\N^*}$, see \cite{ab2025compactoperators}.
As shown in \cite[Theorems~2, 4, and 5]{allard2025metricentropyminimaxrisk}, this ellipsoidal structure yields 
sharp asymptotic characterizations for both the 
entropy $H_T$ and the minimax risk $R_T$ of $T$ via the type-$\tau$ (for $\tau \geq 1$) integrals \begin{equation}\label{eq:type-tau-integrals}
    I_\tau (\varepsilon) \coloneqq \int_{\varepsilon}^{\infty} \frac{M_T(\lambda)}{\lambda^\tau}\diff \lambda, \quad 
    \varepsilon > 0.
\end{equation}
Specifically, the minimax risk $R_T$ 
is asymptotically determined by the type-$2$ and type-$3$ integrals according to
\begin{equation}\label{eq:integral-ecf-risk}
    R_T (\kappa)
    \sim \kappa^2 \varepsilon_\kappa I_2(\varepsilon_\kappa),
    \quad \text{as } \kappa \rightarrow 0,
\end{equation}
where the critical radius $\varepsilon_\kappa$ is defined as the unique solution of
\begin{equation}\label{eq:def-critical-radius}
    \kappa^2 \left(2I_3(\varepsilon_\kappa) - \frac{I_2(\varepsilon_\kappa)}{\varepsilon_\kappa}\right)=1,
    \quad \text{for all } \kappa > 0.
\end{equation}
Moreover, under a mild regularity condition on the eigenvalue-counting function $M_T$,
the 
entropy $H_T$ is asymptotically equivalent to the type-$1$ integral, that is, 
\begin{equation}\label{eq:integral-ecf-entropy}
    H_T(\varepsilon) 
    \sim  I_1(\varepsilon),
    \quad \text{as } \varepsilon \rightarrow 0.
\end{equation}

The relations \eqref{eq:integral-ecf-risk}--\eqref{eq:integral-ecf-entropy} reduce the characterization of the asymptotic behavior of the 
entropy and minimax risk of a compact operator to that of its asymptotic spectral distribution. 
This reduction has already proved effective in \cite[Section~3.2]{ab2025compactoperators} and \cite[Section~5]{allard2025metricentropyminimaxrisk}, where sharp asymptotic characterizations of metric entropy and minimax risk for unit balls in Sobolev spaces on bounded domains were derived.
In that setting, the operator $T$ is related to the inverse of the Laplacian, and the required asymptotics of the eigenvalue-counting function follow from the Weyl law (see, e.g., \cite[Chapter~9.5]{shubinInvitationPartialDifferential2020}) and its Riesz-means counterpart (see \cite{frank2024riesz}).
The goal of the present paper is to extend this spectral-reduction strategy to operator classes for which the asymptotic behavior of the eigenvalue-counting function can be derived from symbol-level information, notably compact pseudodifferential operators.

An important consequence of our results is a conceptual extension of Pinsker’s
theorem \cite{pinsker1980optimal} to unbounded domains. Pinsker’s original result
and its subsequent extensions are formulated in settings where compactness
arises from working on bounded domains, leading to compact embeddings of Sobolev
balls into $L^2$. We show that after restoring compactness on $\R^d$ through
spatial confinement, the same minimax principle continues to govern statistical
estimation. In this setting, the sharp asymptotic constants are determined by
phase-space geometry rather than by domain geometry, indicating that Pinsker’s
theorem reflects a structural feature of nonparametric estimation rather than an
artifact of bounded domains.

Pseudodifferential operators are linear operators acting on the Schwartz space
\[
\mathcal{S}(\R^d)
=
\left\{ f \in C^\infty(\R^d) :
\sup_{x\in\R^d} \left| x^\alpha \partial^\beta f(x) \right| < \infty
\text{ for all } \alpha, \beta \in \N^d \right\},
\]
and are of the form
\begin{equation}\label{eq:kohn-nirenberg-def}
    T_{\symbolnot} f (x)
    =
    \int_{\R^d}
    \symbolnot(x, \omega)\, \hat f (\omega)\, e^{2i\pi x \cdot \omega}\, d\omega,
\end{equation}
where $\symbolnot \in C^\infty(\R^d\times\R^d)$ is the associated symbol and
$x \cdot \omega$ denotes the Euclidean inner product in $\R^d$.
Under appropriate regularity assumptions on $\symbolnot$, such operators 
can be extended compactly to 
$L^2(\R^d)$ and admit a well-developed spectral theory. Pseudodifferential operators were originally introduced in mathematical physics in connection with quantization; see, e.g.,
\cite{weyl1950theory,wong1998weyl}.
They subsequently became central tools in harmonic analysis (see \cite{steinharmonic} and \cite[Chapter~2]{follandharmonic}), time-frequency analysis (see \cite[Chapter 14]{grochenig_foundations_2001}), 
electrical engineering in the context of linear-time-varying systems \cite{zadeh1950determination,zadeh1950frequency}, 
and wavelet theory (\cite[Chapter 2.10]{meyerWaveletsOperators1993}). 
In pure mathematics, pseudodifferential operators play a fundamental role in partial differential equations and microlocal analysis \cite{Taylor+1981,shubin_pseudodifferential_2001,hormander2007analysis,hormanderAnalysisLinearPartial2009,treves2013psido,hinzmicrolocal}, and they form a key ingredient in the proof of the Atiyah-Singer index theorem (see, e.g., \cite{landweber2005k}).

The mapping \eqref{eq:kohn-nirenberg-def} that associates a linear operator to a
symbol is known as the Kohn--Nirenberg quantization.
In addition to \eqref{eq:kohn-nirenberg-def}, two other classical quantization
schemes are commonly used, namely the Weyl quantization and the right
quantization; these are reviewed in Appendix~\ref{sec:review-lit}.
An important aspect of our analysis is that the results do not depend on the
particular choice of quantization; this is formalized in
Theorem~\ref{thm:main-result}.
Accordingly, we often write $H_\sigma$, $M_\sigma$, and $R_\sigma$ for the 
entropy, eigenvalue-counting function, and minimax risk of $T_\sigma$, omitting
explicit reference to the quantization when it is irrelevant.

Many properties of pseudodifferential operators relevant to spectral asymptotics are naturally expressed at the level of the symbol. In particular, for symbols that are positive and decay at infinity, 
a central quantity in our analysis is the phase-space volume above level 
$\lambda>0$, defined by
\begin{equation}\label{eq:definition-volume-function}
    V_{\symbolnot}(\lambda) 
    \coloneqq \int_{\mathbb{R}^{2d}} \mathbbm{1}_{\{\symbolnot(x, \omega)\, > \, \lambda\}} \diff  x\diff \omega.
\end{equation}
The function $V_{\symbolnot}$ records the phase-space distribution of the symbol.
Under the regularity conditions specified in the main results below, the behavior of $V_{\symbolnot}(\lambda)$ as $\lambda \rightarrow 0$ quantifies the decay of $\sigma$. 
As recalled in Theorem~\ref{thm:Dauge-Robert}, this gives rise to a Weyl-type asymptotic for the spectral distribution of $T_{\symbolnot}$, namely 
\begin{equation}\label{eq:volume-ecf-relation}
    M_{\symbolnot}(\lambda)  \sim V_{\symbolnot}(\lambda), \quad 
    \lambda \rightarrow 0. 
\end{equation}
Relation \eqref{eq:volume-ecf-relation} serves as the link between symbol-level quantities and the general 
entropy and minimax-risk characterizations in 
\eqref{eq:integral-ecf-risk}--\eqref{eq:integral-ecf-entropy}. Combining these results leads to the main conclusions of the paper,
stated in Theorem~\ref{thm:main-result} and Corollaries~\ref{cor:ME-Main}--\ref{cor:MR-Main}. In particular, the 
entropy satisfies
 \begin{equation}\label{eq:main-res-entropy}
    H_{\symbolnot}(\varepsilon) 
    \sim \int_{\R^{2d}} \ln_{+}\left(\frac{\symbolnot(x, \omega)}{\varepsilon}\right) \diff  x \diff \omega, 
    \quad \varepsilon \to 0,
\end{equation}
and the minimax risk has the asymptotic behavior 
\begin{equation}\label{eq:main-res-risk-1}
    R_\symbolnot (\kappa)
    \sim \kappa^2 \int_{\R^{2d}} \left(1-\frac{\varepsilon_\kappa}{\symbolnot(x, \omega)}\right)_+ \diff  x \diff \omega, 
    \quad  \kappa \to 0,
\end{equation}
where the critical radius $\varepsilon_\kappa$ is determined by 
\begin{equation}\label{eq:main-res-risk-2}
    \kappa^2 
    \int_{\R^{2d}} \frac{1}{\symbolnot(x, \omega) \, \varepsilon_\kappa }\left(1-\frac{\varepsilon_\kappa}{\symbolnot(x, \omega)}\right)_+ \diff  x \diff \omega =1, 
    \quad  \kappa >0.
\end{equation}
To put \eqref{eq:volume-ecf-relation} into context, we review the classical
Weyl-type theory for elliptic operators in Appendix~\ref{sec:review-lit}.
In Section~\ref{sec:main-results}, we adapt these ideas to the compact
symbol-decay setting considered here and use them to establish the
entropy–volume relation of Theorem~\ref{thm:main-result}. Section~\ref{sec:main-results}
also contains the principal conceptual contributions of the paper, namely the asymptotic formulae \eqref{eq:main-res-entropy}–\eqref{eq:main-res-risk-2} together with their interpretation. Finally, Section~\ref{sec:appli-sobolev-spaces} applies these results to Sobolev spaces on unbounded domains, complementing the corresponding results for bounded domains obtained in \cite[Section~3.2]{ab2025compactoperators} and establishing Pinsker’s theorem in the unbounded setting.

\paragraph{Notation.}
We write $\N$ for the set of natural numbers including zero,
$\N^*$ for the set of natural numbers excluding zero,
$\R$ for the real numbers, and $\R^*_+$ for the positive real numbers.
For $d \in \N^*$, we denote by $\omega_d$ the volume of the unit ball in $\R^d$,
and by $\|z\|_2$ the Euclidean norm of $z \in \R^d$.

$L^2(\R^d)$ and $C^\infty(\R^d)$ designate, respectively, the Lebesgue space of square-integrable functions and the space of infinitely differentiable functions on $\R^d$.
For $f \in C^\infty(\R^d)$, we write $\nabla f$ for its gradient and $\partial_i f$ for its partial derivative with respect to the $i$-th coordinate, where $i \in \{1,\dots,d\}$. A multi-index $\alpha = (\alpha_1,\dots,\alpha_d) \in \N^d$ defines the partial derivative
$\partial^\alpha = \partial_1^{\alpha_1} \cdots \partial_d^{\alpha_d}$,
with order $|\alpha| = \alpha_1 + \cdots + \alpha_d$.
The Fourier transform of a function $f \in L^2(\R^d)$ is denoted by $\hat f \in L^2(\R^d)$.

When comparing the asymptotic behavior of functions
$f,g \colon \R_+^* \to \R_+^*$ as $x \to 0$, we write
$f(x) = o_{x \to 0}(g(x))$ if $\lim_{x \to 0} f(x)/g(x) = 0$, and
$f(x) = O_{x \to 0}(g(x))$ if there exists a constant $C>0$ such that
$\limsup_{x \to 0} f(x)/g(x) \le C$.
Further, $f(x) \sim g(x)$ as $x \to 0$ if $\lim_{x \to 0} f(x)/g(x) = 1$.
We write $f(x) \lesssim g(x)$ as $x \to 0$ if there exist constants
$C>0$ and $x_0>0$ such that $f(x) \le C\, g(x)$ for all $x \in (0,x_0)$,
and $f(x) \gtrsim g(x)$ as $x \to 0$ if $g(x) \lesssim f(x)$.
Moreover, $f(x) \asymp g(x)$ as $x \to 0$
if there exist constants $0<c\le C<\infty$ and $x_0>0$ so that $
c\,g(x) \le f(x) \le C\,g(x), \, x \in (0,x_0)$.

Finally, $\ln(\cdot)$ denotes the natural logarithm,
$\ln_+(x) = \max\{0,\ln(x)\}$ for $x \ge 0$ (with the convention $\ln_+(0)=0$),
$(x)_+ = \max\{0,x\}$ for $x \in \R$,
and $\mathbbm{1}_X(\cdot)$ stands for the indicator function of the set $X$.

\section{Entropy and Minimax Risk of Compact Hypoelliptic Pseudodifferential Operators}\label{sec:main-results}

We recall the definition of metric entropy for compact sets and the associated notion of entropy for compact linear operators.
Let $\mathcal{H}$ be a separable real Hilbert space and let $\mathcal{K} \subset \mathcal{H}$ be a compact set.
For $\varepsilon > 0$, an {$\varepsilon$-covering} of $\mathcal{K}$ is a finite set
$\{x_1,\dots,x_N\} \subset \mathcal{H}$ with the property that, for every
$x \in \mathcal{K}$, there exists an index $i \in \{1,\dots,N\}$ such that
\[
\|x - x_i\|_{\mathcal{H}} \le \varepsilon .
\]
The {$\varepsilon$-covering number} $N(\varepsilon;\mathcal{K},\|\cdot\|_{\mathcal{H}})$ is defined as the cardinality of a smallest such $\varepsilon$-covering; the {metric entropy} of $\mathcal{K}$ is the natural logarithm of the $\varepsilon$-covering number.

Given a compact linear operator $T\colon \mathcal{H} \to \mathcal{H}$, the {entropy} of $T$ is defined as the metric entropy of the closure of the image of the unit ball $\mathcal{B}_{\mathcal{H}} \subset \mathcal{H}$ under $T$, namely
\begin{equation}
\label{eq:def-entropy-operator}
    H_T(\varepsilon)
    \coloneqq
    \ln N \left(\varepsilon; \overline{T(\mathcal{B}_{\mathcal{H}})}, \|\cdot\|_{\mathcal{H}}\right).
\end{equation}
Since $T$ is compact, the set $\overline{T(\mathcal{B}_{\mathcal{H}})}$ is compact in $\mathcal{H}$, and the right-hand side of \eqref{eq:def-entropy-operator} is therefore finite for every $\varepsilon>0$.

The symbols treated in the classical literature on spectral asymptotics for
pseudo\-differential operators are predominantly drawn from the class
$\hypoposclass(\R^{2d})$ of positive-order hypoelliptic symbols;
see Theorem~\ref{thm-shubin} and the review of the standard spectral theory for
hypoelliptic pseudodifferential operators in
Appendix~\ref{sec:review-lit}. In this regime, the associated operators are necessarily
non-compact. Consequently, their entropy is not well defined and their minimax
risk does not converge to zero. 
By contrast, operators whose symbols lie in $\hyponegclass(\R^{2d})$
can be extended to compact operators on $L^2(\R^d)$; 
see, for example, \cite[Theorem~24.4]{shubin_pseudodifferential_2001}.
The results developed here build on spectral asymptotics for such compact
hypoelliptic operators, recalled in Theorem~\ref{thm:Dauge-Robert}.
\color{black}

The asymptotic formulae derived below are obtained under the assumption that the
volume function $V_\sigma$ is regularly varying at zero with negative index.
This assumption excludes highly irregular decay behavior while encompassing
the polynomial decay rates that arise naturally in hypoelliptic
pseudodifferential calculus, as well as mild logarithmic perturbations thereof. Its role is to ensure sufficient
regularity of the associated eigenvalue-counting function to allow for a precise
evaluation of the leading-order asymptotics in the integral characterizations of entropy
and minimax risk.

More concretely, regular variation of the volume function $V_\sigma$ implies
condition~(RC) in \cite[Theorem~2]{allard2025metricentropyminimaxrisk}, 
allowing one to pass from spectral information to the entropy and minimax-risk asymptotics
in \eqref{eq:main-res-entropy}--\eqref{eq:main-res-risk-2}.
Background on regular variation and a detailed proof of the implication
\textcolor{blue}{`}$V_\sigma$ regularly varying $\Rightarrow$ condition~(RC)\textcolor{blue}{'} can be found in
\cite{binghamRegularVariation1987} and in
\cite[Appendix~C]{allard2025metricentropyminimaxrisk}.


\begin{theorem}\label{thm:main-result}
Let $\sigma\in\hyponegclass(\R^{2d})$ be a strictly positive hypoelliptic symbol, and
assume that the associated volume function $V_\sigma$ is regularly varying at
zero. 
Then,
\begin{equation}\label{eq:main-res-volume-integral}
    H_\sigma(\varepsilon)
    \sim
    \int_{\varepsilon}^{\infty}
        \frac{V_\sigma(\lambda)}{\lambda}\,\diff\lambda,
    \qquad \varepsilon\to0,
\end{equation}
where $H_\sigma$ denotes the entropy of the operator associated with $\sigma$,
independently of whether $\sigma$ is quantized using the left, right, or Weyl
quantization. Moreover, if the operator $T_{\sigma^{-1}}$ is invertible, then
\begin{equation}\label{eq:main-result-part2}
    H_{T^{-1}_{\sigma^{-1}}}(\varepsilon)
    \sim
    H_\sigma(\varepsilon),
    \qquad \varepsilon\to0.
\end{equation}
\end{theorem}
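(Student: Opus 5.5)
We combine three ingredients already available in the paper: the Weyl-type relation \eqref{eq:volume-ecf-relation} from Theorem~\ref{thm:Dauge-Robert}, the entropy characterization \eqref{eq:integral-ecf-entropy} from \cite[Theorem~2]{allard2025metricentropyminimaxrisk}, and the implication `$V_\sigma$ regularly varying $\Rightarrow$ (RC)'.

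\textbf{Step 1: reduction to positive self-adjoint operators.} For a left, right, or Weyl quantization $T$ of $\sigma$, we pass to $|T|=(T^*T)^{1/2}$. By polar decomposition, $H_T=H_{|T|}$, and the singular values of $T$ are the eigenvalues of $|T|$. We then argue, using the symbolic calculus for $\hyponegclass(\R^{2d})$, that $T^*T$ is a pseudodifferential operator whose principal symbol is $\sigma^2$. The symbols obtained by changing quantization differ from $\sigma$ only by lower-order terms, which are negligible relative to $\sigma$ by hypoellipticity. Theorem~\ref{thm:Dauge-Robert}, applied to $T^*T$ with level $\lambda^2$, should then give
\[
M_{|T|}(\lambda)\sim V_{\sigma^2}(\lambda^2)=V_\sigma(\lambda),\qquad \lambda\to 0,
\]
for each of the three quantizations. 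This is where independence of the quantization enters.

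\textbf{Step 2: transfer to the type-$1$ integral.} Since $V_\sigma$ is regularly varying at zero, so is $M_{|T|}$, with the same index, because asymptotic equivalence preserves regular variation. Hence condition~(RC) holds for $M_{|T|}$, and \eqref{eq:integral-ecf-entropy} gives $H_\sigma(\varepsilon)\sim\int_\varepsilon^\infty M_{|T|}(\lambda)\lambda^{-1}\,\diff\lambda$.

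To replace $M_{|T|}$ by $V_\sigma$, fix $\delta>0$ and choose $\lambda_0$ with $(1-\delta)V_\sigma\le M_{|T|}\le(1+\delta)V_\sigma$ on $(0,\lambda_0)$. Split the integral at $\lambda_0$.
\begin{itemize}
\item The part over $[\lambda_0,\infty)$ is a constant, since both functions vanish above $\max\{\|T\|,\sup\sigma\}$.
\item $\int_\varepsilon^\infty V_\sigma(\lambda)\lambda^{-1}\,\diff\lambda\to\infty$, because regular variation with negative index gives $V_\sigma(\lambda)\to\infty$.
\end{itemize}
So the constant is negligible, and letting $\delta\to0$ yields \eqref{eq:main-res-volume-integral}. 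The case of index zero would need separate care; we expect the hypotheses to force a negative index.

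\textbf{Step 3: the inverse statement \eqref{eq:main-result-part2}.} The operator $T_{\sigma^{-1}}$ has symbol in $\hypoposclass(\R^{2d})$. Its inverse is compact and is a pseudodifferential operator with symbol in $\hyponegclass(\R^{2d})$, obtained from a parametrix construction. Its principal part is $\sigma$, modulo lower-order terms that are relatively negligible. Equivalently, we can apply Theorem~\ref{thm-shubin} to $T_{\sigma^{-1}}$ (or to $T_{\sigma^{-1}}^*T_{\sigma^{-1}}$), which gives counting asymptotics $\#\{\mu_n\le 1/\lambda\}\sim V_{\sigma^{-1}}^{\le}(1/\lambda)=V_\sigma(\lambda)$ up to a null level set. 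Inverting the eigenvalues shows that the eigenvalue-counting function of $T^{-1}_{\sigma^{-1}}$ (or of its modulus) is $\sim V_\sigma$. Repeating Step 2 then gives the same asymptotics, and hence \eqref{eq:main-result-part2}.

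\textbf{Main obstacle.} The main obstacle is Step 1 and its analogue in Step 3: justifying that the Weyl asymptotics for the modulus hold with the same volume function $V_\sigma$ for all quantizations and for the inverse. This amounts to showing that $V_{\sigma+r}\sim V_\sigma$ when $r$ is lower order relative to $\sigma$. We would first use $|r|\le\eta\sigma$ outside a compact set to sandwich
\[
V_\sigma\bigl(\lambda/(1-\eta)\bigr)\le V_{\sigma+r}(\lambda)\le V_\sigma\bigl(\lambda/(1+\eta)\bigr),
\]
and then use regular variation, $V_\sigma(c\lambda)\sim c^{-\alpha}V_\sigma(\lambda)$, and let $\eta\to0$.
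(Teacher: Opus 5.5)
Your argument is sound in outline, and for the Weyl quantization it is essentially the paper's argument: polar decomposition, then Theorem~\ref{thm:Dauge-Robert}, then \cite[Theorem~2]{allard2025metricentropyminimaxrisk}. For quantization independence and for the inverse, however, you take a different route.

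The paper applies Theorem~\ref{thm:Dauge-Robert} only once, to $T_\sigma$ itself, with $\phi=\varphi=(1+\|z\|_2^2)^{\rho/2}$ and $w=\sigma$; most of the written proof verifies (H1), (H2), (W), (N) for this choice. It then handles the left/right quantizations and $T^{-1}_{\sigma^{-1}}$ purely at the operator level. It writes $T_1=T_2(I+K)+K_{-\infty}$ with $K=B_2(T_1-T_2)\in\Psi^{-2\rho}_\rho(\R^d)$ built from a parametrix $B_2$, or $T_\sigma=T^{-1}_{\sigma^{-1}}(I+R)$ from $T_{\sigma^{-1}}T_\sigma=I+R$. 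Lemma~\ref{lem:technical-lemma} then compares entropy numbers by sub-multiplicativity and sub-additivity, using the Weyl-case asymptotics to absorb the index shift $n\mapsto n+5\lfloor n^{1/2}\rfloor$ and the smoothing terms.

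You instead push every perturbation down to the symbol. The Weyl symbol of $T^*T$ is $\sigma^2(1+r)$, and that of the inverse is $\sigma(1+\tilde r)$ modulo $\Gamma^{-\infty}$, with $r,\tilde r\to0$ at infinity. You apply Dauge--Robert to each operator, and your sandwich-plus-regular-variation argument identifies the resulting volume functions with $V_\sigma$. The paper's route needs Dauge--Robert only for the non-negative symbol $\sigma$ and only needs $K$ to have negative order, but it yields nothing beyond $H_{T_1}\sim H_{T_2}$. Yours costs more symbolic calculus and repeated use of Dauge--Robert. In return it gives the stronger singular-value asymptotics $M_{|T|}\sim V_\sigma$ for every quantization and for $T^{-1}_{\sigma^{-1}}$. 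That is the spectral input (condition (RC) for the counting function of the actual operator) on which Corollary~\ref{cor:MR-Main} relies for non-Weyl quantizations.

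Some points need tightening.

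\textbf{Weights and hypotheses.} You never name the weights or check (H1)--(N). With $w=\sigma^2$ (resp.\ $w=\sigma$) and $\phi=\varphi$ as above, the paper's verification carries over, since $\sigma^2$ is again a strictly positive negative-order hypoelliptic symbol. The full Weyl symbol lies in $\Gamma(\R^{2d};w,\phi,\varphi)$ once the composition remainder is taken of order at most $-2m_+$ (resp.\ $-m_+$).

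\textbf{Non-negativity.} The perturbed symbols are only known to be positive outside a compact set, whereas Theorem~\ref{thm:Dauge-Robert} as stated assumes non-negativity. Either invoke the sign-allowing original result, or alter the symbol on a compact set and absorb the resulting $B\in\Psi^{-\infty}(\R^d)$. The Weyl inequality $M_{A+B}(\lambda)\le M_A((1-\delta)\lambda)+M_{|B|}(\delta\lambda)$ and its reverse do this, since $M_{|B|}(\mu)$ grows slower than any power of $\mu^{-1}$.

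\textbf{The alternative in Step~3.} Drop the ``equivalent'' route through Theorem~\ref{thm-shubin}. It needs the radial condition \eqref{eq:assumption-shubin} for $\sigma^{-1}$, which is not among the hypotheses. It fails as soon as $\sigma^{-1}$ has radial critical points arbitrarily far out, and the hypotheses do not exclude this. Your parametrix route is fine once you note that the true inverse differs from a parametrix by an element of $\Psi^{-\infty}(\R^d)$.

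\textbf{Index zero.} This case cannot occur. The bound $\sigma(z)\ge C_1(1+\|z\|_2^2)^{-m_+/2}$ for large $\|z\|_2$ gives $V_\sigma(\lambda)\gtrsim\lambda^{-2d/m_+}$. By Potter's bounds, the index of $V_\sigma$ is therefore at most $-2d/m_+<0$.
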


\begin{proof}
See Section~\ref{sec:proof-main-result}.
\end{proof}

\noindent
The proof of Theorem~\ref{thm:main-result} is carried out within the standard
hypoelliptic pseudodifferential operator framework summarized in
Appendix~\ref{sec:review-lit}, following \cite{shubin_pseudodifferential_2001}.
The strict positivity assumption on the symbol $\sigma$ in
Theorem~\ref{thm:main-result} is not essential, but it simplifies the proof by
ensuring that the symbol is invertible. A strategy for removing this assumption
proceeds as follows. By hypoellipticity, the symbol $\sigma$ has a fixed sign
outside a sufficiently large ball in phase space; without loss of generality,
this sign may be taken to be positive. One may therefore modify $\sigma$ inside
this ball so as to make it strictly positive everywhere, without affecting the
asymptotic behavior of the associated volume function $V_\sigma$, and hence
without altering the resulting entropy asymptotics. Consequently, the additional (strict) positivity assumption in
Theorem~\ref{thm:main-result} entails no loss of generality for the problems
considered here. Moreover, all symbols arising in the applications discussed in
Section~\ref{sec:appli-sobolev-spaces} are strictly positive.

The following heuristic motivates the use of \eqref{eq:volume-ecf-relation} in the
derivation of the entropy and minimax-risk asymptotics
\eqref{eq:main-res-entropy} and \eqref{eq:main-res-risk-1}. Guided by the
uncertainty principle, one expects the eigenfunctions of $T_\sigma$ to be
essentially localized in phase space on regions of unit volume. Within such a
localized region centered at a point $(x_0,\omega_0)\in\R^{2d}$, the symbol
$\sigma$ may be regarded as approximately constant, with value
$\sigma(x_0,\omega_0)$.

Under this approximation, the number of eigenvalues exceeding a level
$\lambda>0$ is expected to be proportional to the number of disjoint unit-volume
regions of phase space centered at points $(x,\omega)$ for which
$\sigma(x,\omega)>\lambda$. This leads naturally to the volume-based relation
\eqref{eq:volume-ecf-relation}. A related heuristic for elliptic operators is
discussed by Fefferman in \cite{fefferman_uncertainty_1983}. Of course, the
argument above is purely heuristic; a central contribution of the present work is
to identify precise conditions under which the insertion of
\eqref{eq:volume-ecf-relation} into the entropy and minimax-risk integrals can be
rigorously justified in the compact, symbol-decay setting considered here.

The volume principle underlying \eqref{eq:volume-ecf-relation} is reminiscent of the
approximate diagonalization results in underspread operator theory
\cite{kailath1963timevariant, bello1969measurement, pfander2006measurement,
durisi2008noncoherent, heckel2013identification, matz1998time, matz2013time}, in that
both frameworks interpret the Weyl symbol as an effective time--frequency transfer
function and relate spectral quantities to its pointwise values through a
phase-space localization principle.
In that setting, one expects entropy formulae of the same log-integral form as \eqref{eq:main-res-entropy}, paralleling Shannon-type expressions for frequency-selective channels. Developing a rigorous analogue of Theorem~\ref{thm:main-result} for underspread operators is an interesting direction for future work.

We now present a reformulation of Theorem~\ref{thm:main-result} that makes the
connection between entropy and the symbol explicit. Again, the
symbol $\sigma$ in Corollary~\ref{cor:ME-Main} may be taken to be the left, right,
or Weyl symbol of the associated operator.

\begin{corollary}\label{cor:ME-Main}
Let $\sigma\in\hyponegclass(\R^{2d})$ be a strictly positive hypoelliptic symbol
and assume that the associated volume function $V_\sigma$ is regularly varying at
zero with negative index. Then, the entropy admits the
asymptotic representation
\begin{equation}\label{eq:main-res-entropy-corr}
    H_{\sigma}(\varepsilon)
    \sim \int_{\R^{2d}} \ln_{+}\!\left(\frac{\sigma(x,\omega)}{\varepsilon}\right)
    \diff x\,\diff\omega,
    \qquad \varepsilon \to 0.
\end{equation}
\end{corollary}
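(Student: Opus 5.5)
The plan is to take Theorem~\ref{thm:main-result} as given and reduce Corollary~\ref{cor:ME-Main} to a layer-cake identity. By \eqref{eq:main-res-volume-integral},
\[
H_\sigma(\varepsilon)\sim \int_\varepsilon^\infty \frac{V_\sigma(\lambda)}{\lambda}\diff\lambda, \qquad \varepsilon \to 0,
\]
so it suffices to show that the right-hand side equals $\int_{\R^{2d}}\ln_+(\sigma/\varepsilon)\diff x\diff\omega$ for every $\varepsilon>0$. No asymptotics are needed at this stage.

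The identity follows from Tonelli's theorem, since all integrands are nonnegative and measurable. First insert the definition \eqref{eq:definition-volume-function} of $V_\sigma$ and exchange the order of integration:
\[
\int_\varepsilon^\infty \frac{1}{\lambda}\int_{\R^{2d}}\mathbbm{1}_{\{\sigma(x,\omega)>\lambda\}}\diff x\diff\omega\diff\lambda
=\int_{\R^{2d}}\int_\varepsilon^\infty \frac{\mathbbm{1}_{\{\lambda<\sigma(x,\omega)\}}}{\lambda}\diff\lambda\diff x\diff\omega .
\]
Next evaluate the inner integral pointwise. It equals $\ln(\sigma(x,\omega)/\varepsilon)$ when $\sigma(x,\omega)>\varepsilon$, and $0$ otherwise. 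In both cases this is exactly $\ln_+(\sigma(x,\omega)/\varepsilon)$.

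Both sides may be infinite a priori, and Tonelli's theorem applies even then. Finiteness nevertheless follows from the hypotheses. Because $\sigma\in\hyponegclass$ decays at infinity, $V_\sigma(\lambda)<\infty$ for every $\lambda>0$. Because $\sigma$ is bounded, $V_\sigma(\lambda)=0$ for all $\lambda\ge\sup\sigma$, so the integral $\int_\varepsilon^\infty$ is effectively over the compact interval $[\varepsilon,\sup\sigma]$. Since $V_\sigma$ is monotone, the integrand is bounded on this interval and the integral is finite. This is also what makes the asymptotic equivalence in Theorem~\ref{thm:main-result} meaningful.

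The only step that needs care is checking that the hypotheses of the corollary match those of Theorem~\ref{thm:main-result}, and that the statement holds for every quantization. The corollary assumes regular variation with negative index, which implies the regular variation required by the theorem. The index assumption also guarantees $V_\sigma(\lambda)\to\infty$, so both sides tend to infinity and the entropy is nondegenerate. Independence of the quantization (left, right or Weyl) is inherited directly from Theorem~\ref{thm:main-result}, because the right-hand side of \eqref{eq:main-res-entropy-corr} depends only on the given symbol $\sigma$. There is therefore no real obstacle; the corollary is Theorem~\ref{thm:main-result} combined with Fubini--Tonelli.
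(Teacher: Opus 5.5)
Your proposal is correct and follows the paper's proof essentially verbatim: invoke Theorem~\ref{thm:main-result}, then use Tonelli's theorem to rewrite $\int_\varepsilon^\infty V_\sigma(\lambda)\lambda^{-1}\,d\lambda$ as $\int_{\R^{2d}}\ln_+(\sigma(x,\omega)/\varepsilon)\,dx\,d\omega$ by evaluating the inner $\lambda$-integral pointwise. Your additional remarks on the finiteness of both sides and on the divergence of $V_\sigma$ are correct, but they are not needed for the identity itself.
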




\begin{proof}
    See Section~\ref{sec:proof-ME-Main}.
\end{proof}

\noindent
Recall that, after normalization by a factor $\ln(2)$, the metric entropy of a
compact set admits an information-theoretic interpretation as the number of bits
required to uniformly encode the set with accuracy $\varepsilon$.
From this perspective, relation \eqref{eq:main-res-entropy-corr} suggests the
following interpretation.
The phase space $\R^{2d}$ may be partitioned into regions of unit volume, centered
at points $(x,\omega)$, each contributing to the total entropy the number of bits
required to quantize the local symbol value $\sigma(x,\omega)$.
At the heuristic level, this contribution is given by
$\lceil \log_{+}(\sigma(x,\omega)/\varepsilon)\rceil$, which is equivalent in the
limit $\varepsilon\to0$ to $\log_{+}(\sigma(x,\omega)/\varepsilon)$.
Corollary~\ref{cor:ME-Main} shows that this phase-space heuristic is in fact
correct and yields a sharp characterization of the leading-order term of the
entropy.


Relation \eqref{eq:main-res-entropy-corr} admits a direct phase-space
interpretation in terms of activation and deactivation of degrees of freedom.
Specifically, only those phase-space cells for which the local symbol value
$\sigma(x,\omega)$ exceeds the threshold $\varepsilon$ contribute to the entropy,
while cells with $\sigma(x,\omega)\le\varepsilon$ are effectively inactive.
For each active cell, the contribution to the entropy is logarithmic in the
ratio $\sigma(x,\omega)/\varepsilon$. From this perspective, Corollary~\ref{cor:ME-Main} shows that the overall entropy is
obtained by aggregating the contributions of all active phase-space cells.


Minimax risk has been studied extensively in nonparametric estimation for function
classes and compact subsets of Hilbert spaces, in particular through the analysis
of the spectral decay of associated compact operators, such as covariance
operators, embedding operators, or diagonal operators arising from basis
expansions; see, for example,
\cite{ibragimov1981statistical,pinsker1980optimal,tsybakovIntroductionNonparametricEstimation2009}.
In these settings, the function class is typically represented implicitly as the
image of a unit ball under a compact linear operator, and minimax risk is
characterized in terms of the eigenvalues or singular values of that operator.

In contrast, the present framework treats minimax risk as a functional of the
operator itself and aims to characterize its asymptotic behavior directly at the
symbol level. This operator-centric perspective makes it possible to link minimax
risk explicitly to phase-space geometry and spectral asymptotics, in parallel with
the corresponding entropy analysis developed above. 

To make this operator-level perspective precise, let
$T\colon \mathcal{H}\to\mathcal{H}$ be a compact linear operator on a separable
Hilbert space $\mathcal{H}$. The minimax risk of $T$ is defined over the compact
set
\(
\mathcal{K}=\overline{T(\mathcal{B}_{\mathcal{H}})}\subset\mathcal{H}.
\)
Specifically, for a noise level $\kappa>0$, it is given by
\begin{equation*}
    R_T(\kappa)
    \coloneqq
    \inf_{\hat x_\kappa}
    \sup_{x\in \overline{T(\mathcal{B}_{\mathcal{H}})}}
    \mathbb{E}_{y\sim x}\!\left[
        \bigl\|\hat x_\kappa(y)-x\bigr\|_{\mathcal{H}}^{2}
    \right],
\end{equation*}
where the observation model is
\(
y = x + \kappa \xi,
\)
with $\xi$ a Gaussian random element in $\mathcal{H}$ whose coordinates with
respect to some (and hence any) orthonormal basis of $\mathcal{H}$ are i.i.d.\
standard normal random variables. Here, $\mathbb{E}_{y\sim x}$ is expectation with respect to the law of $y$ for fixed
$x$, and $\hat x_\kappa$ is a measurable estimator based on the observation $y$.

The asymptotic behavior of the minimax risk then follows from
Theorem~\ref{thm:main-result} via the general correspondence between metric entropy
and minimax risk developed in \cite{allard2025metricentropyminimaxrisk}.

\begin{corollary}\label{cor:MR-Main}
Let $\sigma\in\hyponegclass(\R^{2d})$ be a strictly positive hypoelliptic symbol
and assume that the associated volume function $V_\sigma$ is regularly varying at
zero with negative index. Then, the minimax risk satisfies
\begin{equation}\label{eq:main-res-risk-1-corr}
    R_{\sigma}(\kappa)
    \sim
    \kappa^{2}
    \int_{\R^{2d}}
        \left(1-\frac{\varepsilon_\kappa}{\sigma(x,\omega)}\right)_{+}
        \,\diff x\,\diff\omega,
    \qquad \kappa \to 0,
\end{equation}
where the critical radius $\varepsilon_\kappa$ is determined (implicitly) by
\begin{equation}\label{eq:main-res-risk-2-corr}
    \kappa^{2}
    \int_{\R^{2d}}
        \frac{1}{\sigma(x,\omega)\,\varepsilon_\kappa}
        \left(1-\frac{\varepsilon_\kappa}{\sigma(x,\omega)}\right)_{+}
        \,\diff x\,\diff\omega
    = 1,
    \qquad \kappa>0.
\end{equation}
\end{corollary}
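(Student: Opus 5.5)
The plan is to combine the minimax-risk characterization \eqref{eq:integral-ecf-risk}--\eqref{eq:def-critical-radius} from \cite{allard2025metricentropyminimaxrisk} with the Weyl-type relation \eqref{eq:volume-ecf-relation}, $M_\sigma(\lambda)\sim V_\sigma(\lambda)$ as $\lambda\to0$ (Theorem~\ref{thm:Dauge-Robert}). This mirrors the route by which Theorem~\ref{thm:main-result} yields Corollary~\ref{cor:ME-Main}. After that, the resulting volume integrals are rewritten as phase-space integrals by a layer-cake (Fubini) computation.

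\textbf{Step 1: from $M_\sigma$ to $V_\sigma$ in the type-$\tau$ integrals.} Define $J_\tau(\varepsilon)=\int_\varepsilon^\infty V_\sigma(\lambda)\lambda^{-\tau}\diff\lambda$. Since $V_\sigma$ is regularly varying at zero with index $-\alpha<0$ and $\tau\ge1$, the integrand $V_\sigma(\lambda)\lambda^{-\tau}$ is regularly varying with index $-\alpha-\tau<-1$. Hence $J_\tau(\varepsilon)\to\infty$ as $\varepsilon\to0$. Moreover, Karamata's theorem gives
\[
J_\tau(\varepsilon)\sim \frac{\varepsilon^{1-\tau}V_\sigma(\varepsilon)}{\alpha+\tau-1}.
\]
Because the integrals diverge at zero, the asymptotic equivalence $M_\sigma\sim V_\sigma$ transfers to $I_\tau(\varepsilon)\sim J_\tau(\varepsilon)$: split the integral at a small fixed $\delta$ and note that the part over $[\delta,\infty)$ is bounded. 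Since $V_\sigma$ is bounded for large $\lambda$ and vanishes above $\sup\sigma$, these integrals are finite.

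\textbf{Step 2: the critical-radius equation (main obstacle).} The quantity $2I_3(\varepsilon)-I_2(\varepsilon)/\varepsilon$ is a difference of two terms. By Step 1 and Karamata, this difference is asymptotically
\[
\varepsilon^{-2}V_\sigma(\varepsilon)\left(\frac{2}{\alpha+2}-\frac{1}{\alpha+1}\right)
=\varepsilon^{-2}V_\sigma(\varepsilon)\,\frac{\alpha}{(\alpha+1)(\alpha+2)}.
\]
This is positive precisely because the index is negative, so no cancellation occurs. It follows that $2I_3-I_2/\varepsilon\sim 2J_3-J_2/\varepsilon$. I therefore define $\tilde\varepsilon_\kappa$ by $\kappa^2(2J_3(\tilde\varepsilon_\kappa)-J_2(\tilde\varepsilon_\kappa)/\tilde\varepsilon_\kappa)=1$, and I must show $\varepsilon_\kappa\sim\tilde\varepsilon_\kappa$. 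This follows because the function $\varepsilon\mapsto 2J_3-J_2/\varepsilon$ is regularly varying with index $-\alpha-2\neq0$. Regularly varying functions with nonzero index have asymptotically unique asymptotic inverses (Bingham--Goldie--Teugels), so solutions of asymptotically equivalent equations are asymptotically equivalent. Then, using the uniform convergence theorem for regularly varying functions, $\kappa^2\varepsilon_\kappa I_2(\varepsilon_\kappa)\sim\kappa^2\tilde\varepsilon_\kappa J_2(\tilde\varepsilon_\kappa)$. This step is the delicate one, since it is where regular variation is genuinely needed. I would alternatively invoke the version of \cite[Theorems~4,5]{allard2025metricentropyminimaxrisk} stated under condition (RC) directly, if that version allows replacing $M_T$ by an asymptotically equivalent function.

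\textbf{Step 3: layer-cake identities.} By Fubini,
\[
\varepsilon J_2(\varepsilon)=\int_{\R^{2d}}\int_\varepsilon^{\infty}\frac{\varepsilon}{\lambda^2}\mathbbm 1_{\{\sigma>\lambda\}}\diff\lambda\diff x\diff\omega=\int_{\R^{2d}}\Bigl(1-\frac{\varepsilon}{\sigma}\Bigr)_+\diff x\diff\omega .
\]
Similarly, $2J_3(\varepsilon)=\int(\varepsilon^{-2}-\sigma^{-2})\mathbbm 1_{\{\sigma>\varepsilon\}}$, so
\[
2J_3-\frac{J_2}{\varepsilon}=\int_{\R^{2d}}\Bigl(\frac1{\varepsilon^2}-\frac1{\sigma^2}-\frac1{\varepsilon^2}+\frac1{\varepsilon\sigma}\Bigr)_{\sigma>\varepsilon}=\int_{\R^{2d}}\frac{1}{\sigma\varepsilon}\Bigl(1-\frac{\varepsilon}{\sigma}\Bigr)_+\diff x\diff\omega .
\]
Substituting these into the relations of Step 2 yields \eqref{eq:main-res-risk-1-corr}--\eqref{eq:main-res-risk-2-corr}. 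Here $\varepsilon_\kappa$ is interpreted as $\tilde\varepsilon_\kappa$, which is legitimate by the equivalence established in Step 2.
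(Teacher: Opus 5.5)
Your proof is correct and shares the paper's skeleton. You replace the type-$\tau$ integrals of the counting function by $J_\tau(\varepsilon)=\int_\varepsilon^\infty V_\sigma(\lambda)\lambda^{-\tau}\,d\lambda$, insert them into the ellipsoid minimax-risk characterization of \cite[Theorem~4]{allard2025metricentropyminimaxrisk}, and convert to phase-space integrals by Tonelli; your layer-cake identities are exactly the paper's.

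Where you differ is in how $I_\tau\sim J_\tau$ is obtained. You apply the Weyl law $M_{|T_\sigma|}\sim V_\sigma$ directly inside each divergent integral. The paper instead starts from the entropy asymptotics of Theorem~\ref{thm:main-result}, uses \cite[Theorem~2]{allard2025metricentropyminimaxrisk} to get $I_1\sim J_1$, and then reaches $I_2$ and $I_3$ by integration by parts and Karamata. Your route is shorter and bypasses the entropy altogether. However, it only covers the Weyl quantization, since that is where the Dauge--Robert asymptotics are available. The paper's detour through Theorem~\ref{thm:main-result} is meant to make the statement quantization-independent. Even so, applying \cite[Theorem~2]{allard2025metricentropyminimaxrisk} to a left or right quantization still needs condition (RC) for that operator's own counting function. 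The cleanest way to extend your argument would therefore be a singular-value analogue of Lemma~\ref{lem:technical-lemma}, giving $M_{|T|}\sim V_\sigma$ for those quantizations as well.

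Conversely, your Step~2 supplies a justification the paper omits. The paper simply substitutes the $V_\sigma$-integrals into both the risk formula and the critical-radius equation. You check that $2I_3-I_2/\varepsilon\sim 2J_3-J_2/\varepsilon$, because the leading constant $\alpha/((\alpha+1)(\alpha+2))$ is strictly positive. You then use asymptotic inversion of a regularly varying function of index $-\alpha-2\neq 0$, together with the uniform convergence theorem, to show that the two critical radii, and hence the two risk expressions, are asymptotically equivalent. That is exactly what is needed to define $\varepsilon_\kappa$ through the volume function as the statement does.
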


\begin{proof}
See Section~\ref{sec:proof-MR-Main}.
\end{proof}

\section{Applications to Sobolev Spaces}\label{sec:appli-sobolev-spaces}

We now apply the results developed in the preceding sections to characterize the
metric entropy of unit balls in Sobolev spaces on $\R^d$. This extends the
corresponding analysis for Sobolev spaces on bounded domains presented in
\cite[Section~3.2]{ab2025compactoperators}. In contrast to the bounded-domain
setting, the unit ball of a classical Sobolev space on $\R^d$—as defined, for
example, in \cite[Definition~3.1]{Taylor+1981} or
\cite[Section~9.1]{brezisFunctionalAnalysisSobolev2011}—is not compact with
respect to the $L^2(\R^d)$ metric due to translation invariance
(cf.\ \cite[Example~6.11]{adamsSobolevSpaces2003}). As a result, its metric
entropy is infinite for every $\varepsilon>0$.

Compactness-restoring variants of Sobolev spaces on $\R^d$, such as those
involving confining potentials or weights, are classical and have been studied
extensively from the perspectives of functional analysis and spectral theory.

By modifying the Sobolev structure so as to restore compactness while
preserving the underlying notion of regularity, one obtains function classes
that fall within the scope of the present theory. Two such modifications are
particularly natural and widely used in practice: Sobolev spaces with a
confining potential (Theorem~\ref{thm:sob-sch}) and weighted Sobolev spaces
(Theorem~\ref{thm:sob-weighted}). In both cases, the resulting spaces admit
compact embeddings into $L^2(\R^d)$ and can be treated directly within our framework.


We begin with Sobolev spaces equipped with a confining potential.
Fix $s>0$, let $u\colon\R^d\to\R_+^*$ be a potential function, and define
\begin{equation}\label{eq:introduce-operator-sch}
    T_{\mathcal{S}}
    =
    (I+U-\Delta)^{s/2},
\end{equation}
where $I$ is the identity operator, $U$ acts by pointwise multiplication with $u(\cdot)$,
and $-\Delta$ is the Laplacian operator.
Throughout, $T_{\mathcal{S}}$ is understood as the self-adjoint operator on
$L^2(\R^d)$ obtained as the closure of its action on $C_0^\infty(\R^d)$.
We note that $I+U-\Delta$ is a Schrödinger operator; see, for example,
\cite[Chapter~11.2]{lieb2001analysis}.

When $s$ is an even integer, say $s/2=k\in\N^*$, the operator $T_{\mathcal{S}}$
coincides with the $k$-fold composition of $I+U-\Delta$ with itself and is a
differential operator of order $2k$. Its principal Kohn--Nirenberg symbol is
\begin{equation}\label{eq:KN-symbol-sch}
    \sigma_{\mathcal{S}}(x,\omega)
    =
    \bigl(1+u(x)+(2\pi\|\omega\|_2)^2\bigr)^{s/2}.
\end{equation}
For general $s>0$, the operator $T_{\mathcal{S}}$ is defined via functional
calculus and is a pseudodifferential operator whose principal Kohn--Nirenberg
symbol is given by \eqref{eq:KN-symbol-sch}; see, for instance,
\cite[Definition~3.13]{hinzmicrolocal}. As a concrete and widely studied example, we work with the quadratic
potential (cf.\ \cite[Section~25.3]{shubin_pseudodifferential_2001}),
\[
    u(x)=c\,\|x\|_2^2,
    \qquad x\in\R^d,
\]
with a fixed constant $c>0$.

The Sobolev space of order $s$ induced by $T_{\mathcal{S}}$ is defined as
\[
\mathcal{H}_{\mathcal{S}}^{(s)}
\;\coloneqq\;
\overline{C_0^\infty(\R^d)}^{\,\|\cdot\|_{\mathcal{S}}^{(s)}},
\qquad
\|\varphi\|_{\mathcal{S}}^{(s)}
\coloneqq
\|T_{\mathcal{S}}\varphi\|_{L^2(\R^d)},
\quad \varphi\in C_0^\infty(\R^d),
\]
where the closure is taken with respect to the topology induced by
$\|\cdot\|_{\mathcal{S}}^{(s)}$, so that
$\mathcal{H}_{\mathcal{S}}^{(s)}$ embeds continuously into $L^2(\R^d)$.
Equivalently, $\mathcal{H}_{\mathcal{S}}^{(s)}$ coincides with the domain of the
operator $T_{\mathcal{S}}$ on $L^2(\R^d)$, endowed with the norm
\[
\|f\|_{\mathcal{S}}^{(s)}=\|T_{\mathcal{S}}f\|_{L^2(\R^d)}.
\]
In this sense, $\mathcal{H}_{\mathcal{S}}^{(s)}$ may be viewed as a Sobolev space
of order $s$ with a confining potential, encoding $s$ degrees of smoothness
together with spatial localization.
The corresponding unit ball is
\begin{equation}\label{eq:definition-ballsch}
    \mathcal{B}_{\mathcal{S}}^{(s)}
    \coloneqq
    \bigl\{f\in\mathcal{H}_{\mathcal{S}}^{(s)}:
    \|T_{\mathcal{S}}f\|_{L^2(\R^d)}\le 1\bigr\}.
\end{equation}

We are now in a position to characterize the metric entropy of
$\mathcal{B}_{\mathcal{S}}^{(s)}$.

\begin{theorem}\label{thm:sob-sch}
Let $d\in\N^*$ and $s,c>0$.
The metric entropy of $\mathcal{B}_{\mathcal{S}}^{(s)}$ obeys
\begin{equation}\label{eq:pinsker-entropy}
    H\!\left(\varepsilon;\mathcal{B}_{\mathcal{S}}^{(s)}\right)
    \sim
    \frac{s\,\omega_{2d}}{2d(2\pi\sqrt{c})^{d}}\,
    \varepsilon^{-2d/s},
    \qquad \varepsilon\to0,
\end{equation}
and the corresponding minimax risk admits the asymptotic expansion
\begin{equation}\label{eq:pinsker-unbounded}
    R_\kappa \left(\mathcal{B}_{\mathcal{S}}^{(s)}\right)
    \sim
    \frac{d+s}{d}
    \left(
        \frac{d\,s\,\omega_{2d}\,\kappa^2}
        {(2\pi\sqrt{c})^{d}(d+s)(2d+s)}
    \right)^{\frac{s}{d+s}},
    \qquad \kappa\to0.
\end{equation}
\end{theorem}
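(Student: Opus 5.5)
The plan is to realize $\mathcal{B}_{\mathcal{S}}^{(s)}$ as the image of a compact operator and then apply Theorem~\ref{thm:main-result} and Corollary~\ref{cor:MR-Main}.

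\textbf{Reduction to an operator.} Since $T_{\mathcal{S}}=(I+U-\Delta)^{s/2}$ is self-adjoint with $T_{\mathcal{S}}\ge I$, it is bijective from $\mathcal{H}_{\mathcal{S}}^{(s)}$ onto $L^2(\R^d)$. Hence
\[
\mathcal{B}_{\mathcal{S}}^{(s)}=T_{\mathcal{S}}^{-1}\bigl(\mathcal{B}_{L^2(\R^d)}\bigr).
\]
Consequently $H(\varepsilon;\mathcal{B}_{\mathcal{S}}^{(s)})=H_{T_{\mathcal{S}}^{-1}}(\varepsilon)$, and likewise for the minimax risk. The operator $T_{\mathcal{S}}^{-1}=(I+U-\Delta)^{-s/2}$ is positive, self-adjoint and compact, because it has discrete spectrum with eigenvalues tending to zero.

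\textbf{Symbol-level identification.} Put
\[
\sigma(x,\omega)=\bigl(1+c\|x\|_2^2+4\pi^2\|\omega\|_2^2\bigr)^{-s/2}.
\]
This is a strictly positive symbol in $\hyponegclass(\R^{2d})$: it is a negative power of the Shubin weight $(1+\|x\|_2^2+\|\omega\|_2^2)^{1/2}$, and its derivatives gain the corresponding decay. Its inverse $\sigma^{-1}$ is the principal symbol of $T_{\mathcal{S}}$.

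Two routes are available, and I would take the first.
\begin{itemize}[label=(\roman*)]
\item[(i)] Invoke \eqref{eq:main-result-part2}. For this I must check that $T_{\mathcal{S}}$ differs from $T_{\sigma^{-1}}$ only by a lower-order Shubin-class term that leaves the volume asymptotics unchanged.
\item[(ii)] Use the functional calculus in the Shubin class. The full symbol of $T_{\mathcal{S}}^{-1}$ is $\sigma$ plus a remainder of order $-s-2$, and this remainder is $o(\sigma)$ at infinity. Therefore its volume function is asymptotically equivalent to $V_\sigma$, and the first part of Theorem~\ref{thm:main-result} applies directly.
\end{itemize}
The main obstacle is precisely this step: justifying that the lower-order corrections, together with the invertibility requirement on $T_{\sigma^{-1}}$, do not affect the leading asymptotics. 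A fallback is to use the explicit harmonic-oscillator spectrum of $-\Delta+c\|x\|_2^2$, namely $\sqrt{c}\,2\pi(2|n|+d)$ with suitable normalization, to check $M\sim V_\sigma$ directly.

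\textbf{Computation of the volume and of the entropy.} For $\lambda<1$, the set $\{\sigma>\lambda\}$ is the ellipsoid
\[
\bigl\{\,c\|x\|_2^2+4\pi^2\|\omega\|_2^2<\lambda^{-2/s}-1\,\bigr\}.
\]
Its volume is
\[
V_\sigma(\lambda)=A\,\bigl(\lambda^{-2/s}-1\bigr)^d,\qquad A=\frac{\omega_{2d}}{(2\pi\sqrt c)^d}.
\]
Thus $V_\sigma(\lambda)\sim A\lambda^{-a}$ with $a=2d/s$, which is regularly varying with negative index. Theorem~\ref{thm:main-result} then gives
\[
H\sim\int_\varepsilon^{1}\frac{V_\sigma(\lambda)}{\lambda}\,\diff\lambda\sim \frac{A}{a}\,\varepsilon^{-a}.
\]
This is exactly \eqref{eq:pinsker-entropy}.

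\textbf{Minimax risk.} By Corollary~\ref{cor:MR-Main}, or equivalently \eqref{eq:integral-ecf-risk}--\eqref{eq:def-critical-radius} with $M\sim V_\sigma$, we have
\[
I_2(\varepsilon)\sim \frac{A\,\varepsilon^{-a-1}}{a+1},\qquad I_3(\varepsilon)\sim \frac{A\,\varepsilon^{-a-2}}{a+2}.
\]
The critical-radius equation \eqref{eq:def-critical-radius} then becomes
\[
\kappa^2 A\,\frac{a}{(a+1)(a+2)}\,\varepsilon_\kappa^{-a-2}\sim1,
\]
and the risk is
\[
R\sim \kappa^2\,\varepsilon_\kappa I_2(\varepsilon_\kappa)=\frac{\kappa^2 A}{a+1}\,\varepsilon_\kappa^{-a}.
\]
Eliminating $\varepsilon_\kappa$ gives $R\sim \frac{a+2}{a}\bigl(\kappa^2 A\,\frac{a}{(a+1)(a+2)}\bigr)^{2/(a+2)}$. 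To finish, I substitute $a=2d/s$, so that $\frac{a+2}{a}=\frac{d+s}{d}$, $\frac{2}{a+2}=\frac{s}{d+s}$ and $\frac{a}{(a+1)(a+2)}=\frac{ds}{(d+s)(2d+s)}$. This yields \eqref{eq:pinsker-unbounded}.

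Passing from $M\sim V$ to the asymptotics of the integrals $I_2$ and $I_3$ is justified by regular variation, via Karamata's theorem.
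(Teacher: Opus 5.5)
Your proposal follows the paper's proof. The paper also writes $\mathcal{B}_{\mathcal{S}}^{(s)}=T_{\mathcal S}^{-1}\mathcal B_2$, though it gets invertibility and compactness from Shubin's theorems rather than from $T_{\mathcal S}\ge I$. It computes the same ellipsoid volume $\frac{\omega_{2d}}{(2\pi\sqrt c)^d}(\lambda^{-2/s}-1)^d$ and chains \eqref{eq:main-result-part2} with \eqref{eq:main-res-volume-integral}. For the risk, the paper quotes the entropy-to-risk conversion of \cite[Appendix~A, Table~(iv)]{allard2025metricentropyminimaxrisk}. Your explicit computation of $I_2$, $I_3$ and $\varepsilon_\kappa$ reproduces that formula exactly.

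\textbf{The step you flag.} The paper passes over it by tacitly identifying $T_{\mathcal S}$ with the quantization of $\sigma^{-1}$. You can close it with the paper's own change-of-quantization argument. Suppose the Weyl symbol of $T_{\mathcal S}^{-1}$ is $\sigma+r$ with $r\in\Gamma^{-s-\delta}_1(\R^{2d})$ for some $\delta>0$; your route (ii) asserts $\delta=2$. Let $T_\sigma B=I+S$ be a right parametrix with $S\in\Psi^{-\infty}(\R^d)$. Then
\[
T_{\mathcal S}^{-1}=T_\sigma\bigl(I+B(T_{\mathcal S}^{-1}-T_\sigma)\bigr)-S(T_{\mathcal S}^{-1}-T_\sigma),
\]
with $B(T_{\mathcal S}^{-1}-T_\sigma)\in\Psi^{-\delta}_1(\R^d)$. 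Lemma~\ref{lem:technical-lemma} then gives $H_{T_{\mathcal S}^{-1}}\sim H_\sigma$, and no invertibility of $T_{\sigma^{-1}}$ is needed.

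\textbf{The minimax step.} This argument transfers only entropy. Corollary~\ref{cor:MR-Main} is a statement about $T_\sigma$, not about $T_{\mathcal S}^{-1}$. To use \eqref{eq:integral-ecf-risk}--\eqref{eq:def-critical-radius} you need $M_{T_{\mathcal S}^{-1}}\sim V_\sigma$ for the operator itself, or else the paper's entropy-to-risk conversion.

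\textbf{The fallback route.} Your fallback supplies exactly this and is the most elementary rigorous route. The operator $T_{\mathcal S}^{-1}=(I-\Delta+U)^{-s/2}$ is a function of the harmonic oscillator, whose eigenvalues are $\sqrt c\,(2|n|+d)$, $n\in\N^d$. There is no factor $2\pi$: with the standard Laplacian, the $2\pi$ appears only in the $\omega$-variable of the symbol. Hence
\[
M_{T_{\mathcal S}^{-1}}(\lambda)\sim \frac{\lambda^{-2d/s}}{d!\,(2\sqrt c)^d}=\frac{\omega_{2d}}{(2\pi\sqrt c)^d}\,\lambda^{-2d/s}.
\]
Then \eqref{eq:integral-ecf-risk}--\eqref{eq:integral-ecf-entropy} give both asymptotics directly, without Dauge--Robert or the lemma. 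The paper's symbol-level route is heavier here, but it does not depend on an explicit spectrum, which is why it carries over to Theorem~\ref{thm:sob-weighted}.
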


\begin{proof}
    See Section~\ref{sec:proof-thm-sob-sch}.
\end{proof}

\noindent
Theorem~\ref{thm:sob-sch}, in particular the minimax-risk asymptotic
\eqref{eq:pinsker-unbounded}, extends Pinsker’s theorem to Sobolev spaces on
unbounded domains. Pinsker’s original result was established in
\cite{pinsker1980optimal}; general reviews can be found in
\cite{nussbaum1999minimax,johnstone2019estimation,tsybakovIntroductionNonparametricEstimation2009},
and an extension to arbitrary bounded domains is provided in
\cite[Theorem~9]{allard2025metricentropyminimaxrisk}. The result obtained here
shows that Pinsker’s principle is not confined to bounded domains: after
restoring compactness through spatial confinement, the minimax risk remains
governed by the same structural mechanism identified by Pinsker, with the sharp
asymptotic constant now determined by phase-space volume rather than boundary
effects. This indicates that Pinsker’s theorem reflects an underlying geometric
principle of statistical estimation, rather than an artifact of bounded domains
or specific boundary conditions.


As a second application, we consider weighted Sobolev spaces, defined following
\cite[Section~3.3]{hinzmicrolocal}.
Fix $s,r,c>0$, set $u(x)=c\|x\|_2^2$ for $x\in\R^d$, and let
\begin{equation*}
    T_{\mathcal{W}}
    \coloneqq
    (I-\Delta)^{s/2}(I+U)^{r/2},
\end{equation*}
where $I$ is the identity operator, $U$ acts by pointwise multiplication with $u(\cdot)$,
and $-\Delta$ is the Laplacian operator.
As before, $T_{\mathcal{W}}$ is interpreted as a pseudodifferential operator; in
this case, its principal Kohn--Nirenberg symbol is given by
\begin{equation}\label{eq:symb-sob-weight-inverse}
    \sigma_{\mathcal{W}}(x,\omega)
    =
    \bigl(1+(2\pi\|\omega\|_2)^2\bigr)^{s/2}
    \bigl(1+c\|x\|_2^2\bigr)^{r/2}.
\end{equation}


The weighted Sobolev space of regularity $s$ and weight exponent $r$ induced by
$T_{\mathcal{W}}$ is defined as
\[
\mathcal{H}_{\mathcal{W}}^{(s,r)}
\;\coloneqq\;
\overline{C_0^\infty(\R^d)}^{\,\|\cdot\|_{\mathcal{W}}^{(s,r)}},
\qquad
\|\varphi\|_{\mathcal{W}}^{(s,r)}
\coloneqq
\|T_{\mathcal{W}}\varphi\|_{L^2(\R^d)},
\quad \varphi\in C_0^\infty(\R^d),
\]
where the closure is taken with respect to the topology induced by
$\|\cdot\|_{\mathcal{W}}^{(s,r)}$, so that
$\mathcal{H}_{\mathcal{W}}^{(s,r)}$ embeds continuously into $L^2(\R^d)$.
Equivalently, $\mathcal{H}_{\mathcal{W}}^{(s,r)}$ coincides with the domain of the
operator $T_{\mathcal{W}}$ on $L^2(\R^d)$, endowed with the norm
\[
\|f\|_{\mathcal{W}}^{(s,r)}=\|T_{\mathcal{W}}f\|_{L^2(\R^d)}.
\]
In this sense, $\mathcal{H}_{\mathcal{W}}^{(s,r)}$ may be viewed as a weighted
Sobolev space of regularity $s$, where the weight exponent $r$ enforces spatial
localization.
The corresponding unit ball is
\begin{equation}\label{eq:definition-ballweight}
    \mathcal{B}_{\mathcal{W}}^{(s,r)}
    \coloneqq
    \bigl\{f\in\mathcal{H}_{\mathcal{W}}^{(s,r)}:
    \|T_{\mathcal{W}}f\|_{L^2(\R^d)}\le 1\bigr\}.
\end{equation}

We are now ready to characterize the metric entropy of
$\mathcal{B}_{\mathcal{W}}^{(s,r)}$.

\begin{theorem}\label{thm:sob-weighted}
Let $d \in \N^*$ and $s,r,c>0$. 
The metric entropy of $\mathcal{B}_{\mathcal{W}}^{(s,r)}$ obeys
\begin{equation*}
    H \left(\varepsilon;\mathcal{B}_{\mathcal{W}}^{(s,r)}, \|\cdot\|_{L^2(\R^d)}\right)
    \sim \frac{\omega_d^2}{(2\pi\sqrt{c})^{d}}
    \begin{dcases}
        \Xi_{r,s,d}\, \varepsilon^{-\frac{d}{\min\{r,s\}}}, 
        \quad &\text{if } r\neq s,\\[.25cm]           
        \varepsilon^{-\frac{d}{s}}\ln\left(\varepsilon^{-1}\right), \quad &\text{if } r=s,
    \end{dcases}
    \quad  \varepsilon\to 0,
\end{equation*}
where, for $r \neq s$,
\begin{equation}\label{eq:expression-constant-sobw}
     \Xi_{r,s,d}
     \coloneqq \frac{\min\{r,s\} \, \Gamma\left(\frac{d \, |s-r| }{2 \min\{r,s\}}\right) \Gamma \left( \frac{d}{2} \right)}{2\Gamma \left(\frac{d\max\{r,s\} }{2\min\{r,s\} }\right)}, 
\end{equation}
and $\Gamma$ denotes Euler's Gamma function.
\end{theorem}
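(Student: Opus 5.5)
The plan is to realize $\mathcal{B}_{\mathcal{W}}^{(s,r)}$ as the image of the $L^2$ unit ball under the inverse of a pseudodifferential operator, and then apply Theorem~\ref{thm:main-result} and Corollary~\ref{cor:ME-Main}. The volume computation that yields the constants is the concrete part; the membership $\sigma\in\hyponegclass(\R^{2d})$ is the part I expect to be hardest (see the last paragraph).

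\textbf{Operator setup.} Since $(I+U)^{r/2}$ acts by multiplication and $(I-\Delta)^{s/2}$ is a Fourier multiplier, $T_{\mathcal{W}}=(I-\Delta)^{s/2}(I+U)^{r/2}$ is exactly the right quantization of $\sigma_{\mathcal{W}}$ in \eqref{eq:symb-sob-weight-inverse}. Both factors are positive with bounded inverses, so $T_{\mathcal{W}}$ is invertible and
\[
\mathcal{B}_{\mathcal{W}}^{(s,r)}=T_{\mathcal{W}}^{-1}\bigl(\mathcal{B}_{L^2(\R^d)}\bigr).
\]
Set $\sigma\coloneqq\sigma_{\mathcal{W}}^{-1}$, which is strictly positive. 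Then $T_{\sigma^{-1}}=T_{\mathcal{W}}$ in the right quantization. Provided $\sigma\in\hyponegclass(\R^{2d})$, the second part of Theorem~\ref{thm:main-result} (which holds for any of the three quantizations) together with Corollary~\ref{cor:ME-Main} reduces the claim to
\[
H\bigl(\varepsilon;\mathcal{B}_{\mathcal{W}}^{(s,r)}\bigr)\sim H_\sigma(\varepsilon)\sim\int_\varepsilon^\infty\frac{V_\sigma(\lambda)}{\lambda}\,\diff\lambda .
\]
It then remains to compute $V_\sigma$ and check that it is regularly varying with negative index.

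\textbf{Volume asymptotics.} Write $t=1/\lambda$ and substitute $\xi=2\pi\omega$, $y=\sqrt{c}\,x$; this produces the prefactor $(2\pi\sqrt{c})^{-d}$. Then $V_\sigma(\lambda)$ equals $(2\pi\sqrt{c})^{-d}$ times the Lebesgue measure of
\[
\bigl\{(y,\xi)\in\R^{2d}:(1+\|\xi\|_2^2)^{s/2}(1+\|y\|_2^2)^{r/2}<t\bigr\}.
\]
Integrating first in $\xi$ for fixed $y$ gives
\[
\omega_d\int_{\R^d}\Bigl(t^{2/s}(1+\|y\|_2^2)^{-r/s}-1\Bigr)_+^{d/2}\diff y .
\]
The analysis splits into cases.
\begin{itemize}
\item If $r>s$, the integrand is at most $t^{d/s}(1+\|y\|_2^2)^{-rd/(2s)}$, which is integrable because $rd/s>d$. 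Dominated convergence gives $V_\sigma(\lambda)\sim(2\pi\sqrt{c})^{-d}\,\omega_d\,C\,\lambda^{-d/s}$, where
\[
C=\int_{\R^d}(1+\|y\|_2^2)^{-rd/(2s)}\diff y=\frac{\pi^{d/2}\,\Gamma\bigl(\tfrac{d(r-s)}{2s}\bigr)}{\Gamma\bigl(\tfrac{dr}{2s}\bigr)}.
\]
\item If $r<s$, the same argument applies after integrating first in $y$ instead of $\xi$, with the roles of $r$ and $s$ exchanged.
\item If $r=s$, the integrand behaves like $t^{d/s}\|y\|_2^{-d}$ on $1\lesssim\|y\|_2\lesssim t^{1/s}$ and vanishes beyond that range. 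This yields
\[
V_\sigma(\lambda)\sim(2\pi\sqrt{c})^{-d}\,\omega_d^2\,\frac{d}{s}\,\lambda^{-d/s}\ln(1/\lambda).
\]
The lower-order contributions (the $-1$ inside the bracket and the region $\|y\|_2\lesssim1$) will be controlled by elementary bounds.
\end{itemize}
In all three cases $V_\sigma$ is regularly varying at $0$ with index $-d/\min\{r,s\}<0$. Karamata's theorem then gives
\[
\int_\varepsilon^\infty\frac{V_\sigma(\lambda)}{\lambda}\,\diff\lambda\sim\frac{\min\{r,s\}}{d}\,V_\sigma(\varepsilon).
\]
In the case $r>s$, using $\pi^{d/2}=\omega_d\,\Gamma(d/2+1)$ and $\Gamma(d/2+1)=\tfrac d2\Gamma(d/2)$, the constant becomes
\[
\frac{\omega_d^2}{(2\pi\sqrt{c})^{d}}\cdot\frac{s}{2}\,\frac{\Gamma\bigl(\tfrac{d(r-s)}{2s}\bigr)\Gamma\bigl(\tfrac d2\bigr)}{\Gamma\bigl(\tfrac{dr}{2s}\bigr)}=\frac{\omega_d^2}{(2\pi\sqrt{c})^{d}}\,\Xi_{r,s,d},
\]
which matches \eqref{eq:expression-constant-sobw}. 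In the case $r=s$, the factor $\tfrac sd$ cancels the $\tfrac ds$ and leaves $\omega_d^2\,\varepsilon^{-d/s}\ln(\varepsilon^{-1})$, as claimed.

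\textbf{Main obstacle.} The step I expect to be hardest is verifying $\sigma=\sigma_{\mathcal{W}}^{-1}\in\hyponegclass(\R^{2d})$, since the symbol has product form. An $x$-derivative gains only a factor $\langle x\rangle^{-1}$ and an $\omega$-derivative only $\langle\omega\rangle^{-1}$, not the isotropic factor $\langle(x,\omega)\rangle^{-\rho}$.

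I would first try to verify the hypoelliptic estimates in the relative form $|\partial_x^\alpha\partial_\omega^\beta\sigma|\lesssim|\sigma|\,\langle(x,\omega)\rangle^{-\rho(|\alpha|+|\beta|)}$, with upper and lower polynomial bounds. These should follow by exploiting that $\sigma$ itself decays in both variables, so that the bound on $\partial^\alpha_x\partial^\beta_\omega\sigma/\sigma$ can be compared with powers of $\sigma$ for a suitable $\rho\in(0,1]$.

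If this fails, the fallback is to compare the two sides directly. I would sandwich $\sigma_{\mathcal{W}}$ between isotropic symbols, which bound the eigenvalue-counting function above and below by minimax arguments. Because these bounds only give the volume up to constants, the sharp constant would still have to be recovered, for instance by localizing to dyadic regions in $x$ where the symbol is effectively isotropic.
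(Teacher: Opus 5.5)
\textbf{The gap.} The gap is the step you flag yourself, and it is worse than hard: $\sigma=\sigma_{\mathcal W}^{-1}$ is \emph{not} in $\hyponegclass(\R^{2d})$, so your first plan cannot succeed. Indeed, $\partial_{x_1}\sigma/\sigma=-rcx_1/(1+c\|x\|_2^2)$. At $x=(c^{-1/2},0,\dots,0)$ this equals $-r\sqrt{c}/2$ for \emph{every} $\omega$. But \eqref{eq:bound-def-hypoellip} requires the ratio to be $O\bigl((1+\|x\|_2^2+\|\omega\|_2^2)^{-\rho/2}\bigr)$, which tends to $0$ as $\|\omega\|_2\to\infty$. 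No $\rho\in(0,1]$ works. Comparing with powers of $\sigma$ cannot help either, because the defect is in the gain per derivative, not in the size of $\sigma$.

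Hence neither part of Theorem~\ref{thm:main-result} can be invoked: its proof rests on the isotropic weights $\phi=\varphi=(1+\|z\|_2^2)^{\rho/2}$ and on Shubin's isotropic parametrix calculus. The paper's own proof applies Theorem~\ref{thm:main-result} to $\sigma_{\mathcal W}^{-1}$ without checking this hypothesis, so you have identified a real weak point. Your volume computations are correct and reproduce the paper's constants; integrating out $\omega$ first is a bit more direct than the paper's double polar coordinates and Beta-function evaluation. But the proposal as written does not close the gap.

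\textbf{The fallback.} It does not close the gap either. Radial comparison symbols $(1+\|z\|_2^2)^{-m/2}$ bound $\sigma$ from below only if $m\ge r+s$ and from above only if $m\le\min\{r,s\}$. These give volumes of order $\lambda^{-2d/(r+s)}$ and $\lambda^{-2d/\min\{r,s\}}$, neither of which is the true order $\lambda^{-d/\min\{r,s\}}$ (times $\ln(1/\lambda)$ if $r=s$). So you do not get the volume up to constants. Moreover, pointwise symbol inequalities do not give operator inequalities under Weyl quantization without G\aa rding-type errors. Dyadic localization in $x$ amounts to the Birman--Solomyak bracketing method for operators $b(x)a(D)$, which is a full proof rather than a step.

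\textbf{The missing idea.} Bypass $\hyponegclass$ and apply Theorem~\ref{thm:Dauge-Robert} directly with anisotropic (SG-type) weights. Attach $\langle x\rangle=(1+\|x\|_2^2)^{1/2}$ to $x$-derivatives and $\langle\omega\rangle$ to $\omega$-derivatives, and take $w=\sigma^2$. Then:
\begin{itemize}
\item $|\partial_x^\alpha\partial_\omega^\beta\sigma^2|\lesssim\sigma^2\langle x\rangle^{-|\alpha|}\langle\omega\rangle^{-|\beta|}$;
\item (H2) holds because $\langle x\rangle\langle\omega\rangle\gtrsim 1+\|x\|_2+\|\omega\|_2$;
\item (N) holds with $\gamma=1/(2\max\{r,s\})$ and $\gamma'=1/(2\min\{r,s\})$.
\end{itemize}
You also do not need the inversion part of Theorem~\ref{thm:main-result}, because $T_{\mathcal W}^{-1}=(I+U)^{-r/2}(I-\Delta)^{-s/2}$ is exactly the Kohn--Nirenberg quantization of $\sigma_{\mathcal W}^{-1}$.

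To obtain $M_{|T_{\mathcal W}^{-1}|}\sim V_\sigma$, apply Dauge--Robert to the positive operator $T_{\mathcal W}^{-1}(T_{\mathcal W}^{-1})^*=(I+U)^{-r/2}(I-\Delta)^{-s}(I+U)^{-r/2}$. Its Weyl symbol is $\sigma^2\bigl(1+O(\langle x\rangle^{-2}\langle\omega\rangle^{-2})\bigr)$. The relative error vanishes uniformly at infinity, so it does not affect the volume asymptotics. The argument then finishes as in your outline, via (RC) and \cite[Theorem~2]{allard2025metricentropyminimaxrisk}.
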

\color{black}


\begin{proof}
    See Section~\ref{sec:proof-sob-weighted}.
\end{proof}




\noindent
The expression \eqref{eq:expression-constant-sobw} for the constant appearing in
Theorem~\ref{thm:sob-weighted} can often be simplified by exploiting standard
identities for the Gamma function \cite{artin}. 
In particular, when the dimension is even, say $d=2k$ with $k\in\N^*$,
the Gamma factors reduce to factorials and one obtains
\[
\Xi_{r,s,2k}
=
\frac{(k-1)!\,\min\{r,s\}^{\,k+1}}
{2\prod_{j=1}^{k}\bigl(k\max\{r,s\}-j\min\{r,s\}\bigr)},
\qquad r\neq s .
\]
For example, in dimension $d=2$ this yields
\[
\Xi_{r,s,2}
=
\frac{\min\{r,s\}^2}{2\, (\max\{r,s\}-\min\{r,s\})}.
\]

\color{black}

Theorem~\ref{thm:sob-weighted} also yields sharp entropy asymptotics for
compact embeddings between weighted Sobolev spaces defined through the
operators $T_{\mathcal W}$. In particular, for parameters $s_1>s_2$ and
$r_1>r_2$, consider the inclusion map
\[
I \colon \mathcal{H}_{\mathcal W}^{(s_1,r_1)}
\longrightarrow
\mathcal{H}_{\mathcal W}^{(s_2,r_2)} ,
\]
which is compact by Rellich’s theorem
(see \cite[Theorem~2.17 and Exercise~2.12]{hinzmicrolocal}).
The special case treated in Theorem~\ref{thm:sob-weighted}
corresponds to the choice $s_2=r_2=0$. The same analysis as in the proof of Theorem~\ref{thm:sob-weighted} applies to the
embedding operator $I$ with the parameters $(r,s)$ replaced by
$(r_1-r_2,s_1-s_2)$. Consequently, 
\begin{equation*}
    H_I(\varepsilon)
    \sim \frac{\omega_d^2}{(2\pi\sqrt{c})^{d}}
    \begin{dcases}
        \Xi_{r_1-r_2,s_1-s_2,d}\,
        \varepsilon^{-\frac{d}{\min\{r_1-r_2,s_1-s_2\}}},
        & \text{if } r_1+s_2 \neq r_2+s_1,\\[.25cm]
        \varepsilon^{-\frac{d}{r_1-r_2}}\ln \left(\varepsilon^{-1}\right),
        & \text{if } r_1+s_2 = r_2+s_1,
    \end{dcases}
    \qquad \varepsilon\to 0,
\end{equation*}
where the constant $\Xi_{r,s,d}$ is defined as in
Theorem~\ref{thm:sob-weighted}. The order of the entropy numbers for such embeddings has been studied
in a more general setting—for instance for weighted Besov and
Triebel–Lizorkin spaces \cite[Theorem~4.3.2]{edmundsFunctionSpacesEntropy1996}.
However, these results do not identify the leading constants.
The present approach provides these constants explicitly in the weighted
Sobolev setting.

\color{black}

\section{Proofs}

\subsection{Proof of Theorem~\ref{thm:main-result}}\label{sec:proof-main-result}

We begin by recalling that, by definition of $\hyponegclass(\R^{2d})$
(see \eqref{eq:negative-order-def-class}), there exist $\rho\in(0,1]$ and real numbers $m_+,m_-$ with $m_+\ge m_->0$, all of which we fix throughout the proof,
such that $\symbolnot \in \hyposymbclassinv(\R^{2d})$.
Moreover, by \cite[Lemma~25.1,~1)a)]{shubin_pseudodifferential_2001},
$\symbolnot \in \hyposymbclassinv(\R^{2d})$ implies that
$\symbolnot^{-1}\in \hyposymbclass(\R^{2d})$.


To derive the entropy asymptotics, we first reduce the Weyl quantization
$T_\sigma$ to a positive self-adjoint operator.
By the polar decomposition \cite[VIII.3.11]{conway},
there exists a partial isometry $U$ such that
$T_\sigma = U |T_\sigma|$.
Since $U$ acts isometrically on the range of $|T_\sigma|$,
it maps the image of the unit ball under $|T_\sigma|$
onto the image of the unit ball under $T_\sigma$.
Consequently,
\begin{equation}\label{eq:equality-entropies-absolute-value}
    H_{T_\sigma}(\varepsilon)
    =
    H_{|T_\sigma|}(\varepsilon),
    \qquad \varepsilon>0.
\end{equation}

Our strategy is to invoke the spectral asymptotics of
Dauge and Robert stated in Theorem~\ref{thm:Dauge-Robert}.
To this end, we choose the weight functions
\begin{equation}\label{eq:weight-choice-hypo}
    \phi(z)
    = \varphi(z)
    = \bigl(1+\|z\|_2^2\bigr)^{\rho/2},
    \qquad
    w(z)=\sigma(z),
    \qquad z\in\R^{2d}.
\end{equation}
Once it is verified that this choice satisfies conditions
\emph{(H1)}, \emph{(H2)}, \emph{(W)}, and \emph{(N)}, and that
$\sigma\in\Gamma(\R^{2d};w,\phi,\varphi)$,
Theorem~\ref{thm:Dauge-Robert} yields
\begin{equation}\label{eq:spec-asymptotics}
M_{|T_\sigma|}(\lambda)
\sim
V_\sigma(\lambda),
\qquad \lambda\to0.
\end{equation}
Since $w=\sigma$, the assumption
$V_w(\lambda)=O(V_\sigma(\lambda))$ required in
Theorem~\ref{thm:Dauge-Robert} holds trivially.
Because $V_\sigma$ is assumed to be regularly varying at zero,
it follows that $M_{|T_\sigma|}$ is regularly varying at zero as well.

Since $|T_\sigma|$ is a positive compact operator,
it admits an orthonormal eigenbasis
$\{e_n\}_{n\in\N^*}$ with corresponding eigenvalues
$\{\lambda_n\}_{n\in\N^*}$ listed in nonincreasing order
and converging to zero.
In this basis, the image of the unit ball under $|T_\sigma|$
is the ellipsoid
\[
\mathcal E_\lambda
=
\Bigl\{
x=\sum_{n=1}^\infty x_n e_n :
\sum_{n=1}^\infty \frac{x_n^2}{\lambda_n^2}\le 1
\Bigr\}.
\]
The semi-axis-counting function of the ellipsoid $\mathcal E_\lambda$
coincides with the eigenvalue-counting function $M_{|T_\sigma|}(\lambda)$ of $|T_\sigma|$.
Hence, the entropy $H_{|T_\sigma|}(\varepsilon)$ is precisely the
metric entropy of the ellipsoid $\mathcal E_\lambda$.
Since $M_{|T_\sigma|}(\lambda)\sim V_\sigma(\lambda)$ as
$\lambda\to0$ and $V_\sigma$ is regularly varying at zero,
the regularity condition \emph{(RC)} required in
\cite[Theorem~2]{allard2025metricentropyminimaxrisk}
is satisfied (cf. the discussion following \cite[Lemma~10]{allard2025metricentropyminimaxrisk}).
We may therefore apply \cite[Theorem~2]{allard2025metricentropyminimaxrisk}
to conclude that
\[
H_{|T_\sigma|}(\varepsilon)
\sim
\int_{\varepsilon}^{\infty}
\frac{M_{|T_\sigma|}(\lambda)}{\lambda}\,d\lambda,
\qquad \varepsilon\to0.
\]
Combining this with \eqref{eq:equality-entropies-absolute-value} and \eqref{eq:spec-asymptotics} yields
\begin{equation}\label{eq:mainr-result-for-Weyl-1}
H_{T_\sigma}(\varepsilon)
=
H_{|T_\sigma|}(\varepsilon)
\sim
\int_{\varepsilon}^{\infty}
\frac{V_\sigma(\lambda)}{\lambda}\,d\lambda,
\qquad \varepsilon\to0,
\end{equation}
which is precisely \eqref{eq:main-res-volume-integral}.
It remains to verify conditions \emph{(H1)}–\emph{(N)}
for the choice \eqref{eq:weight-choice-hypo}.

\color{black}
\textbf{Verifying (H1).}
To establish that $\phi^{-1}$ and $\varphi^{-1}$ are $(\phi,\varphi)$-continuous,
we start from the elementary inequality
\[
1+(t_1+t_2)^2 \le \bigl((1+t_1^2)^{1/2}+t_2\bigr)^2,
\qquad t_1,t_2\ge 0,
\]
which follows by direct expansion. Applying this with
$t_1=\|z\|_2$ and $t_2=\|z'\|_2$, and using the triangle inequality
$\|z+z'\|_2 \le \|z\|_2+\|z'\|_2$, we obtain
\begin{equation}\label{eq:verifyH1-DR-cont-1}
\bigl(1+\|z+z'\|_2^2\bigr)^{1/2}
\le
\bigl(1+\|z\|_2^2\bigr)^{1/2}+\|z'\|_2,
\qquad z,z'\in\R^{2d}.
\end{equation}
Replacing \(z\) by \(z+z'\) and \(z'\) by \(-z'\) in
\eqref{eq:verifyH1-DR-cont-1}, yields the
corresponding lower bound
\begin{equation}\label{eq:verifyH1-DR-cont-2}
\bigl(1+\|z+z'\|_2^2\bigr)^{1/2}
\ge
\bigl(1+\|z\|_2^2\bigr)^{1/2}-\|z'\|_2,
\qquad z,z'\in\R^{2d}.
\end{equation}
Fix $c>0$ and take $x,x',\omega,\omega'\in\R^d$ such that
\[
\frac{\|x'\|_2}{\varphi(x,\omega)}+\frac{\|\omega'\|_2}{\phi(x,\omega)}\le c.
\]
Setting $z=(x,\omega)$ and $z'=(x',\omega')$, and using $\phi(z)=\varphi(z)=(1+\|z\|_2^2)^{\rho/2}$, we obtain
\[
\|z'\|_2
\le \|x'\|_2+\|\omega'\|_2
\le c\,(1+\|z\|_2^2)^{\rho/2}
\le c\,(1+\|z\|_2^2)^{1/2},
\]
since $\rho\le 1$. Combining this with
\eqref{eq:verifyH1-DR-cont-1}–\eqref{eq:verifyH1-DR-cont-2} yields
\[
1-c
\le
\frac{(1+\|z+z'\|_2^2)^{1/2}}{(1+\|z\|_2^2)^{1/2}}
\le
1+c.
\]
Raising both sides to the power $\rho$ and choosing $c=1/2$ shows that
\[
C^{-1}
\le
\frac{\phi^{-1}(z+z')}{\phi^{-1}(z)}
\le
C,
\qquad C=2^\rho,
\]
which proves that $\phi^{-1}$ is $(\phi,\varphi)$-continuous. Since $\phi=\varphi$ by \eqref{eq:weight-choice-hypo}, we have $\phi^{-1}=\varphi^{-1}$.
Therefore, the same estimate holds for $\varphi^{-1}$, and both functions are
$(\phi,\varphi)$-continuous.

We next verify that $\phi^{-1}$ and $\varphi^{-1}$ are $(1,1)$-temperate.
By Peetre’s inequality (see \cite[(2.21)]{treves_introduction_1980}),
\begin{equation}\label{eq:Peetre}
(1+\|z\|_2^2)^{1/2}
\le
\sqrt{2}\,(1+\|z'\|_2^2)^{1/2}(1+\|z+z'\|_2^2)^{1/2},
\qquad z,z'\in\R^{2d}.
\end{equation}
Rewriting \eqref{eq:Peetre} gives
\begin{equation}\label{eq:DR-HA-hypo-tempstep}
(1+\|z+z'\|_2^2)^{-\rho/2}
\le
2^{\rho/2}\,
(1+\|z\|_2^2)^{-\rho/2}
(1+\|z'\|_2^2)^{\rho/2}.
\end{equation}
For $z=(x,\omega)$ and $z'=(x',\omega')$, using
$1+\|z'\|_2^2\le (1+\|x'\|_2+\|\omega'\|_2)^2$, we obtain
\[
\phi^{-1}(x+x',\omega+\omega')
\le
2^{\rho/2}\,\phi^{-1}(x,\omega)\,
\bigl(1+\|x'\|_2+\|\omega'\|_2\bigr)^\rho.
\]
Since $\phi=\varphi$ by \eqref{eq:weight-choice-hypo}, we have $\phi^{-1}=\varphi^{-1}$.
Hence the same bound holds for $\varphi^{-1}$, and both functions are
$(1,1)$-temperate.

Finally, since $\rho>0$, we have $\phi^{-1}(z)=\varphi^{-1}(z)\le 1$ for all
$z\in\R^{2d}$, and hence both functions are bounded on $\R^{2d}$.
Together with the $(\phi,\varphi)$-continuity and $(1,1)$-temperateness
established above, this verifies condition \emph{(H1)}.

\textbf{Verifying (H2).}
To verify condition \emph{(H2)}, we start from the elementary bound $2t \le 1+t^2$, for $t\ge 0$.
Applying this estimate first with $t=\|x\|_2$ and then with $t=\|\omega\|_2$, where
$x,\omega\in\R^d$, and adding the resulting inequalities, yields
\[
1+\|x\|_2+\|\omega\|_2
\le
2\bigl(1+\|x\|_2^2+\|\omega\|_2^2\bigr),
\qquad x,\omega\in\R^d.
\]
Recalling that $\phi(x,\omega)=\varphi(x,\omega)
=(1+\|x\|_2^2+\|\omega\|_2^2)^{\rho/2}$, this bound can be rewritten as
\[
1+\|x\|_2+\|\omega\|_2
\le
2\bigl(\phi(x,\omega)\,\varphi(x,\omega)\bigr)^{1/\rho},
\qquad x,\omega\in\R^d.
\]
This is exactly condition~\eqref{eq:DR-H2} with $C=2^{-\rho}$ and $\zeta=\rho$.


\textbf{Verifying (W).}
By hypoellipticity of $\sigma$ (recall \eqref{eq:bound-def-hypoellip}), there exists
$R>0$ such that, for every multi-index $\alpha\in\N^{2d}$, the bound
\begin{equation}\label{eq:verify-W-hypo-DR-11}
    \bigl|\partial^\alpha \sigma(z)\bigr|
    \le
    C_\alpha\,\sigma(z)
    \bigl(1+\|z\|_2^2\bigr)^{-\frac{\rho|\alpha|}{2}}
\end{equation}
holds for all $z\in\R^{2d}$ with $\|z\|_2\ge R$, for some constant $C_\alpha>0$
depending on $\alpha$.
For fixed $\alpha$, the function
$z\mapsto \sigma(z)\bigl(1+\|z\|_2^2\bigr)^{-\frac{\rho|\alpha|}{2}}$ is continuous
and strictly positive, while $z\mapsto \partial^\alpha\sigma(z)$ is continuous
since $\sigma\in C^\infty(\R^{2d})$.
As the closed ball $\{z\in\R^{2d}:\|z\|_2\le R\}$ is compact, there exists a
constant $C'_\alpha>0$ such that
\begin{equation}\label{eq:verify-W-hypo-DR-12}
    \bigl|\partial^\alpha \sigma(z)\bigr|
    \le
    C'_\alpha\,\sigma(z)
    \bigl(1+\|z\|_2^2\bigr)^{-\frac{\rho|\alpha|}{2}},
    \qquad
    \|z\|_2 \le R.
\end{equation}
Combining \eqref{eq:verify-W-hypo-DR-11} and \eqref{eq:verify-W-hypo-DR-12}, and
recalling that $w=\sigma$ and $\phi=\varphi=(1+\|\cdot\|_2^2)^{\rho/2}$, we obtain,
for all $x,\omega\in\R^d$ and all multi-indices $\alpha,\beta\in\N^d$,
\begin{align}
    \bigl|\partial_x^\alpha \partial_\omega^\beta w(x,\omega)\bigr|
    &\le
    C_{\alpha,\beta}\,w(x,\omega)
    \bigl(1+\|(x,\omega)\|_2^2\bigr)^{-\frac{\rho(|\alpha|+|\beta|)}{2}}
    \label{eq:verify-W-hypo-1}\\
    &=
    C_{\alpha,\beta}
        \frac{w(x,\omega)}
        {\phi(x,\omega)^{|\alpha|}\,\varphi(x,\omega)^{|\beta|}}
    .
    \label{eq:verify-W-hypo-2}
\end{align}
This establishes condition~\eqref{eq:weight-class-DR} and hence shows that
$w\in\Gamma(\R^{2d};w,\phi,\varphi)$, as required for \emph{(W)}.

We show that the weight $w=\sigma$ is $(\phi,\varphi)$-temperate by a case
distinction on $z'$.
Fix $z,z'\in\R^{2d}$.

\medskip
\noindent\emph{Case 1:} 
    $\|z'\|_2 \leq \|z\|_2^\rho /2$.
Define the function
\[
    g \colon [0,1] \to \R,
    \qquad
    g(t) \coloneqq \ln\bigl(\sigma(z+t z')\bigr).
\]
Then
\begin{equation}\label{eq:proof-weight-temperate-DR1}
    g'(t)
    =
    \frac{\nabla \sigma(z+t z') \cdot z'}{\sigma(z+t z')},
    \qquad t\in[0,1].
\end{equation}
By hypoellipticity of $\sigma$, applying
\eqref{eq:verify-W-hypo-DR-11}--\eqref{eq:verify-W-hypo-DR-12} with $|\alpha|=1$
yields bounds on each first-order partial derivative of $\sigma$.
Combining the componentwise bounds corresponding to $|\alpha|=1$, there exists a
constant $K>0$ such that
\begin{equation}\label{eq:proof-weight-temperate-DR2}
    \|\nabla \sigma(z+t z')\|_2
    \le
    K\,\sigma(z+t z')
    \bigl(1+\|z+t z'\|_2^2\bigr)^{-\rho/2},
    \qquad t\in[0,1].
\end{equation}
Substituting this estimate into \eqref{eq:proof-weight-temperate-DR1} and applying
the Cauchy-Schwarz inequality yields
\begin{equation}\label{eq:proof-weight-temperate-DR3}
    |g'(t)|
    \le
    K\,\|z'\|_2\,\bigl(1+\|z+t z'\|_2^2\bigr)^{-\rho/2},
    \qquad t\in[0,1].
\end{equation}
Under the restriction of this case, $\|z'\|_2 \le \|z\|_2^\rho/2$, and assuming
$\|z\|_2\ge 1$, we estimate
\[
\|z+t z'\|_2
\ge
\|z\|_2 - t\|z'\|_2
\ge
\|z\|_2 - \tfrac12 \|z\|_2^\rho
\ge
\tfrac12 \|z\|_2,
\qquad t\in[0,1],
\]
where the last inequality uses $\rho\le 1$.
Inserting this bound into \eqref{eq:proof-weight-temperate-DR3} shows that the
right-hand side is uniformly bounded for $t\in[0,1]$. Hence, there exists a
constant $K_0>0$ such that
\begin{equation}\label{eq:g-uniform-bound}
    |g'(t)| \le K_0,
    \qquad t\in[0,1].
\end{equation}
If $\|z\|_2 < 1$, then the assumption of this case implies
\(
\|z'\|_2 \le \tfrac12.
\)
Consequently, for all $t\in[0,1]$, the points $z+t z'$ remain in a fixed compact
subset of $\R^{2d}$.
Since $\sigma\in C^\infty(\R^{2d})$ is strictly positive, the function
$\nabla\sigma/\sigma$ is continuous and therefore bounded on this set.
It follows that $|g'(t)|$ is uniformly bounded for all $t\in[0,1]$, 
which yields \eqref{eq:g-uniform-bound}.
Integrating \eqref{eq:g-uniform-bound}
over the interval $[0,1]$ 
gives
\begin{equation}\label{eq:proof-weight-temperate-DR4}
    \left|\ln \left(\frac{\sigma(z+z')}{\sigma(z)}\right)\right|
    \le
    K_0,
\end{equation}
and therefore
\[
    \sigma(z+z') \le e^{K_0}\,\sigma(z).
\]
Writing \(z=(x,\omega)\) and \(z'=(x',\omega')\), and recalling that \(w=\sigma\), we hence get 
\[
w(x+x',\omega+\omega')
\le
e^{K_0}\,w(x,\omega)
\le
e^{K_0}\,w(x,\omega)\,
\bigl[1+\|x'\|_2\,\phi(x,\omega)+\|\omega'\|_2\,\varphi(x,\omega)\bigr],
\]
which is exactly \eqref{eq:def-phi-phi-temperate} with \(C=e^{K_0}\) and \(\zeta=1\).

\noindent\emph{Case 2:} $\|z'\|_2 \ge \|z\|_2^{\rho}/2$. By hypoellipticity, there exist constants $R>0$ and $C_1>0$ 
with
\[
\sigma(z)\ge C_1\,\bigl(1+\|z\|_2^2\bigr)^{-m_+/2},
\qquad \|z\|_2\ge R.
\]
Since $\sigma$ is continuous and strictly positive on $\R^{2d}$, it attains a
strictly positive minimum on the closed ball
$B_R=\{z\in\R^{2d}:\|z\|_2\le R\}$.
Hence, there exists $C_2>0$ satisfying
\[
\sigma(z)\ge C_2\,\bigl(1+\|z\|_2^2\bigr)^{-m_+/2},
\qquad \|z\|_2\le R.
\]
These estimates imply that there exists a constant $K_1>0$ such that
\[
\sigma(z)\ge K_1\,\bigl(1+\|z\|_2^2\bigr)^{-m_+/2},
\qquad z\in\R^{2d}.
\]
Using this lower bound in conjunction with the corresponding upper bound from
\eqref{eq:bound-def-hypoellip}, we obtain
\[
    \frac{\sigma(z+z')}{\sigma(z)}
    \le
   K_2\,
    \frac{\bigl(1+\|z+z'\|_2^2\bigr)^{-m_-/2}}
         {\bigl(1+\|z\|_2^2\bigr)^{-m_+/2}},
\]
for some constant $K_2>0$.
Applying Peetre’s inequality \eqref{eq:Peetre} to the numerator gives
\[
    \frac{\sigma(z+z')}{\sigma(z)}
    \le
    K_3
    \bigl(1+\|z'\|_2^2\bigr)^{m_-/2}
    \bigl(1+\|z\|_2^2\bigr)^{(m_+-m_-)/2},
\]
for some constant $K_3>0$. Using the assumption of this case,
$\|z'\|_2 \ge \|z\|_2^{\rho}/2$, we can bound
$\bigl(1+\|z\|_2^2\bigr)^{(m_+-m_-)/2}
\le
C\,\bigl(1+\|z'\|_2^2\bigr)^{(m_+-m_-)/(2\rho)}$,
which leads to
\begin{equation}\label{eq:proof-weight-temperate-DR5}
    \sigma(z+z')
    \le
    K_4\,\sigma(z)
    \bigl(1+\|z'\|_2^2\bigr)^{\frac{m_-}{2}+\frac{m_+-m_-}{2\rho}},
\end{equation}
for some constant $K_4>0$. Writing $z=(x,\omega)$ and $z'=(x',\omega')$, we note that
\[
    1+\|z'\|_2^2
    =
    1+\|x'\|_2^2+\|\omega'\|_2^2
    \le
    \bigl(1+\|x'\|_2+\|\omega'\|_2\bigr)^2
    \le
    \bigl(1+\|x'\|_2\,\phi(x,\omega)
           +\|\omega'\|_2\,\varphi(x,\omega)\bigr)^2 .
\]
Recalling that $w=\sigma$, this implies
\[
    w(x+x',\omega+\omega')
    \le
    K_4\,w(x,\omega)
    \bigl[1+\|x'\|_2\,\phi(x,\omega)
          +\|\omega'\|_2\,\varphi(x,\omega)\bigr]^{\,m_-+\frac{m_+-m_-}{\rho}},
\]
which is precisely the $(\phi,\varphi)$-temperateness condition
\eqref{eq:def-phi-phi-temperate} with
$C=K_4$ and $\zeta = m_- + (m_+-m_-)/\rho$.

\textbf{Verifying (N).}
By hypoellipticity of $\sigma\in\hyposymbclassinv(\R^{2d})$
(recall \eqref{eq:bound-def-hypoellip}),
there exist constants $C_1,C_2,R>0$ such that
\begin{equation}\label{eq:verify-N-DR-hypo}
    C_1\bigl(1+\|z\|_2^2\bigr)^{-m_{+}/2}
    \le
    \sigma(z)
    \le
    C_2\bigl(1+\|z\|_2^2\bigr)^{-m_{-}/2},
    \qquad
    z\in\R^{2d}\ \text{with}\ \|z\|_2\ge R .
\end{equation}
With the choice $w=\sigma$, the bounds in \eqref{eq:verify-N-DR-hypo} imply
\begin{equation}\label{eq:verify-N-DR-hypo2}
    C_2^{-\frac{2\rho}{m_-}}\, w(z)^{\frac{2\rho}{m_-}}
    \le
    (1+\|z\|_2^2)^{-\rho}
    \le
    C_1^{-\frac{2\rho}{m_+}}\, w(z)^{\frac{2\rho}{m_+}},
    \qquad
    z\in\R^{2d}\ \text{with}\ \|z\|_2\ge R .
\end{equation}
Since $w=\sigma$ is continuous and strictly positive, its restriction to the
closed ball $\{z\in\R^{2d}:\|z\|_2\le R\}$ attains a positive minimum and a finite
maximum. Define 
\[
    C_- \coloneqq \min_{\|z\|_2\le R} w(z),
    \qquad
    C_+ \coloneqq \max_{\|z\|_2\le R} w(z).
\]
Then, for all $z\in\R^{2d}$ with $\|z\|_2\le R$,
\begin{equation}\label{eq:verify-N-DR-hypo3}
    \left(1+R^2\right)^{-\rho}  C_+^{-\frac{2\rho}{m_-}}  w(z)^{\frac{2\rho}{m_-}}
    \leq \left(1+R^2\right)^{-\rho}
    \leq \left(1+\|z\|_2^2\right)^{-\rho} 
    \leq 1 
    \leq C_-^{-\frac{2\rho}{m_+}} w(z)^{\frac{2\rho}{m_+}} .
\end{equation}
Combining \eqref{eq:verify-N-DR-hypo2} and \eqref{eq:verify-N-DR-hypo3}, and setting
\[
    K' \coloneqq
    \min \left\{
        C_2^{-\frac{2\rho}{m_-}},
        (1+R^2)^{-\rho} C_+^{-\frac{2\rho}{m_-}}
    \right\},
    \qquad
    K \coloneqq
    \max \left\{
        C_1^{-\frac{2\rho}{m_+}},
        C_-^{-\frac{2\rho}{m_+}}
    \right\},
\]
$\gamma'=2\rho/m_-$, and $\gamma=2\rho/m_+$,
we obtain
\[
    K' w(z)^{\gamma'}
    \le
    \phi^{-1}(z)\,\varphi^{-1}(z)
    \le
    K w(z)^{\gamma},
    \qquad z\in\R^{2d},
\]
which is exactly condition~\eqref{eq:DR-N}.

\color{black}



This completes the verification of conditions \emph{(H1)}–\emph{(N)} and hence
establishes \eqref{eq:main-res-volume-integral} for the Weyl quantization.
We next show that the same entropy asymptotics hold for the left and right
quantizations.
To this end, we first state a technical lemma, which is proved at the end of this
section.

\begin{lemma}\label{lem:technical-lemma}
Let $\rho_1>0$ and $m>0$.
Let $T_1$ and $T_2$ be compact linear operators on $L^2(\R^d)$ such that
\[
T_1 = T_2 (I + K) + K_{-\infty},
\]
for some $K\in \Psi^{-m}_{\rho_1}(\R^d)$ and $K_{-\infty}\in\Psi^{-\infty}(\R^d)$.
Assume further that there exists a strictly positive hypoelliptic symbol
$\sigma\in\hyponegclass(\R^{2d})$ whose associated volume function $V_\sigma$ is
regularly varying at zero, 
and that either $T_1$ or $T_2$ is the Weyl quantization
of $\sigma$.
Then 
\begin{equation}\label{eq:technical-lemma-result}
    H_{T_1}(\varepsilon)
    \sim
    H_{T_2}(\varepsilon),
    \qquad \varepsilon \to 0.
\end{equation}
\end{lemma}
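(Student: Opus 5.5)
The plan is to reduce everything to singular values. Entropy only sees the singular values, by the polar decomposition argument leading to \eqref{eq:equality-entropies-absolute-value}. Once we know that $M_{|T_1|}(\lambda)\sim M_{|T_2|}(\lambda)\sim V_\sigma(\lambda)$ as $\lambda\to0$, \cite[Theorem~2]{allard2025metricentropyminimaxrisk} applies to both operators. Indeed, regular variation of $V_\sigma$ gives condition (RC) for any counting function asymptotically equivalent to $V_\sigma$. Hence $H_{T_i}(\varepsilon)\sim\int_\varepsilon^\infty V_\sigma(\lambda)\lambda^{-1}\,d\lambda$ for $i=1,2$, which yields \eqref{eq:technical-lemma-result}. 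So the lemma is reduced to a perturbation statement for singular-value counting functions: if one of $T_1,T_2$ is the Weyl quantization of $\sigma$, then \eqref{eq:spec-asymptotics} holds for it (this was established above), and it must be transferred to the other operator.

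The transfer uses the Ky Fan inequalities $s_{n+k-1}(A+B)\le s_n(A)+s_k(B)$ and $s_n(AB)\le\|A\|\,s_n(B)$. In counting-function form, for every $\delta\in(0,1)$,
\[
M_{|A+B|}(\lambda)\le M_{|A|}\bigl((1-\delta)\lambda\bigr)+M_{|B|}(\delta\lambda).
\]
Suppose first that $T_2$ is the Weyl operator. Write $T_1=T_2+E$ with $E=T_2K+K_{-\infty}$, and also $T_2=T_1-E$. Applying the inequality in both directions gives
\[
M_{|T_2|}\bigl((1+\delta)\lambda\bigr)-M_{|E|}(\delta\lambda)\le M_{|T_1|}(\lambda)\le M_{|T_2|}\bigl((1-\delta)\lambda\bigr)+M_{|E|}(\delta\lambda).
\]
Regular variation of $V_\sigma$ with index $-a<0$ turns $M_{|T_2|}((1\pm\delta)\lambda)$ into $(1\pm\delta)^{-a}V_\sigma(\lambda)(1+o(1))$. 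It therefore suffices to prove
\[
M_{|E|}(\delta\lambda)=o\bigl(V_\sigma(\lambda)\bigr)\quad\text{for each fixed }\delta,
\]
and then let $\delta\to0$. For the smoothing part $K_{-\infty}$, the singular values decay faster than any power, so $M_{|K_{-\infty}|}(\delta\lambda)=O(\lambda^{-\eta})$ for every $\eta>0$. This is negligible, since $V_\sigma(\lambda)\ge\lambda^{-a+\eta'}$ for small $\lambda$. The error $E$ splits into these two pieces via a further Ky Fan split.

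The main obstacle is the term $T_2K$. The crude bound $s_n(T_2K)\le\|K\|s_n(T_2)$ only gives $M_{|T_2K|}=O(V_\sigma)$, not $o(V_\sigma)$. I would instead use the composition calculus: $T_2K$ is a pseudodifferential operator whose symbol belongs to $\Gamma(\R^{2d};\tilde w,\phi,\varphi)$, with the weights $\phi,\varphi$ of \eqref{eq:weight-choice-hypo} and a gained weight $\tilde w(z)=\sigma(z)(1+\|z\|_2^2)^{-\rho_1 m'/2}$ for some $m'>0$. Conditions (H1)–(N) for $\tilde w$ follow exactly as they were verified for $w=\sigma$ above. The upper-bound half of Theorem~\ref{thm:Dauge-Robert} (or a Cwikel-type estimate) then gives $M_{|T_2K|}(\delta\lambda)\lesssim V_{\tilde w}(c\delta\lambda)$. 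The extra polynomial decay of $\tilde w$ makes $V_{\tilde w}(\lambda)=o(V_\sigma(\lambda))$. Concretely, on $\{\tilde w>\lambda\}$ we have either $\sigma>\lambda^{1-\theta}$, or $\|z\|$ small compared with a negative power of $\lambda$, and both sets have volume $o(V_\sigma(\lambda))$ by regular variation with negative index. If the order $-m$ of $K$ only produces decay in $\omega$ (so the gain is not isotropic), I would replace $\|z\|_2$ by $\|\omega\|_2$ in $\tilde w$ and carry out the same volume comparison. This is the step I expect to need the most care.

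When $T_1$ is the Weyl operator, the relation reads $T_2(I+K)=T_1-K_{-\infty}$. I would take a parametrix $Q\in\Psi^0$ of $I+K$, which exists because $I+K$ is elliptic of order $0$ with principal part $I$. Then $T_2=(T_1-K_{-\infty})Q+T_2R_{-\infty}$ with $R_{-\infty}$ smoothing, and $Q=I+K'$ with $K'\in\Psi^{-m}_{\rho_1}$. This puts us back in the previous situation with the roles of the operators exchanged: the term $T_1K'$ is handled by the same weighted Dauge–Robert bound, and all remaining terms are compositions with smoothing operators, hence negligible.
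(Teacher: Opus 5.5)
Your overall architecture is sound, but the step you single out as the main obstacle is not justified as written. Theorem~\ref{thm:Dauge-Robert} only gives asymptotics for the Weyl quantization of a \emph{non-negative} (real) symbol, under regular variation of both $V_\sigma$ and $V_w$. It contains no stand-alone upper bound of the form $M_{|A|}(\lambda)\lesssim V_{\tilde w}(c\lambda)$ for an arbitrary operator $A$ with symbol in $\Gamma(\R^{2d};\tilde w,\phi,\varphi)$. Your operator $T_2K$ is not self-adjoint and its Weyl symbol is complex-valued. Moreover, regular variation of $V_{\tilde w}$ has not been established. Making this route rigorous would require passing to $K^*T_2^2K$, a G\aa{}rding-type comparison with the quantization of $C\tilde w^2$, and a Dauge--Robert upper bound that does not assume regular variation. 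None of this is available in the paper.

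The detour is also unnecessary. The multiplicative Fan--Horn inequality $s_{n+k-1}(AB)\le s_n(A)\,s_k(B)$ needs only compactness of $K$. Given $\eta>0$, fix $k_\eta$ with $s_{k_\eta}(K)\le\eta$; then
\[
M_{|T_2K|}(\mu)\le M_{|T_2|}(\mu/\eta)+k_\eta-1,\qquad \mu>0 .
\]
Combined with $M_{|T_2|}\sim V_\sigma$ and regular variation with index $-a$, this gives $\limsup_{\lambda\to0}M_{|T_2K|}(\delta\lambda)/V_\sigma(\lambda)\le(\eta/\delta)^{a}$ for every $\eta>0$, i.e.\ $M_{|T_2K|}(\delta\lambda)=o(V_\sigma(\lambda))$. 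The same bound handles $T_1K'$ in your second case. The terms $K_{-\infty}Q$ and $T_2R_{-\infty}$ are handled by $s_n(AB)\le\min\{\|A\|\,s_n(B),\,s_n(A)\,\|B\|\}$, using the rapid decay of $s_n(K_{-\infty})$ and $s_n(R_{-\infty})$.

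With this repair your proof is complete, and it genuinely differs from the paper's. The paper never looks at singular values of the non-Weyl operator. Instead it:
\begin{enumerate}
\item uses a parametrix to obtain the reverse relation $T_2=T_1(I+R_1)+R'_{-\infty}$;
\item compares entropy numbers directly via sub-additivity and sub-multiplicativity, obtaining $e_{n+5\lfloor n^{1/2}\rfloor}(T_2)\le e_n(T_1)(1+o(1))$ and the symmetric bound;
\item proves super-polynomial decay of entropy numbers of smoothing operators by a bootstrap through $T_{\bar\sigma}$;
\item converts back to $H$ via Karamata's theorem.
\end{enumerate}
It thus applies the spectral-to-entropy correspondence of \cite[Theorem~2]{allard2025metricentropyminimaxrisk} only to the Weyl operator. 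Your route transfers the Weyl law itself, $M_{|T_1|}\sim M_{|T_2|}\sim V_\sigma$, which is a stronger conclusion. It yields condition (RC) and the type-$2$/type-$3$ integral asymptotics for the non-Weyl quantization too. That is exactly what is needed to apply \cite[Theorems~2 and~4]{allard2025metricentropyminimaxrisk} directly to left and right quantizations, as done in the proof of Corollary~\ref{cor:MR-Main}. It also replaces the bootstrap by the standard fact that operators in $\Psi^{-\infty}(\R^d)$ have Schwartz kernels and hence rapidly decaying singular values.
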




\textbf{Invariance under change of quantization.}
We establish invariance of the entropy asymptotics under a change of
quantization by treating explicitly the Weyl–left case; the Weyl–right case
follows by the same arguments. Let $T_1$ and $T_2$ denote the left and Weyl quantizations of the symbol $\sigma$,
respectively, and let $\sigma_1$ be the Weyl symbol of $T_1$.
Since $T_1$ is the left quantization of a symbol
$\sigma\in\hyposymbclassinv(\R^{2d})$,
Proposition~25.1 in \cite{shubin_pseudodifferential_2001} implies that
the corresponding Weyl symbol $\sigma_1$ also belongs to
$\hyposymbclassinv(\R^{2d})$.

Since $T_2$ is the Weyl quantization of a hypoelliptic symbol in
$\hyposymbclassinv(\R^{2d})$, Theorem~25.1 in
\cite{shubin_pseudodifferential_2001} guarantees the existence of a right
parametrix $B_2$ with symbol $b_2\in\hyposymbclass(\R^{2d})$ such that
\begin{equation}\label{eq:parametrix-B-Tsigma}
    T_2 B_2 = I + S,
    \qquad S\in\Psi^{-\infty}(\R^d).
\end{equation}

We may assume, without loss of generality, that the symbol $b_2$ is nowhere vanishing on $\R^{2d}$. 
Indeed, by hypoellipticity and the lower bound in \eqref{eq:bound-def-hypoellip}, there exists $R>0$ such that
$b_2(z)\neq 0$ for all $z \in \R^{2d}$ with $\|z\|_2 \ge R$. If $b_2$ vanishes at some point in the compact region
$\{z \in \R^{2d} \mid \|z\|_2 < R \}$, we can construct a function
$b_2' \in C^\infty(\R^{2d})$, supported in
$\{\, z \in \R^{2d} \mid \|z\|_2 \le 2R \,\}$, such that
\[
b_2'' \coloneqq b_2 + b_2' \in \hyposymbclass(\R^{2d})
\]
and $b_2''(z) \neq 0$ for all $z \in \R^{2d}$. Let $B_2'$ and $B_2''$ denote the Weyl quantizations of
$b_2'$ and $b_2''$, respectively. Replacing $B_2$ by $B_2''$ in \eqref{eq:parametrix-B-Tsigma}
yields
\[
T_2 B_2'' = I + S + T_2 B_2',
\]
where $T_2 B_2' \in \Psi^{-\infty}(\R^d)$ since $b_2'$ is compactly supported. Absorbing this term
into the remainder, we obtain a representation of the form
\[
T_2 B_2'' = I + S',
\qquad S' \in \Psi^{-\infty}(\R^d),
\]
with a parametrix whose symbol does not vanish on $\R^{2d}$.
Composing \eqref{eq:parametrix-B-Tsigma} on the right with $(T_1 - T_2)$ yields
\[
T_2 B_2 (T_1-T_2) = (T_1-T_2) + S(T_1-T_2).
\]
Because $S\in\Psi^{-\infty}(\R^d)$ and $T_1-T_2$ is a pseudodifferential operator,
and since $\Psi^{-\infty}(\R^d)$ is a two-sided ideal in the
pseudodifferential calculus, the composition $S(T_1-T_2)$ belongs to
$\Psi^{-\infty}(\R^d)$; see, for example, \cite[Theorem~4.22]{hinzmicrolocal} or \cite[Theorem~23.6]{shubin_pseudodifferential_2001}.
Letting
\[
S' \coloneqq -\,S(T_1-T_2)\in\Psi^{-\infty}(\R^d),
\]
we may therefore rewrite the above identity as
\[
T_2 B_2 (T_1-T_2) = (T_1-T_2) - S'.
\]
Rearranging terms yields
\begin{equation}\label{eq:t-perturb}
T_1
=
T_2\bigl(I + B_2(T_1-T_2)\bigr) + S'.
\end{equation}
With $K \coloneqq B_2(T_1-T_2)$ and $K_{-\infty}=S'$, we see that
\eqref{eq:t-perturb} is of the form required in
Lemma~\ref{lem:technical-lemma}.
Therefore, once it is shown that 
$K\in\Psi^{-m}_{\rho_1}(\R^d)$ for some $\rho_1>0$ and $m>0$,
the lemma applies and yields
\begin{equation}\label{eq:equiv-invariance-quant}
    H_{T_1}(\varepsilon)\sim H_{T_2}(\varepsilon),
    \qquad \varepsilon\to0.
\end{equation}

We next prove that in fact $K\in\Psi^{-2\rho}_{\rho}(\R^d)$.
This will be done by first showing that $\sigma^{-1}(\sigma_1-\sigma)\in \Gamma^{-2\rho}_{\rho}(\R^{2d})$. 
Recall that $\sigma_1$ denotes the Weyl symbol of the left quantization $T_1$.
By \cite[Theorem~23.3]{shubin_pseudodifferential_2001}, for every $N\in\N^*$ there
exists a remainder
$r_N\in\Gamma^{-m_- - 2(N+1)\rho}_{\rho}(\R^{2d})$ such that
\begin{equation}\label{eq:asymp-expansion-diff-symbols}
    \sigma_1(x,\omega)-\sigma(x,\omega)
    =
    \sum_{\substack{\alpha\in\N^d\\1\le|\alpha|\le N}}
    \frac{1}{\alpha!\,2^{|\alpha|}}
    \partial_\omega^\alpha\partial_x^\alpha\sigma(x,\omega)
    + r_N(x,\omega),
    \qquad x,\omega\in\R^d.
\end{equation}
Since $\sigma$ is strictly positive, division by $\sigma$ is well-defined, and
\eqref{eq:asymp-expansion-diff-symbols} yields
\begin{equation}\label{eq:finite-decom-Weyl-left}
    \frac{\sigma_1-\sigma}{\sigma}
    =
    \sum_{\substack{\alpha\in\N^d\\1\le|\alpha|\le N}}
    \frac{1}{\alpha!\,2^{|\alpha|}}
    \frac{\partial_\omega^\alpha\partial_x^\alpha\sigma}{\sigma}
    + \frac{r_N}{\sigma}.
\end{equation}
We treat the two terms on the right-hand side of \eqref{eq:finite-decom-Weyl-left} separately.
First, since $\sigma^{-1}\in\hyposymbclass(\R^{2d})$, application of
\cite[Lemma~25.1]{shubin_pseudodifferential_2001} allows us to conclude that
\[
\frac{r_N}{\sigma}
\in
\Gamma^{\,m_+ - m_- - 2(N+1)\rho}_{\rho}(\R^{2d}).
\]
Choosing $N$ sufficiently large ensures
\begin{equation}\label{eq:est-1}
\frac{r_N}{\sigma}\in \Gamma^{-2\rho}_{\rho}(\R^{2d}).
\end{equation}
Second, consider the derivative terms appearing in the finite sum in
\eqref{eq:finite-decom-Weyl-left}. For each multi-index $\alpha\neq 0$, the
Leibniz rule together with the derivative bounds in
\eqref{eq:bound-def-hypoellip} imply that
\[
\sigma^{-1}\,\partial_\omega^\alpha\partial_x^\alpha\sigma
\in
\Gamma^{-2\rho|\alpha|}_{\rho}(\R^{2d}).
\]
Since $|\alpha|\ge 1$, we have $-2\rho|\alpha|\le -2\rho$, and hence
\begin{equation}\label{eq:est-2}
\sigma^{-1}\,\partial_\omega^\alpha\partial_x^\alpha\sigma
\in
\Gamma^{-2\rho}_{\rho}(\R^{2d}).
\end{equation}
Combining \eqref{eq:est-1} and \eqref{eq:est-2} in \eqref{eq:finite-decom-Weyl-left} yields
\[
\sigma^{-1}(\sigma_1-\sigma)\in \Gamma^{-2\rho}_{\rho}(\R^{2d}),
\]
as claimed.

Returning to the parametrix identity \(T_2 B_2 = I + S\), let \(s\) denote the Weyl
symbol of the 
operator \(S\).
By the composition formula for Weyl quantization
\cite[Theorem~23.6]{shubin_pseudodifferential_2001} and the hypoellipticity of both
\(b_2\) and \(\sigma\), for every \(M\in\N\) there exist constants
\(C_{\alpha,\beta}>0\) and a remainder
\(r'_M\in\Gamma^{m_+ - m_- - 2(M+1)\rho}_{\rho}(\R^{2d})\) such that
\begin{align}
    1+s(z)
    &=
    \sum_{\substack{\alpha,\beta\in\N^d\\0\le|\alpha|+|\beta|\le M}}
    C_{\alpha,\beta}\,
    \partial_z^\alpha b_2(z)\,
    \partial_z^\beta \sigma(z)
    + r'_M(z)
    \label{eq:long-formula-1}\\
    &=
    b_2(z)\,\sigma(z)
    \left(
        1
        + \sum_{\substack{\alpha,\beta\in\N^d\\1\le|\alpha|+|\beta|\le M}}
        C_{\alpha,\beta}\,
        \frac{\partial_z^\alpha b_2(z)}{b_2(z)}\,
        \frac{\partial_z^\beta \sigma(z)}{\sigma(z)}
        + \frac{r'_M(z)}{b_2(z)\,\sigma(z)}
    \right),
    \label{eq:long-formula-2}
\end{align}
for all \(z\in\R^{2d}\). 
Recall that $\sigma(z)$ is non-vanishing for all $z\in\R^{2d}$ by assumption, 
and that $b_2(z)$ has been chosen to be non-vanishing on $\R^{2d}$ without loss of generality.

Choosing $M$ sufficiently large ensures that $r'_M\in\Gamma^{-2\rho}_{\rho}(\R^{2d})$.
Moreover, by the Leibniz rule and hypoellipticity,
\[
b_2^{-1}\,\partial^\alpha b_2 \in \Gamma^{-\rho|\alpha|}_{\rho}(\R^{2d}),
\qquad
\sigma^{-1}\,\partial^\beta \sigma \in \Gamma^{-\rho|\beta|}_{\rho}(\R^{2d}),
\]
for all multi-indices $\alpha,\beta$.
Consequently, the expression in parentheses in \eqref{eq:long-formula-2}
is of the form $1+r'$ with $r'\in\Gamma^{-2\rho}_{\rho}(\R^{2d})$.
Since $r'(z)\to 0$ as $\|z\|_2\to\infty$, there exists $R>0$ such that
\[
|r'(z)|\le \tfrac12,
\qquad \text{for all } z\in\R^{2d} \text{ with } \|z\|_2 \ge R.
\]
We may now modify $r'$ on $\{z\in\R^{2d} \mid \|z\|_2<R \}$ as follows.
Let $\chi\in C_c^\infty(\R^{2d})$ satisfy
\[
\chi(z)=1 \quad \text{for } z\in\{z\in\R^{2d} \mid \|z\|_2\le R \},
\]
and
\[
\chi(z)=0 \quad \text{for } z\in\{z\in\R^{2d} \mid \|z\|_2\ge 2R \},
\]
and let $G\in C^\infty(\R)$ be a smooth truncation of the identity satisfying
\[
G(t)=t \quad \text{for } |t|\le \tfrac12,
\qquad
|G(t)|\le \tfrac12 \quad \text{for all } t\in\R.
\]
Define
\[
\widetilde r'(z)\coloneqq (1-\chi(z))\,r'(z)+\chi(z)\,G(r'(z)),
\qquad z\in\R^{2d}.
\]
Then
\[
\widetilde r' - r'
=
\chi\bigl(G(r')-r'\bigr)
\in C_c^\infty(\R^{2d}),
\]
and hence
\[
\widetilde r' \in \Gamma^{-2\rho}_{\rho}(\R^{2d}),
\]
since $r'\in \Gamma^{-2\rho}_{\rho}(\R^{2d})$ and
$C_c^\infty(\R^{2d})\subset \Gamma^{-\infty}(\R^{2d})
\subset \Gamma^{-2\rho}_{\rho}(\R^{2d})$.
Redefining
\[
1+\widetilde s \coloneqq b_2\,\sigma(1+\widetilde r'),
\]
we get 
\[
\widetilde s - s = b_2 \, \sigma(\widetilde r' - r')\in \Gamma^{-\infty}(\R^{2d}),
\]
since $\widetilde r'-r'\in C_c^\infty(\R^{2d})$.
Hence $\widetilde s\in\Gamma^{-\infty}(\R^{2d})$, while
$\widetilde r'\in\Gamma^{-2\rho}_{\rho}(\R^{2d})$ by construction, and the identity
\[
1+\widetilde s = b_2\,\sigma(1+\widetilde r')
\]
retains the form of \eqref{eq:long-formula-1}--\eqref{eq:long-formula-2}.
Replacing $(s,r')$ by $(\widetilde s,\widetilde r')$ in \eqref{eq:long-formula-1}--\eqref{eq:long-formula-2}, we thus preserve all structural
properties required in what follows—namely the identity, the symbol class of $r'$, and
the property of $s$—and we may therefore assume, without loss of generality, 
\[
|r'(z)|\le \tfrac12,
\qquad z\in\R^{2d}.
\]
It follows that
\[
b_2(z)=\sigma(z)^{-1}\,\frac{1+s(z)}{1+r'(z)},
\qquad z\in\R^{2d}.
\]
Moreover,
\[
(1+r')^{-1}=1+q,
\qquad
q=-\frac{r'}{1+r'}.
\]
Writing $q=r'\,H(r')$ with $H(t)\coloneqq -\frac{1}{1+t}$, and noting that $H$ is $C^\infty$ on an open neighborhood of
$[-\tfrac12,\tfrac12]$, we obtain $H(r')\in\Gamma^0_{\rho}(\R^{2d})$ and hence
\[
q\in\Gamma^{-2\rho}_{\rho}(\R^{2d}).
\]
Therefore,
\[
\frac{1+s}{1+r'} = (1+s)(1+q) = 1+\tilde r,
\qquad
\tilde r \in \Gamma^{-2\rho}_{\rho}(\R^{2d}),
\]
and consequently
\begin{equation}\label{eq:b2-symbol}
b_2=\sigma^{-1}(1+\tilde r),
\qquad \tilde r\in\Gamma^{-2\rho}_{\rho}(\R^{2d}).
\end{equation}

Applying the same arguments to the operator
\(B_2(T_1-T_2)\) shows that its Weyl symbol is of the form
\begin{equation}\label{eq:b2-symbol-2}
    b_2(\sigma_1-\sigma)(1+r''),
\end{equation}
for some \(r''\in\Gamma^{-2\rho}_{\rho}(\R^{2d})\). Using \eqref{eq:b2-symbol} in \eqref{eq:b2-symbol-2} along with
\(\sigma^{-1}(\sigma_1-\sigma)\in\Gamma^{-2\rho}_{\rho}(\R^{2d})\), we obtain
\[
b_2(\sigma_1-\sigma)(1+r'')=\sigma^{-1}(\sigma_1-\sigma)(1+\tilde{r})(1+r'') 
\in \Gamma^{-2\rho}_{\rho}(\R^{2d})
\]
and hence
\[
B_2(T_1-T_2)\in\Psi^{-2\rho}_{\rho}(\R^d).
\]
This establishes the required property for the operator
\(K = B_2(T_1 - T_2)\), 
and thus completes the proof of
the invariance under change of quantization stated in \eqref{eq:equiv-invariance-quant}.
\textbf{Invertibility formula.}
We now prove the second statement of Theorem~\ref{thm:main-result}, namely
\[
H_{T_{\sigma^{-1}}^{-1}}(\varepsilon)\sim H_\sigma(\varepsilon),
\qquad \varepsilon\to0.
\]
To this end, we establish a parametrix relation between $T_\sigma$ and
$T_{\sigma^{-1}}$, showing that their composition differs from the identity by
a compact operator of strictly negative order.
Using the assumed invertibility of $T_{\sigma^{-1}}$, this relation allows us
to compare the entropy of $T_\sigma$ with that of $T_{\sigma^{-1}}^{-1}$.
Applying the Weyl composition formula
\cite[Theorem~23.6]{shubin_pseudodifferential_2001}
to $\sigma^{-1}$ and $\sigma$ and following a similar line of arguments as the one leading to \eqref{eq:b2-symbol}, shows that
\begin{equation}\label{eq:prod-operators-identity-comp}
    T_{\sigma^{-1}}T_\sigma
    =
    I+R,
\end{equation}
with \(R\in\Psi^{-2\rho}_\rho(\R^{d})\).

Since \(T_{\sigma^{-1}}\) is invertible by assumption, \eqref{eq:prod-operators-identity-comp}
can be rewritten as
\begin{equation}\label{eq:introduction-of-R2}
    T_\sigma
    =
    T_{\sigma^{-1}}^{-1}(I+R).
\end{equation}
By \cite[Theorem~24.4]{shubin_pseudodifferential_2001}, the operator
\(R\in\Psi^{-2\rho}_\rho(\R^{d})\) extends to a compact operator on \(L^2(\R^d)\).
Identity \eqref{eq:introduction-of-R2} is therefore of the form required in
Lemma~\ref{lem:technical-lemma}, with
\(T_1=T_\sigma\), \(T_2=T_{\sigma^{-1}}^{-1}\),
\(K=R\), and \(K_{-\infty}=0\).
Applying the lemma yields
\[
    H_{T_{\sigma^{-1}}^{-1}}(\varepsilon)
    \sim
    H_\sigma(\varepsilon),
    \qquad \varepsilon\to0,
\]
which is precisely \eqref{eq:main-result-part2}.

\begin{proof}[Proof of Lemma~\ref{lem:technical-lemma}]
Let $k$ denote the Weyl symbol of $K$.
Since $K\in\Psi^{-m}_{\rho_1}(\R^d)$ for some $m>0$, we have
$k\in\Gamma^{-m}_{\rho_1}(\R^{2d})$, hence $k$ is of strictly negative order.
The constant symbol $1$ belongs to $\mathrm{H}\Gamma^{0,0}_{\rho_1}(\R^{2d})$.
Applying Shubin’s stability result for hypoelliptic symbols under lower-order
additive perturbations \cite[Lemma~25.1 (1c)]{shubin_pseudodifferential_2001} 
yields 
\[
1+k \in \mathrm{H}\Gamma^{0,0}_{\rho_1}(\R^{2d}).
\]
Consequently, by \cite[Theorem~25.1]{shubin_pseudodifferential_2001},
the operator $I+K$ admits a right parametrix, that is, there exists a
pseudodifferential operator $P\in\Psi^0_{\rho_1}(\R^d)$ such that
\begin{equation}\label{eq:parametrix-formula}
    (I+K)P = I + R_{-\infty},
\end{equation}
for some residual operator $R_{-\infty}\in\Psi^{-\infty}(\R^d)$.
By the Weyl composition formula \cite[Theorem~23.6]{shubin_pseudodifferential_2001},
the Weyl symbol of $(I+K)P$ has the form
\[
(1+k)p + q,
\]
where $p$ is the Weyl symbol of $P$ and
$q\in\Gamma^{-2\rho_1}_{\rho_1}(\R^{2d})$.
On the other hand, by \eqref{eq:parametrix-formula}, the Weyl symbol of $(I+K)P$
is $1+r_{-\infty}$, where $r_{-\infty}$ is the symbol of $R_{-\infty}$.
Using that $k\in\Gamma^{-m}_{\rho_1}(\R^{2d})$ and comparing both expressions,
it follows that there exists a symbol
\[
r_1 \in \Gamma^{-\gamma}_{\rho_1}(\R^{2d}),
\qquad
\gamma \coloneqq \min\{2\rho_1,m\},
\]
such that
\[
p = 1 + r_1.
\]
Equivalently, in operator form,
\[
P = I + R_1,
\qquad
R_1 \in \Psi^{-\gamma}_{\rho_1}(\R^d).
\]
We now multiply the identity
\[
T_1 = T_2 (I+K) + K_{-\infty}
\]
from the right by $P$ and use \eqref{eq:parametrix-formula} to obtain
\begin{align*}
T_1 P
&= T_2 (I+K)P + K_{-\infty}P \\
&= T_2 (I+R_{-\infty}) + K_{-\infty}P \\
&= T_2 + T_2 R_{-\infty} + K_{-\infty}P.
\end{align*}
Rearranging terms yields
\begin{equation}\label{eq:introduction-of-R1}
    T_2
    =
    T_1 (I+R_1) + R'_{-\infty},
\end{equation}
where
\[
R'_{-\infty} \coloneqq -T_2 R_{-\infty} - K_{-\infty}P
\in \Psi^{-\infty}(\R^d),
\]
since $\Psi^{-\infty}(\R^d)$ is a two-sided ideal in the pseudodifferential
calculus. Equation \eqref{eq:introduction-of-R1} provides a representation of $T_2$
of the same structural form as the hypothesis of
Lemma~\ref{lem:technical-lemma}, namely
\[
T_2 = T_1 (I+R_1) + R'_{-\infty},
\]
with $R_1\in\Psi^{-\gamma}_{\rho_1}(\R^d)$ for some $\gamma>0$ and
$R'_{-\infty}\in\Psi^{-\infty}(\R^d)$.
Although the order $\gamma$ may differ from the original exponent $m$,
the lemma only requires the perturbation to be of strictly negative order.
This symmetry explains why Lemma~\ref{lem:technical-lemma} only requires that
either $T_1$ or $T_2$ be the Weyl quantization of~$\sigma$.

We now turn to the comparison of the entropy numbers of $T_1$ and $T_2$.
Recall that the entropy numbers $\{e_n(T)\}_{n\in\N}$ of a compact operator
$T\colon L^2(\R^d)\to L^2(\R^d)$ are defined by
\begin{equation}\label{eq:def-entropy-nbs}
    e_n(T)
    \coloneqq
    \inf\bigl\{ \varepsilon>0 \,\big|\, H_T(\varepsilon)\le n \bigr\},
    \qquad n\in\N.
\end{equation}
We refer to
\cite{opideals1980pietsch,bookeigen1987pietsch,carlEntropyCompactnessApproximation1990,lorentzConstructiveApproximationAdvanced1996,edmundsFunctionSpacesEntropy1996}
for background on entropy numbers and their basic properties. Starting from the representation \eqref{eq:introduction-of-R1}, and using the
submultiplicativity and subadditivity of entropy numbers 
(see, for example,
\cite[Section~15.7]{lorentzConstructiveApproximationAdvanced1996} or
\cite[Section~1.3.1, Lemma~1%
]{edmundsFunctionSpacesEntropy1996}),
we obtain
\begin{equation}\label{eq:entropy-nb-arg-1}
    e_{\,n+5\lfloor n^{1/2}\rfloor}(T_2)
    \le
    e_n(T_1)\,e_{\lfloor n^{1/2}\rfloor}(I+R_1)
    +
    e_{4\lfloor n^{1/2}\rfloor}(R'_{-\infty}),
\end{equation}
together with
\begin{equation}\label{eq:entropy-nb-arg-2}
    e_{\lfloor n^{1/2}\rfloor}(I+R_1)
    \le
    e_{0}(I) + e_{\lfloor n^{1/2}\rfloor}(R_1),
    \qquad n\in\N.
\end{equation}
Since $e_{0}(I)=\|I\|_{2\to2}=1$ 
\cite[Section~15.7]{lorentzConstructiveApproximationAdvanced1996},
and $R_1$ by virtue of being compact satisfies
\[
\lim_{{j}\to\infty} e_{j}(R_1)=0,
\]
it follows, 
by \eqref{eq:entropy-nb-arg-2},
that
\begin{equation}\label{eq:bound-entropy-identity}
    e_{\lfloor n^{1/2}\rfloor}(I+R_1)
    \le
    1+o(1),
    \qquad n\to\infty.
\end{equation}

We now estimate the term
$e_{4\lfloor n^{1/2}\rfloor}\!\left(R'_{-\infty}\right)$
appearing in \eqref{eq:entropy-nb-arg-1}.
To this end, arbitrarily fix $s>0$ and consider the hypoelliptic symbol
$\bar\sigma\in \mathrm{H}\Gamma^{-s,-s}_{\rho_1}(\R^{2d})$
defined according to $\bar\sigma(z) = (1+\|z\|_2^2)^{-s/2}$, for all $z\in\R^{2d}$.
Let $T_{\bar\sigma}$ denote the Weyl quantization of $\bar\sigma$ and let $Q\in\Psi^{s}_{\rho_1}(\R^d)$ be a right parametrix, that is, 
\begin{equation*}
    T_{\bar\sigma} Q = I + R''_{-\infty}, 
    \quad  R''_{-\infty} \in \Psi^{-\infty}(\R^d).
\end{equation*}

One verifies that 
\begin{equation}\label{eq:proof-lemma-AAA}
    V_{\bar\sigma}(\lambda)
    =  \int_{\R^{2d}} \mathbbm{1}_{\{(1+\|z\|_2^2)^{-s/2}>\lambda\}}  \,dz
    \sim \omega_{2d}\,\lambda^{-\frac{2d}{s}}, 
    \quad  \lambda \to 0.
\end{equation}
Applying the entropy--volume relation \eqref{eq:main-res-volume-integral},
already established for Weyl quantization in \eqref{eq:mainr-result-for-Weyl-1}, to the operator
$T_{\bar\sigma}$ yields
\[
H_{T_{\bar\sigma}}(\varepsilon)
\sim
\int_\varepsilon^\infty \frac{V_{\bar\sigma}(\lambda)}{\lambda}\,d\lambda
\sim \frac{s \, \omega_{2d}}{2d}\varepsilon^{-\frac{2d}{s}},
\qquad \varepsilon\to0.
\]
By the definition of the entropy numbers \eqref{eq:def-entropy-nbs}, it follows that
\begin{equation}\label{eq:proof-lemma-AAB}
    e_j(T_{\bar\sigma})
    \sim
    \left(\frac{2d\,j}{s\,\omega_{2d}}\right)^{-\frac{s}{2d}},
    \qquad j\to\infty.
\end{equation}
Evaluating \eqref{eq:proof-lemma-AAB} at \(j=\lfloor n^{1/2}\rfloor\) yields
\begin{equation}\label{eq:poly-scaling-ent-nb}
    e_{\lfloor n^{1/2}\rfloor}(T_{\bar\sigma})
    \le
    C\,n^{-\frac{s}{4d}}\bigl(1+o(1)\bigr),
    \qquad n\to\infty,
\end{equation}
for a suitable constant \(C>0\).

We now return to the operator $R'_{-\infty}$.
Using the factorization
\[
R'_{-\infty} = \left(T_{\bar\sigma}Q - R''_{-\infty}\right)R'_{-\infty},
\]
and applying the subadditivity and submultiplicativity of entropy numbers, we obtain
\begin{align}
e_{4\lfloor n^{1/2}\rfloor}\!\left(R'_{-\infty}\right)
&\le
e_{\lfloor n^{1/2}\rfloor}(T_{\bar\sigma})\,
e_{\lfloor n^{1/2}\rfloor}\!\left(Q R'_{-\infty}\right)
+
e_{\lfloor n^{1/2}\rfloor}\!\left(R''_{-\infty}\right)
e_{\lfloor n^{1/2}\rfloor}\!\left(R'_{-\infty}\right)
\label{eq:bound-ent-split-1}\\
&\le
C\,n^{-\frac{s}{4d}}\bigl(1+o(1)\bigr)\,
e_{\lfloor n^{1/2}\rfloor}\!\left(Q R'_{-\infty}\right)
+
e_{\lfloor n^{1/2}\rfloor}\!\left(R''_{-\infty}\right)
e_{\lfloor n^{1/2}\rfloor}\!\left(R'_{-\infty}\right),
\label{eq:bound-ent-split-2}
\end{align}
where \eqref{eq:bound-ent-split-2} follows from \eqref{eq:poly-scaling-ent-nb}.
Since $Q\in\Psi^{s}_{\rho_1}(\R^d)$ and $R'_{-\infty}\in\Psi^{-\infty}(\R^d)$,
their composition $Q R'_{-\infty}$ belongs to $\Psi^{-\infty}(\R^d)$ and is therefore compact on $L^2(\R^d)$.
Moreover, $R''_{-\infty}\in\Psi^{-\infty}(\R^d)$ is also compact.
Consequently,
\[
\lim_{n\to\infty}
e_{\lfloor n^{1/2}\rfloor}\!\left(Q R'_{-\infty}\right)
=
0,
\qquad
\lim_{n\to\infty}
e_{\lfloor n^{1/2}\rfloor}\!\left(R''_{-\infty}\right)
=
0.
\]
Combining these limits with \eqref{eq:bound-ent-split-1}–\eqref{eq:bound-ent-split-2} yields
\begin{equation}\label{eq:R-fast-decay}
e_{4\lfloor n^{1/2}\rfloor}\!\left(R'_{-\infty}\right)
=
o \left(n^{-\frac{s}{4d}} + e_{\lfloor n^{1/2}\rfloor}\!\left(R'_{-\infty}\right)\right),
\qquad n\to\infty.
\end{equation}
Iterating \eqref{eq:R-fast-decay} and using that $s>0$ is arbitrary yields that, for every $q>0$,
\begin{equation}\label{eq:R-fast-decay-q}
e_{4\lfloor n^{1/2}\rfloor} \left(R'_{-\infty}\right)
=
o \left(n^{-q}\right),
\qquad n\to\infty.
\end{equation}
From now on, we assume without loss of generality that \(T_1=T_\sigma\), where
\(T_\sigma\) is the Weyl quantization of the hypoelliptic symbol \(\sigma\).
If instead \(T_2=T_\sigma\), the same argument applies after interchanging the
roles of \(T_1\) and \(T_2\) from the outset.

An argument identical to that used to derive \eqref{eq:R-fast-decay-q} shows
that the entropy numbers of the 
operator \(K_{-\infty}\) in the
parametrix relation
\begin{equation}\label{eq:parametrix-T1-T2}
T_1 = T_2(I+K)+K_{-\infty}
\end{equation}
also decay faster than any polynomial rate. More precisely, for every \(q>0\),
\begin{equation}\label{eq:K-fast-decay-q}
    e_{4\lfloor n^{1/2}\rfloor}\!\left(K_{-\infty}\right)
    =
    o \left(n^{-q}\right),
    \qquad n\to\infty.
\end{equation}
We next derive two-sided polynomial bounds for the entropy numbers of
\(T_1=T_\sigma\).
Since $\sigma \in \mathrm{H}\Gamma^{-}_{\rho}(\R^{2d})$, there exist exponents
$m_+ \ge m_- > 0$ such that the two-sided hypoellipticity bounds
\eqref{eq:bound-def-hypoellip} hold. Consequently, there exist constants
$c_1,c_2>0$ and $\lambda_0>0$ so that, for all $\lambda\in(0,\lambda_0)$,
\[
c_1 \lambda^{-2d/m_+}
\le
V_\sigma(\lambda)
\le
c_2 \lambda^{-2d/m_-}.
\]
Since \eqref{eq:main-res-volume-integral} has already been established for the
Weyl quantization, this yields
\[
H_{T_1}(\varepsilon)
\sim
\int_\varepsilon^\infty \frac{V_\sigma(\lambda)}{\lambda}\,d\lambda,
\qquad \varepsilon\to0,
\]
and therefore
\[
c_3 \, \varepsilon^{-2d/m_+}
\le
H_{T_1}(\varepsilon)
\le
c_4 \, \varepsilon^{-2d/m_-},
\qquad \varepsilon\in(0,\varepsilon_0),
\]
for suitable constants \(c_3,c_4,\varepsilon_0>0\).
By the definition of the entropy numbers, these bounds imply the existence of
constants \(c_5,c_6>0\) such that
\begin{equation}\label{eq:two-sided-polynomial-T1}
    c_5\, n^{-m_+/(2d)}
\lesssim
    e_n(T_1)
\lesssim
    c_6\, n^{-m_-/(2d)},
    \qquad n\to\infty.
\end{equation}
Choose \(q>m_+/(2d)\). Then \eqref{eq:R-fast-decay-q} and
\eqref{eq:two-sided-polynomial-T1} imply
\[
e_{4\lfloor n^{1/2}\rfloor}\!\left(R'_{-\infty}\right)
=
o \bigl(e_n(T_1)\bigr),
\qquad n\to\infty.
\]
Combining this with \eqref{eq:entropy-nb-arg-1} and
\eqref{eq:bound-entropy-identity}, we obtain
\begin{equation}\label{eq:inequality-entropy-nbs}
    e_{\,n+5\lfloor n^{1/2}\rfloor}(T_2)
    \le
    e_n(T_1)\bigl(1+o(1)\bigr),
    \qquad n\to\infty.
\end{equation}
Based on \eqref{eq:parametrix-T1-T2}, applying the same entropy-number calculus as before, we get
\begin{equation}\label{eq:entropy-nb-arg-1-bis}
    e_{\,n+5\lfloor n^{1/2}\rfloor}(T_1)
    \le
    e_n(T_2)\,e_{\lfloor n^{1/2}\rfloor}(I+K)
    +
    e_{4\lfloor n^{1/2}\rfloor}(K_{-\infty}).
\end{equation}
Since \(K\) is compact,
\[
e_{\lfloor n^{1/2}\rfloor}(I+K)\le 1+o(1),
\qquad n\to\infty.
\]
Moreover, by \eqref{eq:K-fast-decay-q},
\[
e_{4\lfloor n^{1/2}\rfloor}(K_{-\infty})
=
o \left(n^{-q}\right),
\]
for every $q>0$.
Using the lower bound in \eqref{eq:two-sided-polynomial-T1} together with
\eqref{eq:entropy-nb-arg-1-bis}, we infer that
\[
e_n(T_2)\gtrsim n^{-m_+/(2d)},
\qquad n\to\infty.
\]
In particular,
\[
e_{4\lfloor n^{1/2}\rfloor}(K_{-\infty})
=
o \bigl(e_n(T_2)\bigr),
\qquad n\to\infty.
\]
Reinserting this into \eqref{eq:entropy-nb-arg-1-bis}, we obtain
\begin{equation}\label{eq:inequality-entropy-nbs-bis}
    e_{\,n+5\lfloor n^{1/2}\rfloor}(T_1)
    \le
    e_n(T_2)\bigl(1+o(1)\bigr),
    \qquad n\to\infty.
\end{equation}
The preceding argument was carried out under the assumption \(T_1=T_\sigma\).
Since the parametrix relation assumed in Lemma~\ref{lem:technical-lemma} and the
derived reverse parametrix relation are invariant under exchanging \(T_1\) and
\(T_2\), the same reasoning applies when \(T_2=T_\sigma\).
Consequently, the estimates
\eqref{eq:inequality-entropy-nbs} and
\eqref{eq:inequality-entropy-nbs-bis}
hold irrespective of which of the two operators is the Weyl quantization of
\(\sigma\).

\color{black}

We now convert the entropy-number relations
\eqref{eq:inequality-entropy-nbs} and \eqref{eq:inequality-entropy-nbs-bis}
into corresponding asymptotic relations for the entropy functions.
Fix $\varepsilon>0$ and choose $n_\varepsilon\in\N$ such that
\begin{equation}\label{eq:definition-k-entropy}
    n_\varepsilon+5\lfloor n_\varepsilon^{1/2}\rfloor
    <
    H_{T_2}(\varepsilon)
    \le
  n_\varepsilon+1+5\lfloor (n_\varepsilon+1)^{1/2}\rfloor.
\end{equation}
By the definition of entropy numbers this implies
\[
e_{\,n_\varepsilon+5\lfloor n_\varepsilon^{1/2}\rfloor}(T_2)\ge \varepsilon .
\]
Combining this with \eqref{eq:inequality-entropy-nbs}, we obtain
\[
\varepsilon
\le
e_{\,n_\varepsilon+5\lfloor n_\varepsilon^{1/2}\rfloor}(T_2)
\le
e_{n_\varepsilon}(T_1)\bigl(1+o(1)\bigr),
\qquad \varepsilon\to0 .
\]
Hence there exists a function $\zeta:\R_+^*\to\R_+^*$ with
$\zeta(\varepsilon)\to1$ as $\varepsilon\to0$ such that
\begin{equation}\label{eq:relation-eps-entropy-nbs}
    \varepsilon\,\zeta(\varepsilon)
    <
    e_{n_\varepsilon}(T_1).
\end{equation}
Applying again the definition of entropy numbers yields
\begin{equation}\label{eq:useful-relation1}
    H_{T_1}\!\left(\varepsilon\,\zeta(\varepsilon)\right)
    \ge
    n_\varepsilon
    =
    H_{T_2}(\varepsilon)\bigl(1+o_{\varepsilon\to0}(1)\bigr).
\end{equation}
To obtain an upper bound for \(H_{T_1}\) in terms of \(H_{T_2}\), let $m_\varepsilon := \left\lceil H_{T_2}(\varepsilon) \right\rceil$.
Then $m_\varepsilon\to\infty$ as $\varepsilon\to0$, and by the definition of
entropy numbers we have
\[
e_{m_\varepsilon}(T_2)\le \varepsilon .
\]
Applying \eqref{eq:inequality-entropy-nbs-bis} with $n=m_\varepsilon$ yields
\[
e_{\,m_\varepsilon+5\lfloor m_\varepsilon^{1/2}\rfloor}(T_1)
\le
\varepsilon\bigl(1+o(1)\bigr),
\qquad \varepsilon\to0 .
\]
Consequently, there exists a function
$\zeta':\mathbb{R}_+^*\to\mathbb{R}_+^*$ with
$\zeta'(\varepsilon)\to1$ as $\varepsilon\to0$ such that
\begin{equation}\label{eq:useful-relation1-bis}
    H_{T_1}\!\left(\varepsilon\,\zeta'(\varepsilon)\right)
    \le
    m_\varepsilon+5\lfloor m_\varepsilon^{1/2}\rfloor.
\end{equation}
Since
\[
m_\varepsilon
=
H_{T_2}(\varepsilon)+O(1),
\qquad \varepsilon\to0,
\]
and $H_{T_2}(\varepsilon)\to\infty$, we obtain
\[
m_\varepsilon+5\lfloor m_\varepsilon^{1/2}\rfloor
=
H_{T_2}(\varepsilon)\bigl(1+o_{\varepsilon\to0}(1)\bigr).
\]
Substituting this relation into \eqref{eq:useful-relation1-bis} yields
\begin{equation}\label{eq:asymp-upper}
    H_{T_1} \left(\varepsilon\,\zeta'(\varepsilon)\right)
\le
H_{T_2}(\varepsilon)\bigl(1+o_{\varepsilon\to0}(1)\bigr).
\end{equation}

From here on we assume, without loss of generality, that
$T_1=T_\sigma$. The case $T_2=T_\sigma$ is treated in the
same way after interchanging the roles of $T_1$ and $T_2$. Since \eqref{eq:main-res-volume-integral} has already been established
for the Weyl quantization, we obtain
\begin{align}
H_{T_1}\!\left(\varepsilon\,\zeta(\varepsilon)\right)
&\sim
\int_{\varepsilon\zeta(\varepsilon)}^\infty
\frac{V_\sigma(\lambda)}{\lambda}\,d\lambda
\label{eq:split-integral-zeta-proof-A1}\\
&=
\int_{\varepsilon}^{\infty}\frac{V_\sigma(\lambda)}{\lambda}\,d\lambda
+
\int_{\varepsilon\zeta(\varepsilon)}^{\varepsilon}
\frac{V_\sigma(\lambda)}{\lambda}\,d\lambda
\notag\\
&\sim
H_{T_1}(\varepsilon)
+
\int_{\varepsilon\zeta(\varepsilon)}^{\varepsilon}
\frac{V_\sigma(\lambda)}{\lambda}\,d\lambda,
\label{eq:split-integral-zeta-proof-B}
\end{align}
as $\varepsilon\to0$, where the second integral is understood with its
natural orientation. Using the monotonicity of $V_\sigma$, the remainder term in
\eqref{eq:split-integral-zeta-proof-A1}--\eqref{eq:split-integral-zeta-proof-B}
satisfies
\begin{equation}\label{eq:bound-remainder-volume-integral-A}
\left|
\int_{\varepsilon\zeta(\varepsilon)}^{\varepsilon}
\frac{V_\sigma(\lambda)}{\lambda}\,d\lambda
\right|
\le
\frac{V_\sigma \left(\varepsilon\,\zeta^*(\varepsilon)\right)}
     {\varepsilon\,\zeta^*(\varepsilon)}
\,
\bigl|\varepsilon-\varepsilon\,\zeta(\varepsilon)\bigr|
=
\frac{V_\sigma \left(\varepsilon\,\zeta^*(\varepsilon)\right)}
     {\zeta^*(\varepsilon)}
\,
|1-\zeta(\varepsilon)|,
\end{equation}
for all $\varepsilon>0$, where
$\zeta^*(\varepsilon)=\min\{1,\zeta(\varepsilon)\}$.
Since $V_\sigma$ is regularly varying at zero, Karamata’s theorem implies
\begin{equation}\label{eq:volume-integral-compp-A}
V_\sigma \left(\varepsilon\,\zeta^*(\varepsilon)\right)
=
O_{\varepsilon\to0} \left(
\int_{\varepsilon\zeta^*(\varepsilon)}^\infty
\frac{V_\sigma(\lambda)}{\lambda}\,d\lambda
\right).
\end{equation}
Because $\zeta(\varepsilon)\to1$ as $\varepsilon\to0$, it follows from
\eqref{eq:volume-integral-compp-A} that
\begin{equation}\label{eq:growth-control-volume-A}
V_\sigma \left(\varepsilon\,\zeta^*(\varepsilon)\right)
|1-\zeta(\varepsilon)|
=
o_{\varepsilon\to0}\!\left(
\int_{\varepsilon\zeta^*(\varepsilon)}^\infty
\frac{V_\sigma(\lambda)}{\lambda}\,d\lambda
\right).
\end{equation}
Combining
\textcolor{purple}{\eqref{eq:split-integral-zeta-proof-A1}--\eqref{eq:split-integral-zeta-proof-B}},
\eqref{eq:bound-remainder-volume-integral-A},
and \eqref{eq:growth-control-volume-A} therefore yields
\begin{equation}\label{eq:useful-relation3-A}
H_{T_1}\!\left(\varepsilon\,\zeta(\varepsilon)\right)
\sim
H_{T_1}(\varepsilon),
\qquad \varepsilon\to0 .
\end{equation}
The same argument, with $\zeta$ replaced by $\zeta'$, gives
\begin{equation}\label{eq:useful-relation3-bis-A}
H_{T_1}\!\left(\varepsilon\,\zeta'(\varepsilon)\right)
\sim
H_{T_1}(\varepsilon),
\qquad \varepsilon\to0 .
\end{equation}
Finally, combining
\eqref{eq:useful-relation1},
\eqref{eq:asymp-upper},
\eqref{eq:useful-relation3-A},
and \eqref{eq:useful-relation3-bis-A},
we conclude that
\[
H_{T_1}(\varepsilon)
\sim
H_{T_2}(\varepsilon),
\qquad \varepsilon\to0,
\]
which completes the proof of Lemma~\ref{lem:technical-lemma}.
\end{proof}

\color{black}

\subsection{Proof of Corollary~\ref{cor:ME-Main}}\label{sec:proof-ME-Main}

From the definition \eqref{eq:definition-volume-function} of $V_{\symbolnot}$, we have
\begin{align*}
\int_{\varepsilon}^{\infty}\frac{V_{\symbolnot}(\lambda)}{\lambda}\,d\lambda
&=
\int_{\varepsilon}^{\infty}
\int_{\mathbb{R}^{2d}}
\frac{\mathbbm{1}_{\{\symbolnot(x,\omega)>\lambda\}}}{\lambda}
\,dx\,d\omega\,d\lambda \\
&=
\int_{0}^{\infty}
\int_{\mathbb{R}^{2d}}
\frac{\mathbbm{1}_{\{\symbolnot(x,\omega)>\lambda>\varepsilon\}}}{\lambda}
\,dx\,d\omega\,d\lambda ,
\qquad \varepsilon>0.
\end{align*}
Since the integrand is nonnegative, Tonelli’s 
theorem yields
\[
\int_{0}^{\infty}
\int_{\mathbb{R}^{2d}}
\frac{\mathbbm{1}_{\{\symbolnot(x,\omega)>\lambda>\varepsilon\}}}{\lambda}
\,dx\,d\omega\,d\lambda
=
\int_{\mathbb{R}^{2d}}
\int_{0}^{\infty}
\frac{\mathbbm{1}_{\{\symbolnot(x,\omega)>\lambda>\varepsilon\}}}{\lambda}
\,d\lambda\,dx\,d\omega, \qquad \varepsilon>0.
\]
For fixed $(x,\omega)\in\mathbb{R}^{2d}$, the inner integral can be computed explicitly:
\[
\int_{0}^{\infty}
\frac{\mathbbm{1}_{\{\symbolnot(x,\omega)>\lambda>\varepsilon\}}}{\lambda}
\,d\lambda
=
\ln_{+}\!\left(\frac{\symbolnot(x,\omega)}{\varepsilon}\right),
\]
where $\ln_{+}(t)=\max\{\ln (t),0\}$.
Consequently,
\[
\int_{\varepsilon}^{\infty}\frac{V_{\symbolnot}(\lambda)}{\lambda}\,d\lambda
=
\int_{\mathbb{R}^{2d}}
\ln_{+}\!\left(\frac{\symbolnot(x,\omega)}{\varepsilon}\right)
\,dx\,d\omega ,
\qquad \varepsilon>0.
\]
The claim now follows directly from Theorem~\ref{thm:main-result}.

\subsection{Proof of Corollary~\ref{cor:MR-Main}}\label{sec:proof-MR-Main}

We combine the theory of regular variation
(see, e.g., \cite{binghamRegularVariation1987})
with Theorem~\ref{thm:main-result} to derive the asymptotics of the
type-$\tau$ integrals \eqref{eq:type-tau-integrals}.
Specifically, for $\tau=2$ and $\tau=3$, definition \eqref{eq:type-tau-integrals} gives
\[
I_2(\varepsilon)
=
\int_{\varepsilon}^{\infty}
M_{\symbolnot}(\lambda)\,\lambda^{-2}\,d\lambda,
\qquad
I_3(\varepsilon)
=
\int_{\varepsilon}^{\infty}
M_{\symbolnot}(\lambda)\,\lambda^{-3}\,d\lambda.
\]
By Theorem~\ref{thm:main-result}, we have
\[
H_{\symbolnot}(\varepsilon)
\sim
\int_{\varepsilon}^{\infty}
\frac{V_{\symbolnot}(\lambda)}{\lambda}\,d\lambda,
\qquad \varepsilon\to 0,
\]
for all quantizations. Since $V_{\symbolnot}$ is regularly varying at zero,
condition~(RC) holds and \cite[Theorem~2]{allard2025metricentropyminimaxrisk} yields
\[
H_{\symbolnot}(\varepsilon)\sim I_1(\varepsilon),
\]
which, in turn, implies
\[
I_1(\varepsilon)
\sim
\int_{\varepsilon}^{\infty}
\frac{V_{\symbolnot}(\lambda)}{\lambda}\,d\lambda,
\qquad \varepsilon\to 0.
\]
Using integration by parts, we obtain
\[
I_2(\varepsilon)
=
\varepsilon^{-1}I_1(\varepsilon)
-
\int_{\varepsilon}^{\infty}
\frac{I_1(\lambda)}{\lambda^2}\,d\lambda,
\]
and
\[
I_3(\varepsilon)
=
\varepsilon^{-2}I_1(\varepsilon)
-
2\int_{\varepsilon}^{\infty}
\frac{I_1(\lambda)}{\lambda^3}\,d\lambda.
\]
Since $I_1$ is regularly varying at zero, another application of
Karamata’s theorem implies
\[
I_2(\varepsilon)
\sim
\int_{\varepsilon}^{\infty}
\frac{V_{\symbolnot}(\lambda)}{\lambda^{2}}\,d\lambda,
\qquad
I_3(\varepsilon)
\sim
\int_{\varepsilon}^{\infty}
\frac{V_{\symbolnot}(\lambda)}{\lambda^{3}}\,d\lambda,
\qquad \varepsilon\to 0.
\]
In particular, by \cite[Theorem~4]{allard2025metricentropyminimaxrisk},
\begin{equation}\label{eq:proof-risk-1}
R_{\symbolnot}(\kappa)
\sim
\kappa^2\,\varepsilon_\kappa
\int_{\varepsilon_\kappa}^{\infty}\frac{V_{\symbolnot}(\lambda)}{\lambda^2}\,d\lambda,
\qquad \kappa\to0,
\end{equation}
where the critical radius $\varepsilon_\kappa$ is determined by
\begin{equation}\label{eq:proof-risk-4}
\kappa^2
\int_{\varepsilon_\kappa}^{\infty}
V_{\symbolnot}(\lambda)
\left(\frac{2}{\lambda^3}-\frac{1}{\lambda^2\varepsilon_\kappa}\right)
\,d\lambda
=1,
\qquad \kappa>0.
\end{equation}
We next compute the integrals appearing in \eqref{eq:proof-risk-1} and
\eqref{eq:proof-risk-4} explicitly.
From the definition \eqref{eq:definition-volume-function} of $V_{\symbolnot}$,
\begin{align*}
\int_{\varepsilon}^{\infty}\frac{V_{\symbolnot}(\lambda)}{\lambda^2}\,d\lambda
&=
\int_{\varepsilon}^{\infty}
\int_{\mathbb{R}^{2d}}
\frac{\mathbbm{1}_{\{\symbolnot(x,\omega)>\lambda\}}}{\lambda^2}
\,dx\,d\omega\,d\lambda \\
&=
\int_{0}^{\infty}
\int_{\mathbb{R}^{2d}}
\frac{\mathbbm{1}_{\{\symbolnot(x,\omega)>\lambda>\varepsilon\}}}{\lambda^2}
\,dx\,d\omega\,d\lambda .
\end{align*}
Since the integrand is nonnegative, Tonelli’s theorem allows us to exchange the
order of integration, giving
\[
\int_{\mathbb{R}^{2d}}
\int_{0}^{\infty}
\frac{\mathbbm{1}_{\{\symbolnot(x,\omega)>\lambda>\varepsilon\}}}{\lambda^2}
\,d\lambda\,dx\,d\omega .
\]
For fixed $(x,\omega)\in\mathbb{R}^{2d}$, the inner integral can be evaluated explicitly:
\[
\int_{0}^{\infty}
\frac{\mathbbm{1}_{\{\symbolnot(x,\omega)>\lambda>\varepsilon\}}}{\lambda^2}
\,d\lambda
=
\left(\frac{1}{\varepsilon}-\frac{1}{\symbolnot(x,\omega)}\right)_+ .
\]
Consequently,
\begin{equation}\label{eq:proof-risk-2}
\int_{\varepsilon}^{\infty}\frac{V_{\symbolnot}(\lambda)}{\lambda^2}\,d\lambda
=
\int_{\mathbb{R}^{2d}}
\left(\frac{1}{\varepsilon}-\frac{1}{\symbolnot(x,\omega)}\right)_+
\,dx\,d\omega .
\end{equation}
Substituting \eqref{eq:proof-risk-2} into \eqref{eq:proof-risk-1} yields
\eqref{eq:main-res-risk-1-corr}.

A completely analogous computation gives
\begin{equation}\label{eq:proof-risk-3}
\int_{\varepsilon}^{\infty}\frac{V_{\symbolnot}(\lambda)}{\lambda^3}\,d\lambda
=
\int_{\mathbb{R}^{2d}}
\left(\frac{1}{2\varepsilon^2}-\frac{1}{2\symbolnot(x,\omega)^2}\right)_+
\,dx\,d\omega .
\end{equation}
Combining \eqref{eq:proof-risk-2} and \eqref{eq:proof-risk-3}, we obtain
\begin{align*}
\int_{\varepsilon}^{\infty}
V_{\symbolnot}(\lambda)
\left(\frac{2}{\lambda^3}-\frac{1}{\lambda^2\varepsilon}\right)
\,d\lambda
&=
\int_{\mathbb{R}^{2d}}
\left[
\left(\frac{1}{\varepsilon^2}-\frac{1}{\symbolnot(x,\omega)^2}\right)_+
-
\left(\frac{1}{\varepsilon^2}-\frac{1}{\symbolnot(x,\omega)\varepsilon}\right)_+
\right]
dx\,d\omega \\
&=
\int_{\mathbb{R}^{2d}}
\frac{1}{\symbolnot(x,\omega)}
\left(\frac{1}{\varepsilon}-\frac{1}{\symbolnot(x,\omega)}\right)_+
\,dx\,d\omega .
\end{align*}
Inserting this expression into \eqref{eq:proof-risk-4} yields
\eqref{eq:main-res-risk-2-corr}, thereby completing the proof.

\subsection{Proof of Theorem~\ref{thm:sob-sch}}\label{sec:proof-thm-sob-sch}

By \cite[Proposition~25.4]{shubin_pseudodifferential_2001}, $\ballsch$ is compact,
and therefore bounded, in $L^2(\mathbb R^d)$.
We next show that $\ker \tsch=\{0\}$.
If $\tsch\psi=0$ for some nonzero $\psi\in L^2(\R^d)$, then
$\mu\psi\in\ballsch$ for all $\mu\in\R$, which contradicts boundedness of
$\ballsch$ in $L^2(\R^d)$.
Since $\tsch$ is self-adjoint, $\ker \tsch^*=\ker \tsch=\{0\}$, and
\cite[Theorem~25.4]{shubin_pseudodifferential_2001} therefore guarantees the
existence of an inverse $\tsch^{-1}$,
which can be extended via \cite[Theorem~24.4]{shubin_pseudodifferential_2001} to a compact operator 
on 
$L^2(\mathbb R^d)$.
Setting $g=\tsch f$ in \eqref{eq:definition-ballsch}, we obtain
\[
\ballsch
=
\{\tsch^{-1}g:\ \|g\|_{L^2(\mathbb R^d)}\le1\}
=
\tsch^{-1}\mathcal B_2,
\]
where $\mathcal B_2$ denotes the unit ball in $L^2(\mathbb R^d)$.
Consequently,
\begin{equation}\label{eq:equality-metric-entropy-op-entropy-sch}
H\!\left(\varepsilon;\ballsch,\|\cdot\|_{L^2(\mathbb R^d)}\right)
=
H_{\tsch^{-1}}(\varepsilon),
\qquad \varepsilon>0.
\end{equation}

To apply Theorem~\ref{thm:main-result}, we study the volume function
\begin{equation}\label{eq:integral-volume-sch-sob1}
V_{\symbolnotsch^{-1}}(\lambda)
=
\int_{\mathbb R^{2d}}
\mathbbm{1}_{\{(1+c\|x\|_2^2+(2\pi\|\omega\|_2)^2)^{-s/2}>\lambda\}}
\,dx\,d\omega,
\qquad \lambda>0.
\end{equation}
Performing the change of variables $x\mapsto x'/\sqrt c$ and
$\omega\mapsto\omega'/(2\pi)$ yields
\begin{align}
V_{\symbolnotsch^{-1}}(\lambda)
&=
\frac{1}{(2\pi\sqrt c)^d}
\int_{\mathbb R^{2d}}
\mathbbm{1}_{\{(1+\|x'\|_2^2+\|\omega'\|_2^2)^{-s/2}>\lambda\}}
\,dx'\,d\omega' \label{eq:volume-integral-sch-formula1}\\
&=
\frac{1}{(2\pi\sqrt c)^d}
\int_{\mathbb R^{2d}}
\mathbbm{1}_{\{\|x'\|_2^2+\|\omega'\|_2^2\, <\, \lambda^{-2/s}-1\}}
\,dx'\,d\omega'. \label{eq:volume-integral-sch-formula2}
\end{align}
Hence $V_{\symbolnotsch^{-1}}(\lambda)$ is proportional to the volume of a ball of
radius $(\lambda^{-2/s}-1)^{1/2}$ in $\mathbb R^{2d}$, and therefore
\[
V_{\symbolnotsch^{-1}}(\lambda)
=
\frac{\omega_{2d}}{(2\pi\sqrt c)^d}(\lambda^{-2/s}-1)^d
\sim
\frac{\omega_{2d}}{(2\pi\sqrt c)^d}\lambda^{-2d/s},
\qquad \lambda\to0.
\]
A direct calculation further shows that
\begin{equation}\label{eq:integral-sob-vol-1}
\int_{\varepsilon}^{\infty}
\frac{V_{\symbolnotsch^{-1}}(\lambda)}{\lambda}\,d\lambda
\textcolor{blue}{\sim}
\frac{s\,\omega_{2d}}{2d(2\pi\sqrt c)^d}\,\varepsilon^{-2d/s},
\qquad \varepsilon \textcolor{blue}{\to} 0.
\end{equation}
The symbol $\symbolnotsch^{-1}$ satisfies the assumptions of
Theorem~\ref{thm:main-result}.
Applying successively
\eqref{eq:equality-metric-entropy-op-entropy-sch},
\eqref{eq:main-result-part2},
\eqref{eq:main-res-volume-integral},
and \eqref{eq:integral-sob-vol-1}, we obtain
\begin{align*}
H\!\left(\varepsilon;\ballsch,\|\cdot\|_{L^2(\mathbb R^d)}\right)
&=
H_{\tsch^{-1}}(\varepsilon)
\sim
H_{\symbolnotsch^{-1}}(\varepsilon)\\
&\sim
\int_{\varepsilon}^{\infty}
\frac{V_{\symbolnotsch^{-1}}(\lambda)}{\lambda}\,d\lambda
\sim
\frac{s\,\omega_{2d}}{2d(2\pi\sqrt c)^d}\,\varepsilon^{-2d/s},
\qquad \varepsilon\to0,
\end{align*}
which establishes \eqref{eq:pinsker-entropy}.

Finally, by \cite[Appendix~A, Table~(iv)]{allard2025metricentropyminimaxrisk}, metric entropy scaling of the form 
\[
H(\varepsilon)\sim \frac{\mathfrak c\,\varepsilon^{-\alpha}}{\alpha},
\qquad \varepsilon\to0,
\]
with $\mathfrak c,\alpha>0$, is equivalent to minimax risk scaling
\[
R_\kappa
\sim
\frac{\alpha+2}{\alpha}
\left(
\frac{\mathfrak c\,\alpha\,\kappa^2}{(\alpha+1)(\alpha+2)}
\right)^{\!\frac{2}{\alpha+2}},
\qquad \kappa\to0.
\]
In view of \eqref{eq:pinsker-entropy}, for $\ballsch$ we may choose
\[
\mathfrak c=\frac{\omega_{2d}}{(2\pi\sqrt c)^d},
\qquad
\alpha=\frac{2d}{s},
\]
which yields
\[
R_\kappa(\ballsch)
\sim
\frac{d+s}{d}
\left(
\frac{d\,s\,\omega_{2d}\,\kappa^2}
{(2\pi\sqrt c)^d(d+s)(2d+s)}
\right)^{\!\frac{s}{d+s}},
\qquad \kappa\to0.
\]
This establishes \eqref{eq:pinsker-unbounded} and completes the proof of
Theorem~\ref{thm:sob-sch}.

\subsection{Proof of Theorem~\ref{thm:sob-weighted}}\label{sec:proof-sob-weighted}

It follows from Rellich's compactness theorem (see \cite[Theorem~2.17 and Exercise~2.12]{hinzmicrolocal}) that $\ballweight$ is compact in $L^2(\R^d)$.
Proceeding as in the proof of Theorem~\ref{thm:sob-sch}, with
$\symbolnotsobw$ in place of $\symbolnotsch$,
we reduce the problem to determining the asymptotic behavior of
\begin{equation}\label{eq:integral-volume-weighted-sob1}
V_{\symbolnotsobw^{-1}}(\lambda)
=
\frac{1}{(2\pi\sqrt{c})^{d}}
\int_{\mathbb{R}^{2d}}
\mathbbm{1}_{\{(1+\|\omega'\|_2^2)^{-s/2}(1+\|x'\|_2^2)^{-r/2}>\lambda\}}
\,dx'\,d\omega',
\qquad \lambda>0.
\end{equation}
We rewrite the integral in polar coordinates to get
\[
\int_0^\infty\!\!\int_0^\infty
\mathbbm{1}_{\{(1+u_0^2)^{-s/2}(1+v_0^2)^{-r/2}>\lambda\}}
\,s_{d-1}^2 (u_0v_0)^{d-1}\,du_0\,dv_0,
\]
where $s_{d-1}$ denotes the surface measure of the unit sphere in $\mathbb{R}^d$.
With the change of variables $u=u_0^d$ and $v=v_0^d$, we get
\begin{equation}\label{eq:double-integral-sob-var-ch}
    V_{\symbolnotsobw^{-1}}(\lambda)
    =
    \frac{\omega_d^2}{(2\pi\sqrt{c})^{d}}
    \int_0^\infty\!\!\int_0^\infty
    \mathbbm{1}_{\{(1+u^{2/d})^{-s/2}(1+v^{2/d})^{-r/2}>\lambda\}}
    \,du\,dv,
\end{equation}
using $\omega_d = s_{d-1}/d$.


Let us first assume that $r<s$. For $v\ge0$, the inequality
$(1+u^{2/d})^{-s/2}(1+v^{2/d})^{-r/2}>\lambda$
has solutions in $u$ only if
$\lambda(1+v^{2/d})^{r/2} < 1 $. Solving this inequality for $v$
gives $v < (\lambda^{-2/r}-1)^{d/2}$, and we therefore define
\begin{equation}\label{eq:def-v-lambda}
    v_\lambda \coloneqq (\lambda^{-2/r}-1)^{d/2}.
\end{equation}
Fix $v\in[0,v_\lambda)$. Then the condition
$(1+u^{2/d})^{-s/2}(1+v^{2/d})^{-r/2}>\lambda$
is equivalent to
$1+u^{2/d}<\lambda^{-2/s}(1+v^{2/d})^{-r/s}$,
and hence to
\[
u < \bigl(\lambda^{-2/s}(1+v^{2/d})^{-r/s}-1\bigr)^{d/2}.
\]
Consequently, integrating first with respect to $u$ gives
\[
V_{\symbolnotsobw^{-1}}(\lambda)
=
\frac{\omega_d^2}{(2\pi\sqrt{c})^{d}}
\int_0^{v_\lambda}
\left(
\lambda^{-2/s}(1+v^{2/d})^{-r/s}-1
\right)^{d/2}
\,dv,
\qquad \lambda>0.
\]
After the change of variables $t = v/v_\lambda$, the integral becomes
\begin{equation}\label{eq:vsigma-integral}
V_{\symbolnotsobw^{-1}}(\lambda)
=
\frac{\omega_d^2\, v_\lambda}{(2\pi\sqrt{c})^{d}}
\int_0^{1} u_\lambda(t)\,dt,
\qquad \lambda>0,
\end{equation}
where
\begin{equation}\label{eq:def-u-lambda}
u_\lambda(t)
\coloneqq
\left(
\lambda^{-2/s}\left(1+(t v_\lambda)^{2/d}\right)^{-r/s}-1
\right)^{d/2},
    \qquad t\in[0,1).
\end{equation}
Note that the integral \eqref{eq:vsigma-integral} includes the endpoint $t=1$, although the admissible region corresponds to $t\in[0,1)$. This is justified since the singleton $\{1\}$ has Lebesgue measure zero and $u_\lambda$ extends continuously to $[0,1]$ with $u_\lambda(1)=0$.
From the definition of $v_\lambda$ in \eqref{eq:def-v-lambda} we have $v_\lambda \sim \lambda^{-d/r}$, as 
$\lambda \to 0$. Letting $\gamma \coloneqq 2r/(ds)$ and using \eqref{eq:def-u-lambda}, we obtain the
pointwise limit
\[
u_\lambda(t) \longrightarrow
\left(t^{-\gamma}-1\right)^{d/2},
\qquad \lambda\to0,
\quad t\in(0,1).
\]
Since $r<s$, we have $\gamma<2/d$, and therefore the function
$t\mapsto (t^{-\gamma}-1)^{d/2}$ is integrable on $(0,1)$.
Moreover,
\[
\left(1+(t v_\lambda)^{2/d}\right)^{-r/s}
\le
t^{-\gamma}\left(1+v_\lambda^{2/d}\right)^{-r/s}
=
t^{-\gamma}\lambda^{2/s},
\qquad \lambda>0,\; t\in(0,1),
\]
which implies
\[
u_\lambda(t)
\le
\left(t^{-\gamma}-1\right)^{d/2}.
\]
Since $u_\lambda(t)\to (t^{-\gamma}-1)^{d/2}$ pointwise on $(0,1)$ as
$\lambda\to0$, and $|u_\lambda(t)|\le (t^{-\gamma}-1)^{d/2}$ with the latter
integrable on $(0,1)$, the hypotheses of the dominated convergence theorem are
satisfied. Consequently,
\[
\int_0^{1} u_\lambda(t)\,dt
\;\longrightarrow\;
\int_0^{1} (t^{-\gamma}-1)^{d/2}\,dt,
\qquad \lambda\to0 .
\]
Using the representation of $V_{\symbolnotsobw^{-1}}(\lambda)$ derived above
and applying the dominated convergence theorem, we obtain
\begin{equation}\label{eq:first-asymp-vol-r-leq-s}
V_{\symbolnotsobw^{-1}}(\lambda)
=
\frac{\omega_d^2\, v_\lambda}{(2\pi\sqrt{c})^{d}}
\int_0^{1} u_\lambda(t)\,dt
\;\sim\;
\frac{\omega_d^2\, v_\lambda}{(2\pi\sqrt{c})^{d}}
\int_0^{1} (t^{-\gamma}-1)^{d/2} dt,
\qquad \lambda \to 0 .
\end{equation}
Changing variables according to $t_0=t^\gamma$ in the integral on the
right-hand side of \eqref{eq:first-asymp-vol-r-leq-s}, we obtain
\begin{equation}\label{eq:binet-1}
    \int_0^{1} (t^{-\gamma}-1)^{d/2} dt
    =
    \frac{1}{\gamma}
    \int_0^{1}
    t_0^{\frac{1}{\gamma}-1-\frac{d}{2}}
    (1-t_0)^{d/2}
    \,dt_0
    =
    \frac{1}{\gamma}
    B \left(\frac{1}{\gamma}-\frac{d}{2}, \frac{d}{2}+1\right),
\end{equation}
where $B$ denotes Euler’s Beta function. Using the relation between the Beta
and Gamma functions (see \cite[Chapter~2]{artin}), 
we further get 
\begin{equation}\label{eq:binet-2}
    \frac{1}{\gamma}
    B \left(\frac{1}{\gamma}-\frac{d}{2}, \frac{d}{2}+1\right)
    =
    \frac{
        \Gamma \left(\frac{1}{\gamma}-\frac{d}{2}\right)
        \Gamma \left(\frac{d}{2}+1\right)
    }{
        \gamma\,\Gamma \left(\frac{1}{\gamma}+1\right)
    }
    =
    \frac{
        d\,\Gamma \left(\frac{d(s-r)}{2r}\right)\Gamma \left(\frac{d}{2}\right)
    }{
        2\,\Gamma \left(\frac{ds}{2r}\right)
    }.
\end{equation}
Combining \eqref{eq:def-v-lambda}, \eqref{eq:first-asymp-vol-r-leq-s},
\eqref{eq:binet-1}, and \eqref{eq:binet-2}, we conclude that
\begin{equation}\label{eq:final-asymp-vol-r-leq-s}
    V_{\symbolnotsobw^{-1}}(\lambda)
    \sim
    \frac{
        d\,\omega_d^2\,
        \Gamma \left(\frac{d(s-r)}{2r}\right)
        \Gamma \left(\frac{d}{2}\right)
    }{
        2(2\pi\sqrt{c})^{d}\,
        \Gamma \left(\frac{ds}{2r}\right)
    }
    \lambda^{-d/r},
    \qquad \lambda \to 0,
\end{equation}
in the case $r<s$.

By symmetry, the case $r>s$ is treated in the same way after interchanging the
roles of $r$ and $s$ throughout the preceding argument. In particular, the
auxiliary quantities $v_\lambda$, $u_\lambda$, and $\gamma$ are redefined with
$r$ and $s$ swapped, so that $\gamma=2s/(dr)$. This yields
\begin{equation}\label{eq:final-asymp-vol-r-geq-s}
    V_{\symbolnotsobw^{-1}}(\lambda)
    \sim \frac{d \,\omega_d^2\,  \Gamma\left(\frac{d(r-s)}{2s}\right) \Gamma \left( \frac{d}{2} \right)}{2(2\pi\sqrt{c})^{d}\, \Gamma \left(\frac{dr}{2s}\right)}\lambda^{-d/s},
    \quad \lambda \to 0.
\end{equation}

For $r=s$, we first observe that
\[
(1+u^{2/d})^{-s/2}\sim(1+u)^{-s/d},
\qquad u\to\infty .
\]
Hence, for every fixed $\eta\in(0,1)$ there exists $R_\eta>0$ such that
\begin{equation}\label{eq:r-eta-inequalities}
(1-\eta)(1+u^{2/d})^{-s/2}
\le
(1+u)^{-s/d}
\le
(1+\eta)(1+u^{2/d})^{-s/2},
\qquad u\ge R_\eta ,
\end{equation}
and the same estimates hold with $u$ replaced by $v$. Now, whenever
$u,v\ge R_\eta$, \eqref{eq:r-eta-inequalities} and its
version with $u$ replaced by $v$ imply that
\begin{align*}
    (1-\eta)^2 (1+u^{2/d})^{-s/2}(1+v^{2/d})^{-s/2}
&\le
(1+u)^{-s/d}(1+v)^{-s/d} \\
&\le
(1+\eta)^2 (1+u^{2/d})^{-s/2}(1+v^{2/d})^{-s/2}.
\end{align*}
Consequently, for $u,v\ge R_\eta$, we obtain the set inclusions
\begin{equation}\label{eq:set-inclusion-1}
\{(1+u^{2/d})^{-s/2}(1+v^{2/d})^{-s/2}>\lambda\}
\subseteq
\{(1+u)^{-s/d}(1+v)^{-s/d}>(1-\eta)^2\lambda\}
\end{equation}
and
\begin{equation}\label{eq:set-inclusion-2}
\{(1+u)^{-s/d}(1+v)^{-s/d}>(1+\eta)^2\lambda\}
\subseteq
\{(1+u^{2/d})^{-s/2}(1+v^{2/d})^{-s/2}>\lambda\}.
\end{equation}
We next decompose the double integral \eqref{eq:double-integral-sob-var-ch} defining
$V_{\symbolnotsobw^{-1}}(\lambda)$ according to
\[
V_{\symbolnotsobw^{-1}}(\lambda)
=
\frac{\omega_d^2}{(2\pi\sqrt{c})^{d}}
\left(
\int_{0}^{R_\eta}\!\!\int_{0}^{R_\eta}
+
2\int_{R_\eta}^{\infty}\!\!\int_{0}^{R_\eta}
+
\int_{R_\eta}^{\infty}\!\!\int_{R_\eta}^{\infty}
\right)
\mathbbm{1}_{\{(1+u^{2/d})^{-s/2}(1+v^{2/d})^{-s/2}>\lambda\}}
\,du\,dv .
\]
The first integral satisfies
\begin{equation}\label{eq:first-integral}
\int_{0}^{R_\eta}\!\!\int_{0}^{R_\eta}
\mathbbm{1}_{\{(1+u^{2/d})^{-s/2}(1+v^{2/d})^{-s/2}>\lambda\}}
\,du\,dv
\longrightarrow
R_\eta^2,
\qquad \lambda\to0 .
\end{equation}
Next, for the mixed region we have
\begin{align}\label{eq:second-integral}
\int_{R_\eta}^{\infty}\!\int_{0}^{R_\eta}
\mathbbm{1}_{\{(1+u^{2/d})^{-s/2}(1+v^{2/d})^{-s/2}>\lambda\}}
\,du\,dv \nonumber
&\le
R_\eta
\int_{R_\eta}^{\infty}
\mathbbm{1}_{\{(1+v^{2/d})^{-s/2}>\lambda\}}\,dv \\
&=
O_{\lambda\to0} \left(\lambda^{-d/s}\right).
\end{align}
It remains to analyze the tail region
\[
\int_{R_\eta}^{\infty}\!\!\int_{R_\eta}^{\infty}
\mathbbm{1}_{\{(1+u^{2/d})^{-s/2}(1+v^{2/d})^{-s/2}>\lambda\}}
\,du\,dv .
\]
To this end, we introduce the tail part of the corresponding 
comparison integral
\[
\bar I_\eta(\lambda)
:=
\int_{R_\eta}^{\infty}\!\!\int_{R_\eta}^{\infty}
\mathbbm{1}_{\{(1+u)^{-s/d}(1+v)^{-s/d}>\lambda\}}
\,du\,dv .
\]
For $\lambda$ sufficiently small, the admissible region in this tail integral is given by
\[
R_\eta \le v < \frac{\lambda^{-d/s}}{1+R_\eta}-1,
\qquad
R_\eta \le u < \frac{\lambda^{-d/s}}{1+v}-1,
\]
and therefore
\[
\bar I_\eta(\lambda)
=
\int_{R_\eta}^{\frac{\lambda^{-d/s}}{1+R_\eta}-1}
\left(\frac{\lambda^{-d/s}}{1+v}-1-R_\eta\right)\,dv
=
\frac{d}{s}\lambda^{-d/s}\ln(\lambda^{-1})
+ O_{\lambda\to0} \left(\lambda^{-d/s}\right),
\]
so that
\begin{equation}\label{eq:tail-comparison-integral}
\bar I_\eta(\lambda)
\sim
\frac{d}{s}\lambda^{-d/s}\ln(\lambda^{-1}),
\qquad \lambda\to0 .
\end{equation}
We now use the set inclusions \eqref{eq:set-inclusion-1} and
\eqref{eq:set-inclusion-2} to get
\[
\bar I_\eta \bigl((1+\eta)^2\lambda\bigr)
\;\le\;
\int_{R_\eta}^{\infty}\!\int_{R_\eta}^{\infty}
\mathbbm{1}_{\{(1+u^{2/d})^{-s/2}(1+v^{2/d})^{-s/2}>\lambda\}}
\,du\,dv
\]
and
\[
\int_{R_\eta}^{\infty}\!\int_{R_\eta}^{\infty}
\mathbbm{1}_{\{(1+u^{2/d})^{-s/2}(1+v^{2/d})^{-s/2}>\lambda\}}
\,du\,dv
\;\le\;
\bar I_\eta \bigl((1-\eta)^2\lambda\bigr).
\]
By \eqref{eq:tail-comparison-integral}, this implies
\[
(1+\eta)^{-2d/s}
\le
\liminf_{\lambda\to0}
\frac{
\displaystyle
\int_{R_\eta}^{\infty}\!\!\int_{R_\eta}^{\infty}
\mathbbm{1}_{\{(1+u^{2/d})^{-s/2}(1+v^{2/d})^{-s/2}>\lambda\}}
\,du\,dv
}{
\frac{d}{s}\lambda^{-d/s}\ln(\lambda^{-1})
}
\]
and
\[
\limsup_{\lambda\to0}
\frac{
\displaystyle
\int_{R_\eta}^{\infty}\!\!\int_{R_\eta}^{\infty}
\mathbbm{1}_{\{(1+u^{2/d})^{-s/2}(1+v^{2/d})^{-s/2}>\lambda\}}
\,du\,dv
}{
\frac{d}{s}\lambda^{-d/s}\ln(\lambda^{-1})
}
\le
(1-\eta)^{-2d/s}.
\]
Since $\eta\in(0,1)$ is arbitrary, letting $\eta\downarrow0$ yields
\begin{equation}\label{eq:limit-double-int-simple-form-2}
\int_{R_\eta}^{\infty}\!\!\int_{R_\eta}^{\infty}
\mathbbm{1}_{\{(1+u^{2/d})^{-s/2}(1+v^{2/d})^{-s/2}>\lambda\}}
\,du\,dv
\sim
\frac{d}{s}\lambda^{-d/s}\ln(\lambda^{-1}),
\qquad \lambda\to0.
\end{equation}
Finally, combining \eqref{eq:first-integral}, \eqref{eq:second-integral}, and \eqref{eq:limit-double-int-simple-form-2}, we can conclude that
\begin{equation}\label{eq:asymp-vol-case-r-equals-s}
V_{\symbolnotsobw^{-1}}(\lambda)
\sim
\frac{d\,\omega_d^2}{s(2\pi\sqrt{c})^{d}}
\lambda^{-d/s}\ln \left(\lambda^{-1}\right),
\qquad \lambda\to0.
\end{equation}
We now apply Theorem~\ref{thm:main-result} with
$\symbolnot=\symbolnotsobw^{-1}$ to obtain
\begin{equation}\label{eq:appli-thm1-sob2}
    H_{\symbolnotsobw^{-1}}(\varepsilon)
    \sim
    \int_\varepsilon^\infty
    \frac{V_{\symbolnotsobw^{-1}}(\lambda)}{\lambda}\,d\lambda,
    \qquad \varepsilon\to0.
\end{equation}
Combining \eqref{eq:final-asymp-vol-r-leq-s},
\eqref{eq:final-asymp-vol-r-geq-s},
\eqref{eq:asymp-vol-case-r-equals-s},
and \eqref{eq:appli-thm1-sob2}, we arrive at
\[
    H_{\symbolnotsobw^{-1}}(\varepsilon)
    \sim
    \frac{\omega_d^2}{(2\pi\sqrt{c})^{d}}
    \begin{dcases}
        \Xi_{r,s,d}\,\varepsilon^{-d/\min\{r,s\}},
        & \text{if } r\neq s,\\[.25cm]
        \varepsilon^{-d/s}\ln(\varepsilon^{-1}),
        & \text{if } r=s,
    \end{dcases}
    \qquad \varepsilon\to0,
\]
where
\[
    \Xi_{r,s,d}
    \coloneqq
    \frac{
        \min\{r,s\}\,
        \Gamma \left(\frac{d|s-r|}{2\min\{r,s\}}\right)
        \Gamma \left(\frac{d}{2}\right)
    }{
        2\,
        \Gamma \left(\frac{d\max\{r,s\}}{2\min\{r,s\}}\right)
    }.
\]
Finally,
\[
H \left(\varepsilon;\mathcal{B}_{\mathcal{W}}^{(s,r)},\|\cdot\|_{L^2(\mathbb R^d)}\right)
\sim
H_{\symbolnotsobw^{-1}}(\varepsilon),
\qquad \varepsilon \to 0,
\]
which completes the proof.
\color{black}

\section*{Acknowledgments}

The authors thank A.~Künzi for useful suggestions.

\bibliography{main}

\appendix

\section{Review of Spectral Theory for Pseudodifferential Operators}
\label{sec:review-lit}

This appendix reviews the spectral-theoretic material required for the
development of our results.
We begin by recalling the Kohn--Nirenberg representation of a
pseudodifferential operator.
With the Fourier transform defined by
\begin{equation}\label{eq:definition-fourier-transform}
    \hat f(\omega)
    =
    \int_{\R^d} f(y)\, e^{-2\pi i\, y\cdot \omega}\,\diff y,
    \qquad \omega\in\R^d,
\end{equation}
the operator $T_\sigma$ associated with a symbol $\sigma$ admits the integral
representation
\begin{equation}\label{eq:kohn-nirenberg-def2}
    (T_\sigma f)(x)
    =
    \int_{\R^{2d}} \sigma(x,\omega)\,
    e^{2\pi i (x-y)\cdot\omega}\,
    f(y)\,\diff y\,\diff\omega,
    \qquad x\in\R^d.
\end{equation}

Following standard practice (see, e.g.,
\cite[Definition~23.1]{shubin_pseudodifferential_2001}),
we assume throughout that the symbols under consideration satisfy appropriate
regularity and growth conditions.
Specifically, for $N\in\N^*$ (typically $N=2d$ or $N=3d$ below), $m\in\R$, and
$\rho\in(0,1]$, we define the symbol class
$\psidosymbclass(\R^N)$ as the set of real-valued functions
$\sigma\in C^\infty(\R^N)$ such that, for every multi-index
$\alpha\in\N^N$, there exists a constant $C_\alpha>0$ with
\begin{equation}\label{eq:decay-cond-symbol-classs}
    \bigl|\partial^\alpha \sigma(z)\bigr|
    \le
    C_\alpha\,\bigl(1+\|z\|_2^2\bigr)^{\frac{m-\rho|\alpha|}{2}},
    \qquad z\in\R^N.
\end{equation}

For later use, it is convenient to work with a slightly more general integral
representation than \eqref{eq:kohn-nirenberg-def2}, allowing the symbol to depend
on the integration variable $y$ as well.
Specifically, we consider operators of the form
\begin{equation}\label{eq:amplitude-integral-representation}
    (T_{\bar\sigma} f)(x)
    =
    \int_{\R^{2d}} \bar\sigma(x,y,\omega)\,
    e^{2\pi i (x-y)\cdot\omega}\,
    f(y)\,\diff y\,\diff\omega,
\end{equation}
where $\bar\sigma\in\psidosymbclass(\R^{3d})$ is referred to as the
amplitude of the operator. The precise definition of an amplitude varies somewhat across the literature;
see, for example, \cite[Definition~23.3]{shubin_pseudodifferential_2001} and
\cite[Definition~2.1]{treves2013psido}.
The choice adopted here—namely, requiring $\bar\sigma$ to belong to
$\psidosymbclass(\R^{3d})$—is slightly more restrictive than
\cite[Definition~23.3]{shubin_pseudodifferential_2001}, but this distinction plays
no role in the arguments below. Since $\bar\sigma\in\psidosymbclass(\R^{3d})$, it satisfies 
\eqref{eq:decay-cond-symbol-classs} with $N=3d$.
Operators of the form \eqref{eq:amplitude-integral-representation} with amplitudes $\bar\sigma\in\psidosymbclass(\R^{3d})$ define the
class $\psidoopclass(\R^d)$.
Finally, we set
\[
\Psi^{-\infty}
\;\coloneqq\;
\bigcap_{m\in\R}\psidoopclass(\R^d),
\]
which can be shown to be independent of the parameter $\rho$; see the discussion following
\cite[Definition~23.4]{shubin_pseudodifferential_2001}.

The amplitude formulation \eqref{eq:amplitude-integral-representation} provides a
unified framework encompassing several standard quantization schemes.
The Kohn--Nirenberg representation \eqref{eq:kohn-nirenberg-def2} is recovered when
the amplitude is independent of the variable \(y\), that is, when there exists a
symbol \(\sigma_L\in\Gamma_\rho^m(\R^{2d})\) such that
\[
\bar\sigma(x,y,\omega)=\sigma_L(x,\omega).
\]
In this case, \(\sigma_L\) is referred to as the Kohn--Nirenberg symbol (or
left symbol) of the operator \(T_{\bar\sigma}\).
If \(\sigma_W\in\Gamma_\rho^m(\R^{2d})\) and
\[
\bar\sigma(x,y,\omega)=\sigma_W \left(\tfrac{x+y}{2},\omega\right),
\]
then \(T_{\bar\sigma}\) coincides with the Weyl quantization of the symbol
\(\sigma_W\), which is therefore called the Weyl symbol of the operator.
Similarly, when the amplitude depends only on the second spatial variable,
\[
\bar\sigma(x,y,\omega)=\sigma_R(y,\omega),
\qquad \sigma_R\in\Gamma_\rho^m(\R^{2d}),
\]
the function \(\sigma_R\) is referred to as the right symbol of
\(T_{\bar\sigma}\).
Conversely, every operator \(T_{\bar\sigma}\in\Psi_\rho^m(\R^d)\) admits equivalent
representations in terms of a left symbol, a Weyl symbol, and a right symbol; see
\cite[Theorem~23.1]{shubin_pseudodifferential_2001}.

A central theme in the spectral theory of pseudodifferential operators is that the
asymptotic behavior of the eigenvalue-counting function can be characterized in
terms of the associated symbol, independently of the chosen quantization
(Kohn--Nirenberg, Weyl, or right).
We highlight two results of this type.
The first, stated as Theorem~\ref{thm-shubin}, is a classical Weyl-type result for
hypoelliptic pseudodifferential operators and is included here for context.
The second, Theorem~\ref{thm:Dauge-Robert}, concerns compact pseudodifferential
operators and provides the spectral characterization used in the proof of our
main result.

To state Theorem~\ref{thm-shubin}, we first introduce the notion of a
hypoelliptic symbol; see, for example,
\cite[Definition~25.1]{shubin_pseudodifferential_2001}.
A real-valued function $\sigma\in C^\infty(\R^{2d})$ is said to be hypoelliptic if
there are constants $C_1,C_2,\rho,R>0$ and real numbers $m_+\ge m_-$ such that,
for every multi-index $\alpha\in\N^{2d}$, one can find a constant $C_\alpha>0$
for which
\begin{equation}\label{eq:bound-def-hypoellip}
    C_1\bigl(1+\|z\|_2^2\bigr)^{\frac{m_-}{2}}
    \le
    |\sigma(z)|
    \le
    C_2\bigl(1+\|z\|_2^2\bigr)^{\frac{m_+}{2}},
    \quad\text{and}\quad
    \frac{|\partial^\alpha\sigma(z)|}{|\sigma(z)|}
    \le
    C_\alpha\bigl(1+\|z\|_2^2\bigr)^{-\frac{\rho|\alpha|}{2}},
\end{equation}
for all $z\in\R^{2d}$ with $\|z\|_2\ge R$.
We denote this class of symbols by $\hyposymbclass(\R^{2d})$.
By direct comparison of \eqref{eq:bound-def-hypoellip} with the estimate
\eqref{eq:decay-cond-symbol-classs}, every hypoelliptic symbol is a symbol in the
sense of $\Gamma_\rho^{m_+}(\R^{2d})$, that is,
$\hyposymbclass(\R^{2d})\subset\Gamma_\rho^{m_+}(\R^{2d})$.
A hypoelliptic operator is a pseudodifferential operator whose symbol is hypoelliptic.

Depending on the asymptotic behavior of the symbol at infinity,
hypoelliptic symbols exhibit qualitatively different spectral properties.
We therefore distinguish between symbols that grow at infinity and symbols
that decay at infinity.

\medskip

\noindent
\textbf{Positive-order hypoelliptic symbols.}
A hypoelliptic symbol $\sigma$ is said to be of positive order if it belongs
to the class
\[
\hypoposclass(\R^{2d})
\;\coloneqq\;
\bigcup_{\rho\in(0,1]}
\ \bigcup_{\substack{m_+,m_->0\\ m_+\,\ge\, m_-}}
\mathrm{H}\Gamma^{m_-,m_+}_\rho(\R^{2d}).
\]
That is, $\sigma$ is of positive order if there exist parameters $\rho\in(0,1]$
and real numbers $m_+\ge m_->0$ such that
\[
\sigma \in \mathrm{H}\Gamma^{m_-,m_+}_\rho(\R^{2d}).
\]
For symbols in this class, we have
\[
|\sigma(z)| \to \infty,
\qquad \text{as } \|z\|_2 \to \infty.
\]


\noindent 
\textbf{Negative-order hypoelliptic symbols.}
A hypoelliptic symbol $\sigma$ is said to be of negative order if it belongs
to the class
\begin{equation}\label{eq:negative-order-def-class}
\hyponegclass(\R^{2d})
\;\coloneqq\;
\bigcup_{\rho\in(0,1]}
\ \bigcup_{\substack{m_+,m_->0\\ m_+\,\ge\, m_-}}
\mathrm{H}\Gamma^{-m_+,-m_-}_\rho(\R^{2d}).
\end{equation}
That is, $\sigma$ is of negative order if there exist parameters $\rho\in(0,1]$
and real numbers $m_+\ge m_->0$ such that
\[
\sigma \in \mathrm{H}\Gamma^{-m_+,-m_-}_\rho(\R^{2d}).
\]
For symbols in this class, we have
\[
|\sigma(z)| \to 0,
\qquad \text{as } \|z\|_2 \to \infty.
\]



The classical spectral theory of hypoelliptic pseudodifferential operators
predominantly addresses the 
positive-order case. 
This stands in contrast to the negative-order setting of the present paper, where the symbols of
interest decay at infinity, so that the volume integral in
\eqref{eq:definition-volume-function} is finite.
Nevertheless, in order to place our results in context and to recall the
classical Weyl-type asymptotics available in the 
positive-order case, it is
convenient to introduce counterparts of the eigenvalue-counting function
\eqref{eq:definition-ecf} and of the volume function
\eqref{eq:definition-volume-function}, which we denote by a superscript
$\uparrow$. 
Specifically, for a Weyl symbol $\sigma_W\in\hypoposclass(\R^{2d})$, we define the
upward eigenvalue-counting function of the pseudodifferential operator
$T_{\sigma_W}$ by
\[
    \ecfup_{\sigma_W}(\lambda)
    \;\coloneqq\;
    \#\bigl\{n\in\N^* : \lambda_n(T_{\sigma_W}) < \lambda\bigr\},
    \qquad \lambda>0,
\]
where $\{\lambda_n(T_{\sigma_W})\}_{n\in\N^*}$
denotes the eigenvalues
of $T_{\sigma_W}$.
It follows from \cite[Corollary~23.3 and Theorem~26.3]{shubin_pseudodifferential_2001}
that $T_{\sigma_W}$ has purely discrete spectrum.
The upward volume associated with the same symbol $\sigma_W$ is defined by
\begin{equation}\label{eq:definition-upward-volume}
    \volup_{\sigma_W}(\lambda)
    \;\coloneqq\;
    \int_{\R^{2d}} \mathbbm{1}_{\{\sigma_W(x,\omega) \, < \, \lambda\}}
    \,\diff x\,\diff\omega,
    \qquad \lambda>0.
\end{equation}

With these notions in place, a classical Weyl-type result in the spectral theory
of hypoelliptic pseudodifferential operators can be stated as follows.


\begin{theorem}[{\cite[Theorem~30.1]{shubin_pseudodifferential_2001}}]
\label{thm-shubin}
Let $\sigma\in\hypoposclass(\R^{2d})$.
Assume that there exist constants $R>0$ and $C>0$ such that
\begin{equation}\label{eq:assumption-shubin}
    |z\cdot\nabla\sigma(z)| \ge C\,|\sigma(z)|,
    \qquad \text{for all } z\in\R^{2d}\text{ with }\|z\|_2\ge R.
\end{equation}
Then the upward eigenvalue-counting function $\ecfup_\sigma$ of the
corresponding Weyl-quantized operator satisfies
\[
    \ecfup_\sigma(\lambda)\sim \volup_\sigma(\lambda),
    \qquad \text{as } \lambda\to\infty.
\]
\end{theorem}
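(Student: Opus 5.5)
The plan is to prove Theorem~\ref{thm-shubin} by the method of approximate spectral projections combined with the Glazman (min--max) lemma. The hypothesis~\eqref{eq:assumption-shubin} will be used only to show that the volume function $\volup_\sigma$ varies slowly. The spectral and trace estimates will use only hypoellipticity. As in the discussion following Theorem~\ref{thm:main-result}, a real hypoelliptic symbol of positive order has a fixed sign outside a large ball. Since $\ecfup_\sigma$ is only meaningful when the spectrum accumulates at $+\infty$, I will assume $\sigma>0$ for $\|z\|_2\ge R$. Modifying $\sigma$ on a compact set changes the operator by an element of $\Psi^{-\infty}$ and $\volup_\sigma$ by $O(1)$. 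Both changes are negligible, so I may also assume $\sigma\ge 1$ everywhere and that~\eqref{eq:assumption-shubin} holds with $z\cdot\nabla\sigma\ge C\sigma$ for $\|z\|_2\ge R$.

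\textbf{Step 1 (slow variation of the volume).} Along rays, $\frac{d}{dt}\ln\sigma(tz)=\frac{tz\cdot\nabla\sigma(tz)}{t\,\sigma(tz)}\ge C/t$ once $\|tz\|_2\ge R$. Integrating gives $\sigma(e^{s}z)\ge e^{Cs}\sigma(z)$ for $s\ge0$ and $\|z\|_2\ge R$. Hence $e^{-s}\{\sigma<\lambda e^{Cs}\}\subset\{\sigma<\lambda\}\cup B_R$, and therefore
\[
\volup_\sigma(\lambda e^{Cs})\le e^{2ds}\bigl(\volup_\sigma(\lambda)+O(1)\bigr).
\]
Consequently $\limsup_{\lambda\to\infty}\volup_\sigma((1+\delta)\lambda)/\volup_\sigma(\lambda)\to1$ as $\delta\to0$. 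The thin shell $\{\lambda\le\sigma<(1+\delta)\lambda\}$ therefore has volume $\le\eta(\delta)\volup_\sigma(\lambda)$ with $\eta(\delta)\to0$.

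\textbf{Step 2 (approximate projections).} Fix $\chi\in C^\infty(\R)$ with $\chi=1$ on $(-\infty,1]$ and $\chi=0$ on $[1+\delta,\infty)$. Let $E_\lambda$ be the Weyl quantization of $e_\lambda=\chi(\sigma/\lambda)$. By hypoellipticity, $\partial^\alpha e_\lambda$ is bounded by $C_\alpha(1+\|z\|_2^2)^{-\rho|\alpha|/2}$ uniformly in $\lambda$, and is supported in $\{\sigma\le(1+\delta)\lambda\}$ when $\alpha\ne0$. I will use the Weyl composition formula, as in the paper's own parametrix computations. It gives the symbols of $E_\lambda^2-E_\lambda$, $E_\lambda^*-E_\lambda$ and $E_\lambda(T_\sigma-(1+\delta)\lambda)E_\lambda$. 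Each is $\chi^2-\chi$, or a nonpositive leading term, plus remainders whose symbols are bounded by $(1+\|z\|_2^2)^{-\rho}$ times a rapidly decaying function outside $\{\sigma\le C\lambda\}$. The trace of $E_\lambda$ equals $\int e_\lambda\,dz$, which lies between $\volup_\sigma(\lambda)$ and $\volup_\sigma((1+\delta)\lambda)$.

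\textbf{Step 3 (trace-norm control, the main obstacle).} I need the trace norms of all remainders to be $o(\volup_\sigma(\lambda))$. I would bound trace norms by $L^1$-type norms of finitely many symbol derivatives, via the standard estimate $\|T_a\|_1\lesssim\sum_{|\alpha|\le 2d+1}\|\partial^\alpha a\|_{L^1}$. That reduces the task to showing
\[
\int_{\{\sigma\le C\lambda\}}(1+\|z\|_2^2)^{-\rho}\,dz=o\bigl(\volup_\sigma(\lambda)\bigr).
\]
For this I would split at radius $r$: the ball contributes $O(r^{2d})$, and the exterior contributes at most $(1+r^2)^{-\rho}\volup_\sigma(C\lambda)$. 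Step~1 gives $\volup_\sigma(C\lambda)\lesssim\volup_\sigma(\lambda)$, and $\volup_\sigma(\lambda)\to\infty$. Letting $\lambda\to\infty$ and then $r\to\infty$ finishes this step. Handling the off-diagonal tails of the composition remainders carefully is the technical core.

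\textbf{Step 4 (min--max).} The self-adjoint part of $E_\lambda$ has at least $\operatorname{tr}E_\lambda-o(\volup_\sigma)$ eigenvalues near $1$. On their span the quadratic form of $T_\sigma$ is $\le(1+2\delta)\lambda$, up to errors controlled by Step~3. The Glazman lemma then gives $\ecfup_\sigma((1+2\delta)\lambda)\ge(1-o(1))\volup_\sigma(\lambda)$. For the upper bound, the complementary projection $I-E_{\lambda'}$ at a slightly larger level satisfies $T_\sigma\ge\lambda(I-E_{\lambda'})$ up to small trace errors. Hence the spectral subspace below $\lambda$ has dimension at most $\operatorname{tr}E_{\lambda'}+o(\volup_\sigma)$. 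Combining the two bounds with Step~1 and letting $\delta\to0$ yields $\ecfup_\sigma(\lambda)\sim\volup_\sigma(\lambda)$.
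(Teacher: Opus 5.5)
The paper gives no proof of this statement; it quotes Shubin, whose argument is the Tulovskii--Shubin approximate-spectral-projection method. Your outline has that architecture: the cutoff symbol $\chi(\sigma/\lambda)$, trace norms via $L^1$ norms of derivatives, Glazman's lemma, and slow variation of $\volup_\sigma$ derived from the radial condition. Step~1 is correct as written; the sign $z\cdot\nabla\sigma>0$ that you assume follows from connectedness of $\{\|z\|_2\ge R\}$ together with $\sigma\to\infty$.

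The gap is in Step~4, where you turn signs of symbols into quadratic-form inequalities. The leading part of $E_\lambda\bigl(T_\sigma-(1+\delta)\lambda\bigr)E_\lambda$ is the Weyl quantization of $b_\lambda=e_\lambda^2\bigl(\sigma-(1+\delta)\lambda\bigr)\le 0$. Weyl quantization does not preserve sign: $(\mathrm{Op}^W(b)u,u)=\int b\,W_u\,dz$, and Wigner functions $W_u$ take negative values. So the composition calculus plus trace-norm control of the remainders does not give $(T_\sigma u,u)\le(1+2\delta)\lambda\|u\|^2$ on your span. You need a positivity-preserving device, for example anti-Wick quantization. Write $\mathrm{Op}^W(b_\lambda)=\mathrm{Op}^{\mathrm{AW}}(b_\lambda)+\mathrm{Op}^W(b_\lambda-b_\lambda*G)$ with $G$ a Gaussian. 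Then $\mathrm{Op}^{\mathrm{AW}}(b_\lambda)\le 0$. The difference $b_\lambda-b_\lambda*G$ is controlled by second derivatives of $b_\lambda$, so it is $O(\lambda\langle z\rangle^{-2\rho})$ near $\{\sigma\le(1+\delta)\lambda\}$, with Gaussian tails. Its trace norm is therefore $o(\lambda\volup_\sigma(\lambda))$ by exactly your Step~3 estimate.

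The upper bound needs more than this, because the relevant symbol $\sigma-\lambda''(1-e_{\lambda'})$ lives on all of $\R^{2d}$. The errors incurred in passing from the symbol inequality to an operator inequality are then of order $\sigma\langle z\rangle^{-2\rho}$ on an unbounded region. They are not small in trace norm, and in general not trace class at all, so ``$T_\sigma\ge\lambda(I-E_{\lambda'})$ up to small trace errors'' cannot come out of Step~3.

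What does hold is the following. For $\lambda<\lambda''<\lambda'$, this symbol is $\ge c\,\sigma$ and satisfies $|\partial^\alpha(\cdot)|\le C_\alpha\sigma\langle z\rangle^{-\rho|\alpha|}$ uniformly in $\lambda$. A G\aa rding (or anti-Wick) argument with $\lambda$-independent constants then gives $T_\sigma\ge\lambda''\bigl(I-E_{\lambda'}-G\bigr)$ with $\|G\|_1=o(\volup_\sigma(\lambda))$. Here $G$ collects a trace-small anti-Wick correction and $C\Pi_0/\lambda''$, where $\Pi_0$ is a fixed finite-rank spectral projection of $T_\sigma$.

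The margin $\lambda''>\lambda$ is essential. Without it you only learn that $E_{\lambda'}+G$ is positive on the spectral subspace below $\lambda$, and a trace-class $E_{\lambda'}$ can have infinitely many small positive eigenvalues. With it, you get $((E_{\lambda'}+G)u,u)>(1-\lambda/\lambda'')\|u\|^2$ on that subspace. You then count eigenvalues of $E_{\lambda'}+G$ above the fixed threshold $\theta=1-\lambda/\lambda''$, and that count is at most $\mathrm{tr}\,E_{\lambda'}+C_\theta\bigl(\|E_{\lambda'}^2-E_{\lambda'}\|_1+\|G\|_1\bigr)$.

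A minor point: because of the $\chi^2-\chi$ term, $\|E_\lambda^2-E_\lambda\|_1$ is $O(\eta(\delta)\volup_\sigma(\lambda))+o(\volup_\sigma(\lambda))$, not $o(\volup_\sigma(\lambda))$. This is harmless since you send $\delta\to0$ last.
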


\noindent
Note that our definition of the upward volume in
\eqref{eq:definition-upward-volume} differs from that used in
\cite[Theorem~30.1]{shubin_pseudodifferential_2001} by a multiplicative factor of
$(2\pi)^d$.
This discrepancy reflects the different normalization conventions for the
Fourier transform; see \eqref{eq:definition-fourier-transform} and
\cite[eq.~(1.2)]{shubin_pseudodifferential_2001}.

The second result we invoke is due to Dauge and Robert \cite{dauge1987weyl}.
Their work concerns compact pseudodifferential operators of negative order and
establishes a Weyl-type asymptotic formula for the eigenvalue-counting function,
including a remainder estimate. Earlier results in this direction are due to Birman and Solomyak
\cite{birman_solomyak_1980_anisotropic}, who analyzed compact pseudodifferential
operators with symbols that are homogeneous or quasi-homogeneous with respect to
the phase variable.
A typical example of such behavior is given by symbols $\sigma$ for which there
exists $\zeta<0$ such that
\[
\sigma(x,t\omega)=t^{\zeta}\sigma(x,\omega),
\]
for all $t\ge1$, $x\in\R^d$, and $\omega\in\R^d$ with $\|\omega\|_2$ sufficiently
large. Dauge and Robert relax the requirement of exact homogeneity and work under more
general decay assumptions at infinity, thereby extending Weyl-type spectral
asymptotics to a broader class of compact pseudodifferential operators. This
flexibility makes their result applicable to the negative-order symbol regime
considered in the present paper.

Dauge--Robert formulate their results in terms of general weight functions
\(\phi,\varphi\in C^\infty(\R^{2d})\) taking values in \(\R_+^*\); see
\cite[Definition~1.1]{dauge1987weyl}.
In this framework, a function \(w\in C^\infty(\R^{2d})\) with values in \(\R_+^*\)
is called \((\phi,\varphi)\)-continuous if there exist constants \(c,C>0\) such that
\[
    C^{-1}
    \le
    \frac{w(x+x',\omega+\omega')}{w(x,\omega)}
    \le
    C
\]
for all \(x,x',\omega,\omega'\in\R^d\) satisfying
\[
    \frac{\|x'\|_2}{\varphi(x,\omega)}
    +
    \frac{\|\omega'\|_2}{\phi(x,\omega)}
    \le c .
\]
The weight \(w\) is said to be \((\phi,\varphi)\)-temperate if there exist constants
\(C,\zeta>0\) so that
\begin{equation}\label{eq:def-phi-phi-temperate}
    w(x+x',\omega+\omega')
    \le
    C\, w(x,\omega)
    \bigl[1+\|x'\|_2\,\phi(x,\omega)
          +\|\omega'\|_2\,\varphi(x,\omega)\bigr]^{\zeta},
\end{equation}
for all \(x,x',\omega,\omega'\in\R^d\).

Further, given a \((\phi,\varphi)\)-temperate weight \(w\), Dauge--Robert define the symbol
class \(\Gamma(\R^{2d};w,\phi,\varphi)\) as the set of real-valued functions
\(\sigma\in C^\infty(\R^{2d})\) such that, for every pair of multi-indices
\(\alpha,\beta\in\N^{d}\), one has
\begin{equation}\label{eq:weight-class-DR}
    \bigl|\partial_x^\alpha \, \partial_\omega^\beta \, \sigma(x,\omega)\bigr|
    \;\leq C_{\alpha,\beta}
    \frac{w(x,\omega)}
         {\phi(x,\omega)^{|\alpha|}\,\varphi(x,\omega)^{|\beta|}},
    \qquad x,\omega\in\R^d,
\end{equation}
with a constant $C_{\alpha,\beta} > 0$.

In order to control the spectral asymptotics of compact pseudodifferential
operators with symbol decay, Dauge--Robert impose a set of structural conditions
on the weight functions \(w,\phi,\varphi\) governing localization and scaling in
phase space. For ease of reference, we reproduce these assumptions below, using
the same labels as in \cite{dauge1987weyl}.

\begin{citemize}
    \item[(H1)]
    The functions \(\phi^{-1}\) and \(\varphi^{-1}\) are
    \((\phi,\varphi)\)-continuous, \((1,1)\)-temperate, and bounded on
    \(\R^{2d}\).

    \item[(H2)]
    There exist constants \(C,\zeta>0\) such that
    \begin{equation}\label{eq:DR-H2}
        C\,(1+\|x\|_2+\|\omega\|_2)^{\zeta}
        \le \phi(x,\omega)\,\varphi(x,\omega),
        \qquad x,\omega\in\R^{d}.
    \end{equation}

    \item[(W)]
    The weight \(w\) belongs to the symbol class
    \(\Gamma(\R^{2d};w,\phi,\varphi)\) and is
    \((\phi,\varphi)\)-temperate.

    \item[(N)]
    There exist constants \(K,K',\gamma,\gamma'>0\) such that
    \begin{equation}\label{eq:DR-N}
        K'\,w(x,\omega)^{\gamma'}
        \le \phi^{-1}(x,\omega)\,\varphi^{-1}(x,\omega)
        \le K\,w(x,\omega)^{\gamma},
        \qquad x,\omega\in\R^{d}.
    \end{equation}
\end{citemize}

Under assumptions \emph{(H1)}–\emph{(N)}, Dauge–Robert establish the following Weyl-type
asymptotic formula for the eigenvalue-counting function of compact
pseudodifferential operators whose symbol
$\sigma$ belongs to $\Gamma(\R^{2d}; w, \phi, \varphi)$.


\begin{theorem}[{\cite[Theorem~1.3]{dauge1987weyl}}]\label{thm:Dauge-Robert}
    Let $w, \phi, \varphi \in C^\infty(\R^{2d})$
    be weight functions satisfying (H1), (H2), (W), and (N).
    Let $\sigma\in\Gamma(\R^{2d}; w, \phi, \varphi)$ 
    be non-negative
    and assume that the maps $\lambda \mapsto V_\sigma(\lambda)$ 
    and $\lambda \mapsto V_w(\lambda)$ are regularly varying at zero with $V_w (\lambda) = O_{\lambda\to 0} \left(V_\sigma (\lambda)\right)$.
    Then, $M_{|T_\sigma|} (\lambda) \sim M_{T_\sigma} (\lambda) \sim V_{\sigma} (\lambda)$, as $\lambda\to 0$, where $T_\sigma$ is the Weyl quantization of $\sigma$.
\end{theorem}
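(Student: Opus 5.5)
The plan is to prove the Weyl law by an approximate functional calculus in the weighted Weyl calculus determined by $(w,\phi,\varphi)$, followed by a Tauberian-type sandwich argument. Regular variation of $V_\sigma$ is what lets the sandwich close.

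\emph{Step 1 (calculus).} Conditions \emph{(H1)}–\emph{(H2)} make $\Gamma(\R^{2d};\cdot,\phi,\varphi)$ a Hörmander-type metric calculus with the following properties:
\begin{itemize}
\item the Planck function $h=\phi^{-1}\varphi^{-1}$ is bounded and tends to $0$ at infinity;
\item compositions of operators in the class have asymptotic expansions in powers of $h$;
\item operators with symbols in $\Gamma(\R^{2d};w,\phi,\varphi)$, with $w$ bounded, are bounded on $L^2(\R^d)$.
\end{itemize}
By \emph{(N)}, $h\lesssim w^{\gamma}$, so every remainder gains a positive power of the weight $w$. Under the decay assumptions this makes remainders compact and gives them explicitly estimable trace-class norms. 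In particular $T_\sigma$ is compact.

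\emph{Step 2 (trace formula for smooth functions of $T_\sigma$).} Fix $f\in C_c^\infty((0,\infty))$. I would build $f(T_\sigma)$ through the Helffer–Sjöstrand formula
\begin{equation*}
f(T_\sigma)=\frac{1}{\pi}\int_{\mathbb C}\bar\partial\tilde f(\zeta)\,(\zeta-T_\sigma)^{-1}\,dL(\zeta),
\end{equation*}
where $\tilde f$ is an almost analytic extension of $f$. For $\operatorname{Im}\zeta\neq0$, I would construct a parametrix of $(\zeta-T_\sigma)^{-1}$ within the calculus, with leading symbol $(\zeta-\sigma)^{-1}$, and control its seminorms polynomially in $|\operatorname{Im}\zeta|^{-1}$. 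Rescaling $f$ to $f(\cdot/\lambda)$, the aim is
\begin{equation*}
\operatorname{tr} f(T_\sigma/\lambda)=\int_{\R^{2d}} f\bigl(\sigma(z)/\lambda\bigr)\,dz+O\Bigl(\int_{\{w\gtrsim\lambda\}} h(z)^{\delta}\,dz\Bigr),
\end{equation*}
for some $\delta>0$.

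\emph{Step 3 (smallness of the remainder).} The remainder region $\{w\gtrsim\lambda\}$ has volume $V_w(c\lambda)$. By hypothesis $V_w=O(V_\sigma)$, and $V_\sigma$ is regularly varying, so $V_w(c\lambda)=O(V_\sigma(\lambda))$. The factor $h^\delta$ tends to $0$ at infinity, so the remainder is $o(V_\sigma(\lambda))$; to see this, split the integral into a large ball and its complement.

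\emph{Step 4 (from smooth $f$ to indicators).} Choose $0\le f_-\le\mathbbm 1_{[1,\infty)}\le f_+$ with $f_\pm=1$ on $[1\pm\eta,\,\sup\sigma/\lambda+1]$, suitably cut off near $0$. Then
\begin{equation*}
\operatorname{tr} f_-(T_\sigma/\lambda)\le M_{T_\sigma}(\lambda)\le\operatorname{tr} f_+(T_\sigma/\lambda).
\end{equation*}
The main terms are sandwiched between $V_\sigma((1+\eta)\lambda)$ and $V_\sigma((1-\eta)\lambda)$. Regular variation gives
\begin{equation*}
\frac{V_\sigma((1\pm\eta)\lambda)}{V_\sigma(\lambda)}\to(1\pm\eta)^{-\alpha},
\end{equation*}
and letting $\eta\to0$ yields $M_{T_\sigma}(\lambda)\sim V_\sigma(\lambda)$.

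\emph{Step 5 (passing to $|T_\sigma|$).} Since $\sigma\ge0$, the negative part of $T_\sigma$ comes only from lower-order terms. Apply the same argument to $-T_\sigma$, whose symbol $-\sigma$ is nonpositive, so the main term $\int f(-\sigma/\lambda)\,dz$ vanishes. This shows the number of eigenvalues $\le-\lambda$ is $o(V_\sigma(\lambda))$, hence $M_{|T_\sigma|}\sim M_{T_\sigma}$.

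The main obstacle is Step 2: obtaining resolvent parametrices, uniform in $\zeta$ near the real axis, inside a weighted calculus where $w$ both decays and serves as the spectral scale. The rescaling by $\lambda$ must not destroy the symbol estimates. I would handle this as Dauge–Robert do, by treating $\lambda^{-1}w$ as the working weight in the region $\{w\gtrsim\lambda\}$ and using \emph{(N)} to convert powers of $h$ into powers of $w/\lambda$.
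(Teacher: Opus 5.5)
The paper does not prove this statement. It imports it from Dauge--Robert, with regular variation replacing their condition (T), so your outline has to stand on its own.

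Its architecture is the right one: a smoothed trace formula via Helffer--Sj\"ostrand, followed by a regular-variation sandwich. It goes through when $\sigma$ is elliptic with respect to $w$, i.e.\ $\sigma\gtrsim w$, which includes the only case the paper actually uses, $w=\sigma$. In that case $\sigma\sim\lambda$ forces $w\sim\lambda$ on the support of $f^{(k)}(\sigma/\lambda)$, and (N) turns $h$ into $O(\lambda^{\gamma})$ there.

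\textbf{The gap.} The statement allows an arbitrary nonnegative $\sigma\in\Gamma(\R^{2d};w,\phi,\varphi)$. Such a symbol satisfies only \emph{upper} bounds and may vanish, e.g.\ on a cone.
\begin{itemize}
\item At points of the shell $\{\sigma\sim\lambda\}$ where $w\gg\lambda$, all you know is $|\partial_x^\alpha\partial_\omega^\beta(\sigma/\lambda)|\lesssim (w/\lambda)\,\phi^{-|\alpha|}\varphi^{-|\beta|}$.
\item Hence $(\zeta-\sigma/\lambda)^{-1}$ lies in no $\lambda$-uniform symbol class, and the order-$j$ terms of your parametrix/trace expansion are only bounded there by quantities like $(w/\lambda)^{2j}h^{j}$.
\item Your proposed remedy does not fix this. (N) bounds $h$ by a power of $w$, not by a power of $\lambda/w$. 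Using $w/\lambda$ as the working weight requires $|\zeta-\sigma/\lambda|\gtrsim w/\lambda$, which is ellipticity again.
\end{itemize}
Here is a concrete instance satisfying every hypothesis: $d=1$, $w=\langle z\rangle^{-2}$, $\phi=\varphi=\langle z\rangle$, and $\sigma=g(\theta)\,w$ away from the origin, with $g\ge0$ and $g(\theta)=\theta^2$ near $\theta=0$, so that $V_\sigma\asymp V_w\asymp\lambda^{-1}$. For $|z|\sim1$ the shell has angular width $\sim\sqrt{\lambda}$ while $h\sim1$. So $\sigma/\lambda$ varies by $O(1)$ far below the uncertainty scale, and the calculus cannot produce a uniform remainder of the form you claim in Step~2. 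Step~5, applied to $-\sigma$, has the same defect.

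\textbf{The missing idea: reduce to the elliptic case.} This is exactly where the hypotheses on $V_w$ enter; note that your outline never uses the regular variation of $V_w$.
\begin{itemize}
\item Fix $\mu>0$ and set $\sigma_\mu=\sigma+\mu w$, so $\mu w\le\sigma_\mu\le Cw$. Your Steps 1--4 then give one-sided smoothed bounds for $T_{\sigma_\mu}$ and for $T_w$, with $o(V_w(\lambda))$ errors whose constants depend on $\mu$. No regular variation of $V_{\sigma_\mu}$ is needed.
\item Since $T_\sigma=T_{\sigma_\mu}-\mu T_w$ with all operators compact and self-adjoint, Weyl's inequality $M_{A+B}(s+t)\le M_A(s)+M_B(t)$ gives $M_{T_\sigma}((1-\eta)\lambda)\ge M_{T_{\sigma_\mu}}(\lambda)-M_{T_w}(\eta\lambda/\mu)$.
\item It also gives $M_{T_\sigma}(\lambda)\le M_{T_{\sigma_\mu}}((1-\eta)\lambda)+\#\{\text{eigenvalues of } T_w \text{ below } -\eta\lambda/\mu\}$.
\item Combine these with $V_\sigma\le V_{\sigma_\mu}$ and $V_{\sigma_\mu}(\lambda)\le V_\sigma((1-\eta)\lambda)+V_w(\eta\lambda/\mu)$.
\item Use also $V_w(\eta\lambda/\mu)\sim(\mu/\eta)^{\alpha_w}V_w(\lambda)=O\bigl((\mu/\eta)^{\alpha_w}\bigr)V_\sigma(\lambda)$. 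Here $-\alpha_w$ is the index of $V_w$, which is negative because (H1) and (N) force $w$ to decay at most polynomially.
\item Letting $\lambda\to0$, then $\mu\to0$, then $\eta\to0$ yields $M_{T_\sigma}\sim V_\sigma$.
\end{itemize}
The same splitting, $-T_\sigma=-T_{\sigma_\mu}+\mu T_w$, shows that the number of eigenvalues of $T_\sigma$ below $-\lambda$ is $o(V_\sigma(\lambda))$, which is what Step~5 needs.
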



\noindent
We remark that both Theorems~\ref{thm-shubin} and~\ref{thm:Dauge-Robert} are stated
here in slightly simplified forms relative to their original counterparts in
\cite[Theorem~30.1]{shubin_pseudodifferential_2001} and
\cite[Theorem~1.3]{dauge1987weyl}, respectively.
More precisely, \cite[Theorem~30.1]{shubin_pseudodifferential_2001} replaces the
condition~\eqref{eq:assumption-shubin} by the weaker requirement
\begin{equation}\label{eq:assumption-shubin2}
    |z \cdot \nabla \sigma(z)|
    \ge
    C\,|\sigma(z)|^{1-\delta},
    \qquad
    \text{for all } z \in \R^{2d} \text{ with } \|z\|_2 \ge R,
\end{equation}
for some $\delta \in [0,\rho')$, where $\rho'>0$ is a parameter depending on the
symbol $\sigma$ (with the universal lower bound $\rho' \ge \rho/m_+$).
Under this assumption, Shubin establishes the refined asymptotic expansion
\[
    \ecfup_\sigma(\lambda)
    =
    \volup_\sigma(\lambda)
    \left(
        1
        +
        O_{\lambda\to\infty}
        \bigl(\lambda^{\delta-\rho'+\eta}\bigr)
    \right),
    \qquad
    \text{for every } \eta>0.
\]
Similarly, \cite[Theorem~1.3]{dauge1987weyl} provides a remainder estimate of the
form
\[
    M_\sigma(\lambda)
    =
    V_\sigma(\lambda)
    +
    O_{\lambda\to0}
    \left(
        \lambda^{\zeta}\,V_w(\lambda)
    \right),
    \qquad
    \text{for some } \zeta>0,
\]
under their original set of assumptions. 
In the present paper, only first-order asymptotics are required, and the finer
remainder estimates available in the original results are not needed.
For this reason, the simplified formulations given in
Theorems~\ref{thm-shubin} and~\ref{thm:Dauge-Robert} are sufficient for our
purposes. 
Additionally, we note that the regular-variation assumption imposed in
Theorem~\ref{thm:Dauge-Robert} is stronger than the original condition~(T) in
\cite{dauge1987weyl}. This strengthening is introduced here in order to enable the
subsequent connection between eigenvalue-counting functions and metric entropy,
and therefore does not entail any loss of generality in the context of our
results. Finally, the non-negativity assumption on the symbol $\sigma$ can in principle be
removed. In \cite{dauge1987weyl}, this is achieved by separating the asymptotic
analysis of the positive and negative parts of the spectrum of $T_\sigma$. 
When
$\sigma$ is strictly positive and of negative order, the negative spectrum 
does not contribute to the leading asymptotics,
and $M_{|T_\sigma|} (\lambda) \sim M_{T_\sigma} (\lambda)$. Since our analysis
focuses exclusively on the leading-order behavior, the formulation of
Theorem~\ref{thm:Dauge-Robert} for non-negative symbols is compatible with the
assumptions of Theorem~\ref{thm:main-result}.

\end{document}